\newcommand{\TT}{T{\footnotesize H\'EOR\`EME} }
\newcommand{\PP}{P{\footnotesize ROPOSITION} }
\newcommand{\CC}{C{\footnotesize OROLLAIRE} }
\newcommand{\DD}{D{\footnotesize \'EFINITION} }
\newcommand{\LL}{L{\footnotesize EMME} }
\newcommand{\EE}{E{xemple} }
\title
[Sur la linéarisation des tissus]
{Sur la linéarisation des tissus}
\author{\firstname{Luc}  \lastname{Pirio}}
\address{IRMAR, UMR 6625 du CNRS\\
Université Rennes 1\\
 Campus de Beaulieu\\
35000 Rennes.}
\email{E-mail: luc.pirio@univ-rennes1.fr}
\keywords{tissu, linéarisation, connexion projective, courbure}
\subjclass{53A60, 53B10.}
\begin{document}
\begin{abstract}
Nous donnons un critère analytique simple qui caractérise les tissus en hypersurfaces  linéarisables. On en donne  une interprétation géométrique invariante en terme de connexion projective. Nous expliquons que notre approche permet de traiter le problème de la linéarisation  d'objets plus généraux que les tissus de codimension 1. Quelques exemples particuliers sont étudiés ensuite en guise d'illustration. Nous concluons finalement par des remarques historiques sur la question abordée ici.
\end{abstract}

\begin{altabstract}
 We give a simple analytic criterion which characterizes linearizable 1-codimensional webs. Then we give an invariant geometrical interpretation of it, in term of projective connection. We explain then how our approach allows to study linearization of more general objects than 1-codimensional webs. By way of illustration, we treat some explicit interesting examples. 
We finish by historic remarks on the question approached here.
\vspace{-0.5cm}

\end{altabstract}

\maketitle


\section{Introduction}
On peut faire remonter 
la question de la linéarisation des tissus  au problème de l'anamorphose en nomographie. 
 Cette problématique  date donc de  plus d'un siècle. La géométrie des tissus s'est constituée comme discipline autonome à Hambourg vers la fin des années 1920: Blaschke et ses collaborateurs 
 ont alors établi un nombre important de résultats, qui ont constitué un cadre solide sur lequel s'est b\^atie la théorie par la suite. Le problème de savoir si un tissu donné est linéarisable ou pas a  bien sûr retenu l'attention de ces géomètres qui en ont donné une solution satisfaisante en dimension 2. 
 Cette question a été reprise récemment dans ce même cadre par différents auteurs, de façon indépendante semble-t-il.  
\`A notre connaissance, il n'y a pas de résultat général concernant la linéarisation des tissus qui ne sont pas plans.\footnote{Nous renvoyons le lecteur à la Section  \ref{S:histoire} pour une discussion sur ce point.}
\smallskip

Dans cet article, nous donnons une solution complète au problème de caractériser les $d$-tissus de codimension 1 linéarisables en  dimension $n$ arbitraire, lorsque $d$ est plus grand que $ n+2$. Il nous semble que   l'approche développée ici  peut se généraliser sans véritable difficulté à la caractérisation des tissus en toute dimension et/ou codimension. La difficulté qu'il y a à savoir si un tissu est linéarisable ou pas provient du fait qu'un tel objet  est la donnée d'un ensemble discret de feuilletages, alors que les méthodes  classiques permettant d'étudier l'équivalence analytique (locale) d'objets géométriques sont du ressort de la géométrie différentielle et  traitent donc  des structures géométriques continues. Il se trouve que cette difficulté disparaît si l'on ``interpole'' \underline{tous} les feuilletages d'un tissu donné par \underline{un} même système différentiel d'ordre 2, puisqu'il est immédiat qu' 
\begin{quote}
{\it un tissu est linéarisable si et seulement si les feuilles des différents feuilletages qui le constituent sont des variétés intégrales d'un même système différentiel du second ordre (avec $1\leq \alpha \leq \beta <n$)
\begin{equation}
 \label{E:1}
 \frac{\partial^2 x^n}{\partial x^\alpha \partial x^\beta}= 
F_{\alpha \beta}(x) 
\end{equation}
linéarisable, c'est-à-dire équivalent au système {\og{plat}\fg}
\begin{equation}
 \label{E:2}
 \frac{\partial^2 \overline{x}^n}{\partial \overline{x}^\alpha \partial \overline{x}^\beta}= 0  \quad  \quad 
\end{equation}
via un changement de coordonnées ponctuelles $x=(x^1,\ldots,x^n)\longmapsto (\overline{x}^1(x),\ldots,\overline{x}^n(x))$. 
}
\end{quote}

C'est sur ce principe particulièrement simple que repose notre approche. En effet, la caractérisation des systèmes du second ordre  (\ref{E:1}) équivalents par changement de coordonnées ponctuelles au système plat (\ref{E:2}) est classique et bien comprise. Elle peut se formaliser en terme de connexion projective. L'utilisation alors de la notion d'espace généralisé introduite par \'E. Cartan permet d'obtenir facilement un critère caractérisant les connexions projectives intégrables, c'est-à-dire  (localement) équivalentes à la connexion projective plate.  

La question de linéariser un $d$-tissu $W_d$ se ramène alors à celle de trouver une connexion projective ({\it i.e.} un système de la forme (\ref{E:1}) sous une certaine forme normale) interpolant tous les feuilletages de $W_d$. Ce problème est  en fait très simple puisqu'équivalent à la résolution d'un système d'équations linéaires, système  qui se trouve être surdéterminé lorsque $d$ est suffisamment grand. 
\begin{center}
 $\Diamond$\vspace{0.1cm}
\end{center}

C'est en essayant d'interpréter avec un formalisme plus moderne le chapitre 27 du livre de référence \cite{bb} que nous avons compris que la bonne notion pour étudier la question de la linéarisabilité des tissus était la notion de connexion projective. Nous avons ensuite réalisé que l'approche de \cite{bb} se généralisait sans véritable difficulté au cas des tissus de codimension 1. Nous avons écrit cet article sans prétendre à une grande originalité mais en espérant faire un exposé clair sur une question naturelle en géométrie des tissus. 

\begin{center}
 $\Diamond$\vspace{0.1cm}
\end{center}

Indiquons maintenant  comment est organisée la suite de cet article. \smallskip

Dans la Section 2, on commence par énoncer le \TT \ref{T:main} que nous présentons comme notre résultat principal: il s'agit d'un critère caractérisant de façon invariante les $(n+2)$-tissus de codimension 1   linéarisables. On énonce aussi le \CC \ref{C:main} qui est une adaptation immédiate du résultat précédent au cas des $d$-tissus lorsque $d>n+2$. On introduit ensuite en \S2.2 les notions et  notations qui seront utilisées dans la suite. \medskip

L'outil essentiel dans cet article est la notion de connexion projective: on la présente sous différents aspects dans la Section \ref{S:connexionprojo} et l'on donne les ingrédients utilisés pour  caractériser les connexions projectives intégrables par l'annulation de leur courbure. Cette section ne comporte pas de résultat qui ne soit déjà   connu et le spécialiste n'y apprendra  rien de nouveau. Par contre, elle sera 
peut-être utile au   lecteur peu familier avec les connexions projectives, qui y trouvera une présentation que l'on espère assez complète et bien référencée. Dans tous les cas, cette section peut être sautée en première lecture. \medskip

Dans la Section \ref{S:codim1}, nous montrons que les feuilles d'un $(n+2)$-tissu $W$  de codimension 1 sont totalement géodésiques pour une certaine connexion projective $\Pi_W$ uniquement déterminée qui se trouve  donc être canoniquement associée à $W$. Combiné avec les résultats de la section précédente, on en déduit assez simplement une preuve de notre résultat principal.  \medskip

Le point de vue adopté dans cet article permet  d'obtenir dans certains cas des critères de linéarisabilité pour des objets  plus généraux que les tissus de codimension 1. C'est ce thème qui est discuté dans la Section \ref{S:gen}. 
En guise d'illustration, nous traitons plusieurs exemples différents. 
\medskip

Enfin, nous présentons dans la Section \ref{S:histoire} quelques remarques historiques sur  différents travaux antérieurs au présent article qui avaient abordé  la question de la linéarisabilité des tissus.


\section{\'Enoncé du résultat principal et notations}
Dans tout l'article, on se place dans un cadre holomorphe. Tous les objets considérés seront donc analytiques complexes. 
Nos résultats sont cependant valides dans un cadre réel, sous des hypothèses de régularité $C^k$ pour $k\geq 2$. Nous laissons le soin au lecteur intéressé d'adapter 
 et/ou de modifier les énoncés/preuves de cet article à d'autres situations.
\subsection{\'Enoncés du résultat principal et d'un corollaire}\vspace{-0.15cm}
Nous dirons qu'une connexion projective $\Pi$ et un tissu $W$ sont {\it compatibles} si les feuilles de $W$ sont toutes totalement géodésiques pour $\Pi$.
 Cette définition posée, notre résultat principal se formule de la façon~suivante:
\begin{thm}
\label{T:main}
 \'Etant donné un $(n+2)$-tissu  $W$ de codimension 1 sur un domaine $U\subset \mathbb C^n$, il existe une unique connexion projective $\Pi_W$ qui lui est compatible. 
De plus, les assertions suivantes sont équivalentes:
\begin{itemize}
 \item $W$ est linéarisable;
\item $\Pi_W$ est plate;
\item la courbure $\mathfrak{ C}(W) \in \Omega^1_U \otimes \mathfrak{ sl}_{n+1}(\mathbb C)$ de $\Pi_W$ est identiquement nulle.
\end{itemize}
\end{thm}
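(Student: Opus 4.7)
Le plan est d'\'etablir les trois assertions du th\'eor\`eme dans l'ordre suivant : d'abord l'existence et l'unicit\'e de $\Pi_W$, puis l'\'equivalence entre lin\'earisabilit\'e de $W$ et platitude de $\Pi_W$, et enfin l'\'equivalence entre platitude de $\Pi_W$ et nullit\'e de sa courbure $\mathfrak{C}(W)$.

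Pour la premi\`ere partie, je raisonnerai ponctuellement par alg\`ebre lin\'eaire. En un point fix\'e, l'espace des valeurs possibles d'une connexion projective, vue comme classe d'\'equivalence de symboles de Christoffel sym\'etriques $\Gamma^k_{ij}$ modulo la substitution projective $\Gamma^k_{ij} \mapsto \Gamma^k_{ij} + \delta^k_i\, \phi_j + \delta^k_j\, \phi_i$, est de dimension $n(n-1)(n+2)/2$. Or un feuilletage de codimension $1$ donn\'e localement par une 1-forme $\omega$ a ses feuilles totalement g\'eod\'esiques pour $\Pi$ si et seulement si, en coordonn\'ees adapt\'ees o\`u $\omega \propto dx^n$, les $n(n-1)/2$ composantes $\Gamma^n_{\alpha\beta}$ (avec $\alpha,\beta < n$) s'annulent---conditions qui sont effectivement invariantes par l'\'equivalence projective. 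Avec $n+2$ feuilletages, j'obtiens ainsi $(n+2)\cdot n(n-1)/2 = n(n-1)(n+2)/2$ conditions lin\'eaires par point, soit exactement le nombre de degr\'es de libert\'e. La construction de $\Pi_W$ se ram\`ene alors \`a inverser, en chaque point, un syst\`eme lin\'eaire carr\'e dont la non-d\'eg\'en\'erescence refl\`ete pr\'ecis\'ement l'hypoth\`ese de position g\'en\'erale des $n+2$ conormales caract\'erisant un tel tissu ; les coefficients de $\Pi_W$ d\'ependant analytiquement du point, on obtient bien une connexion projective sur $U$.

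Pour la deuxi\`eme \'equivalence, j'utiliserai l'unicit\'e d\'ej\`a \'etablie. Si $\phi$ lin\'earise $W$, les feuilles de $\phi_*W$ sont des morceaux d'hyperplans, totalement g\'eod\'esiques pour la connexion projective plate $\Pi_{\mathrm{plat}}$ ; l'unicit\'e force $\phi_*\Pi_W = \Pi_{\mathrm{plat}}$, donc $\Pi_W$ est plate. R\'eciproquement, si $\Pi_W$ est localement \'equivalente \`a $\Pi_{\mathrm{plat}}$, un changement de coordonn\'ees la redresse en la connexion plate, et les feuilles de $W$, toujours totalement g\'eod\'esiques, deviennent des morceaux d'hyperplans, ce qui lin\'earise $W$. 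Quant \`a l'\'equivalence entre platitude de $\Pi_W$ et nullit\'e de $\mathfrak{C}(W)$, c'est le crit\`ere classique d'int\'egrabilit\'e des connexions projectives, rappel\'e dans la Section~\ref{S:connexionprojo}, auquel il suffira de renvoyer.

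L'obstacle principal me semble \^etre la v\'erification, dans la premi\`ere \'etape, que les $n+2$ blocs de $n(n-1)/2$ conditions lin\'eaires issus des feuilletages sont effectivement ind\'ependants. Ce point, de nature alg\'ebrique, passe par une analyse explicite des 1-formes $\omega_1,\dots,\omega_{n+2}$ d\'efinissant les feuilletages et de la fa\c con dont elles d\'eterminent les composantes normales des symboles de Christoffel ; c'est l\`a qu'appara\^it naturellement la valeur $d=n+2$ comme seuil critique : en de\c c\`a le syst\`eme serait sous-d\'etermin\'e, au-del\`a il devient surd\'etermin\'e (ce qui fournira d'ailleurs le \CC \ref{C:main}).
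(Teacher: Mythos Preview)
Votre plan est essentiellement celui du papier : existence--unicit\'e de $\Pi_W$ par alg\`ebre lin\'eaire ponctuelle, puis \'equivalence lin\'earisabilit\'e/platitude par unicit\'e, puis renvoi au crit\`ere classique d'int\'egrabilit\'e. Les d\'ecomptes de dimension sont corrects et vos deux derni\`eres \'etapes sont compl\`etes.

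La lacune r\'eelle est celle que vous identifiez vous-m\^eme : il ne suffit pas de dire que la non-d\'eg\'en\'erescence du syst\`eme lin\'eaire \og{}refl\`ete l'hypoth\`ese de position g\'en\'erale\fg{}, il faut le d\'emontrer. Le papier y consacre l'essentiel de la preuve de la \PP \ref{P:existePI}. Pour $n=2$, c'est un d\'eterminant de Vandermonde classique. Pour $n=3$, un calcul explicite d'un d\'eterminant $15\times 15$ montre que celui-ci vaut (\`a constante pr\`es) le produit $\prod_{i<j<k} \omega^i\wedge\omega^j\wedge\omega^k$, non nul par position g\'en\'erale. Pour $n>3$, et c'est l'id\'ee que vous n'indiquez pas, on ne poursuit pas le calcul direct : on se ram\`ene au cas $n=3$ par \emph{tranchage}. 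En fixant $\alpha<\beta<n$, l'intersection $V_{\alpha\beta}$ de feuilles de $n-3$ des feuilletages est un $3$-plan totalement g\'eod\'esique pour toute connexion $\Pi$ admissible (\LL \ref{L:projint}) ; la restriction $\Pi|_{V_{\alpha\beta}}$ est alors admissible pour le $5$-tissu trace $W|_{V_{\alpha\beta}}$, et l'unicit\'e d\'ej\`a acquise en dimension $3$ annule tous les coefficients d'indices dans $\{\alpha,\beta\}$. En faisant varier $\alpha,\beta$, on obtient l'unicit\'e globale.

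Autrement dit, l'\og{}analyse explicite des $\omega_i$\fg{} que vous annoncez ne fonctionne directement qu'en petite dimension ; l'argument g\'en\'eral est g\'eom\'etrique et repose sur la restriction d'une connexion projective \`a une sous-vari\'et\'e totalement g\'eod\'esique. C'est cette id\'ee qu'il faut ajouter \`a votre plan.
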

Ce théorème se démontre très simplement: on commence par établir  l'existence et l'unicité de la connexion projective $\Pi_W$ compatible avec un $(n+2)$-tissu  $W$ donné. C'est la \PP \ref{P:existePI} dont la preuve repose essentiellement sur de l'algèbre linéaire. Le critère de linéarisation énoncé dans le \TT \ref{T:main} provient alors du critère (classique) caractérisant les connexions projectives plates  que l'on aura rappelé auparavant dans la Section \ref{S:connexionprojo} ({\it cf.} le \TT  \ref{T:caractrisationCARTANplat} ainsi que le \CC \ref{C:caractCNXIONprojoplate1}). 


 Du théorème précédent, on  déduit immédiatement le 
\begin{coro} 
\label{C:main}
Soit $d\geq n+2$. 
 Un   $d$-tissu  en hypersurfaces sur  un domaine $U$ de $\mathbb C^n$
 est linéarisable si et seulement si toutes les connexions projectives de ses $(n+2)$-sous-tissus coïncident et sont plates. 
\end{coro}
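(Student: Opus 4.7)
Le plan est d'obtenir ce corollaire comme cons\'equence quasi-imm\'ediate du \TT \ref{T:main}, le point-cl\'e \'etant l'unicit\'e de la connexion projective compatible avec un $(n+2)$-tissu donn\'e.

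Pour le sens direct, je partirais d'un diff\'eomorphisme local $\phi$ lin\'earisant le $d$-tissu $W$, et j'observerais que la connexion projective plate standard $\Pi_0$ sur l'espace d'arriv\'ee est compatible avec tout feuilletage par hyperplans affines, puisque ses hypersurfaces totalement g\'eod\'esiques sont exactement ces derniers. Son image r\'eciproque $\phi^*\Pi_0$ est alors une connexion projective plate (la platitude \'etant invariante par changement de coordonn\'ees ponctuelles) qui est compatible avec chaque $(n+2)$-sous-tissu $W'$ de $W$. L'unicit\'e affirm\'ee dans le \TT \ref{T:main} force alors $\Pi_{W'} = \phi^*\Pi_0$ pour tout $W'$, ce qui \'etablit simultan\'ement la coincidence et la platitude des connexions en question.

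Pour la r\'eciproque, notons $\Pi$ la valeur commune des $\Pi_{W'}$, suppos\'ee plate. Comme $d \geq n+2$, tout feuilletage de $W$ est contenu dans au moins un $(n+2)$-sous-tissu $W'$, donc ses feuilles sont totalement g\'eod\'esiques pour $\Pi_{W'} = \Pi$; la connexion $\Pi$ est ainsi compatible avec $W$ tout entier. La platitude de $\Pi$ fournit alors, via les r\'esultats rappel\'es dans la Section \ref{S:connexionprojo} (notamment le \CC \ref{C:caractCNXIONprojoplate1}), un syst\`eme de coordonn\'ees locales dans lequel $\Pi$ co\"incide avec la connexion plate standard; les hypersurfaces totalement g\'eod\'esiques y sont les hyperplans affines, et toutes les feuilles de $W$ sont donc devenues affines. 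Autrement dit, $W$ est lin\'earis\'e.

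Il n'y a pas de v\'eritable difficult\'e technique: tout le contenu math\'ematique a d\'ej\`a \'et\'e consign\'e dans le \TT \ref{T:main}. L'argument se r\'eduit \`a deux observations \'el\'ementaires: que la compatibilit\'e d'une connexion projective avec un $d$-tissu se lit d\'ej\`a au niveau de ses $(n+2)$-sous-tissus d\`es que $d\geq n+2$, et que l'unicit\'e de la connexion compatible permet d'identifier entre elles les diff\'erentes $\Pi_{W'}$. L'\'etape la plus subtile, s'il en est une, consiste \`a s'assurer que le changement de coordonn\'ees lin\'earisant $W$ tire bien en arri\`ere la connexion plate standard en une connexion projective qui reste plate --- ce qui est une propri\'et\'e de base des connexions projectives, encod\'ee dans leur d\'efinition m\^eme.
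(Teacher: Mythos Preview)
Votre preuve est correcte et suit exactement l'approche que l'article sous-entend en d\'eclarant le corollaire \og imm\'ediat\fg\ \`a partir du \TT \ref{T:main} : l'unicit\'e de la connexion compatible avec un $(n+2)$-tissu force la co\"incidence, et la platitude se transporte par le changement de coordonn\'ees lin\'earisant. Il n'y a rien \`a ajouter ni \`a corriger.
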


  Dans la Section \ref{S:codim1}, nous donnons  différentes formulations invariantes  des conditions de linéarisation qui apparaissent dans le \TT \ref{T:main} et le \CC \ref{C:main}. 
\vspace{-0.5cm}

\subsection{Notations}
\label{S:notations}\vspace{-0.15cm}
La liste ci-dessous présente certaines des notations   utilisées dans l'article:
\smallskip
\begin{tabbing}
aaaaaaaaaaaa \= Description \kill
$n$\> entier plus grand que 2;\\
$i,j,k,l$ \> indices variant entre $1$ et $n$;\\
$ \alpha, \beta, \gamma,\delta$ \> indices variant entre $1$ et $n$;\\
$M$ \> variété complexe connexe de dimension $n$;\\
$U$ \>  domaine ({\it i.e} ouvert connexe et simplement connexe) de $ M$ identifié à  un domaine  de $ \mathbb C^n$;   \\
$T_M$ \> fibré tangent de $M$;\\
$\Omega^k_M$ \> $k$-ième puissance extérieure du fibré cotangent de $M$;\\
$x^1,\ldots,x^n$ \> système de coordonnées holomorphes sur $U$;\\
$\nabla$ \> connexion affine sur $M$;\\
$\Gamma_{ij}^k$ \> symboles de Christofell de $\nabla$ dans les coordonnées
$x^1,\ldots,x^n$;\\
$[\nabla]$\> classe d'équivalence projective de $\nabla$;\\
$\Pi$ \> connexion projective sur $M$;\\
$\Pi_{ij}^k$ \> coefficients de Thomas de  $\Pi$ dans les coordonnées
$x^1,\ldots,x^n$;\\
$D( \mathbb C^n,0)$ \>  groupe $Dif\!f( \mathbb C^n,0)$  des germes de biholomorphismes holomorphes en $({\mathbb C}^n,0)$;\\
$G$ \> groupe de Lie complexe;\\
$H$ \> sous-groupe de Lie fermé de  $G$;\\
$\mathfrak g,\mathfrak h$ \> algèbres de Lie de $G$ et $H$ (respectivement);\\
$\rho$ \> application de passage au quotient $\rho:\mathfrak g\rightarrow \mathfrak g/\mathfrak h$;\\
$P$ \> $H$-fibré principal à droite sur $M$;\\
$\omega$ \> $\mathfrak g$-connexion de Cartan sur $P$;\\
$\Omega={\mathfrak C}(\omega)$ \> courbure de  $\omega$;\\
$\mathfrak t(\omega)$ \> torsion $\rho(\Omega)$ de $\omega$;\\
$\sigma$ \> jauge pour le fibré $P|_U\rightarrow U$;\\
$\omega_\sigma$ \>  forme de soudure $\sigma^* \omega$ associée à $\sigma$;\\
$\mathcal F$ \> feuilletage ;\\
$W$ \> tissu;\\
$d$\> nombre de feuilletages formant $W$;\\
$W_d$ \> $d$-tissu.

\end{tabbing}
\smallskip

Quelques commentaires et précisions s'imposent: l'ouvert $U$ désignera le plus souvent un voisinage ouvert connexe et simplement connexe de l'origine dans ${\mathbb C}^n$ et on aura souvent $M=U$. L'expression {\it   système de coordonnées  $x^1,\ldots,x^n$ sur $U$} fera référence aux composantes $x^i$ d'un biholomorphisme ${\bf x}:U\stackrel{\sim} {\rightarrow} {\bf x}(U)\subset \mathbb C^n$.  Comme l'on s'intéresse au problème de  caractériser les tissus  linéarisables par un biholomorphisme local, l'on se permettra de restreindre $U$ autour de l'origine aussi souvent qu'il sera nécessaire de le faire. \smallskip

Si $F$ est un fibré vectoriel sur $M$, on commettra l'abus d'écrire $F$ au lieu de $H^0(M,F)$: par exemple, la formule (\ref{E:weyltenseur}) doit se comprendre comme $ \mathfrak W_\Pi \in H^0(U,T_ U\otimes (\Omega^1_U)^{\otimes 3})$. 
\smallskip

Des sous-espaces vectoriels  $E_1,\ldots,E_d$ d'un  $\mathbb C$-espace vectoriel complexe $E$ de dimension $n$ sont dits {\it en position générale} si pour toute partie $A\subset \{1,\ldots,d\}$ on a 
${\rm dim} (  \sum_{a\in A} E_i ) = \min (  n,   \sum_{a\in A} {\rm dim} (E_a ) )$, ainsi que  ${\rm codim} (  \cap_{a\in A} E_A  ) = \min ( n, \sum_{a\in A}{\rm codim}(E_a))  $. 
 \`A chaque sous-espace $E_a$  de codimension $c_a$ est associée une {\it normale} $\mathfrak n_a\in \wedge ^{c_a} (E)$ qui est unique (modulo multiplication par une constante non-nulle): il suffit de prendre $\mathfrak n_a=\ell^1\wedge \cdots \wedge \ell^{c_a}$, où $\ell^1,\ldots,\ell^{c_a}$ désigne une base arbitraire de ${\rm Ann}(E_a)\subset E^*$. La donnée de $\mathfrak n_a$ (modulo multiplication)  est équivalente à celle de $E_a$ puisque  $E_a=\{ e \in E \, | \, i_e( \mathfrak n_a )\equiv 0 \, \}$.  Si $n=\sum_{a=1}^d {\rm dim}(E_a)$, l'hypothèse de position générale correspond à une décomposition en somme directe $E=E_1\oplus \cdots \oplus E_d$, ce qui équivaut à $ 
 \mathfrak n_1 \wedge \cdots \wedge \mathfrak n_d \neq 0$. \smallskip

Soient $\mathcal F_1,\ldots,\mathcal F_d$ des feuilletages holomorphes réguliers sur $M$, de codimensions respectives $c_1,\ldots,c_d$. Chacun d'eux  est défini par une {\it normale} $\Omega_a\in \Omega^{c_a}_M$ qui est unique modulo multiplication par une section partout non-nulle d'un fibré en droites $L_a$ sur $M$. Par définition, le $d$-uplet $W_d=( \mathcal F_1,\ldots,\mathcal F_d)$ est un {\it $d$-tissu} (ordonné) sur $M$ si les espaces tangents  $T_{\mathcal F_a,m}$ (pour $a=1,\ldots,d$) sont en position générale dans $T_{M,m}$, cela quel que soit le point $m\in M$. Un tissu $W$ est dit {\it mixte} si les codimensions de ses feuilletages ne sont pas toutes égales. Si $c=c_1=\cdots=c_d$, on parlera simplement de {\it $d$-tissu de codimension $c$}. Lorsque $c=1$ et $c=n-1$, on parlera aussi de tissus en hypersurfaces et de tissus en courbes (respectivement). 
 Enfin, lorsque $c$ divise la dimension $n$ de l'espace ambiant ({\it i.e} lorsque $n=k\,c$ pour un entier $k$),  l'hypothèse de position générale se traduit par 
$$
\Omega_{a_1}\wedge \cdots \wedge \Omega_{a_k}\neq 0
$$
quels que soient $a_1,\ldots,a_k$ tels que $1\leq a_1<a_2<\ldots<a_k\leq d$.\smallskip

Un tissu sur $U\subset \mathbb C^n$ est {\it linéaire} si les feuilles de ses feuilletages sont des (morceaux de) sous-espaces affines de $ \mathbb C^n$. Un tissu $W$ défini sur $M$ est {\it linéarisable} s'il existe un germe de biholomorphisme $\varphi:(M,m)\rightarrow (\mathbb C^n,\varphi(m))$ tel que  $\varphi_* (W)$ soit un germe de tissu linéaire. C'est une notion qui a tendance à se globaliser\footnote{Cela signifie ceci: si le germe d'un tissu  $W$ en un point $m\in M$ est linéarisable, alors il  l'est également  en tout point de $M$.} mais que l'on ne regardera ici que d'un point de vue local. 


\section{Connexions projectives}
\label{S:connexionprojo}
La notion de connexion projective est assez ancienne. Plusieurs auteurs voient sa naissance en 1921 dans le papier de Weyl \cite{weyl} qui a induit quasiment immédiatement un grand nombre d'articles sur le sujet. 
Deux courants d'études ont alors émergé. L'un se trouve souvent désigné comme  l'{\og 
\'Ecole de Princeton \fg} (constituée de 
Berwald, Eisenhart, Thomas, Veblen, Weyl, Whitehead parmi les plus connus), l'autre est principalement le fait d'\'Elie Cartan. 
Le point de vue de Cartan sur les connexions projectives est plus abstrait et conceptuel que celui de l'\'Ecole de Princeton. C'est un cas particulier des  notions d'{\og espace généralisé\fg}  et de {\og  connexion de Cartan\fg} introduites par  Cartan. L'approche de l'\'Ecole de Princeton à cet avantage d'être plus élémentaire et plus immédiate  pour calculer en coordonnées locales. Le lecteur intéressé pourra consulter \cite{borel} pour une comparaison des approches de l'école de Princeton et de celle de Cartan
. \smallskip

\subsection{Définition {\og à la Princeton\fg} via les connexions affines}
\label{S:viaconnexionaffine}
\subsubsection{Rappels sur les connexions affines}
Dans tout ce qui suit, l'on désigne par  $\nabla$ une connexion affine (holomorphe) sur $U$, c'est à dire un morphisme de fibrés vectoriels  $\nabla: T_U \rightarrow  \Omega_U^1\otimes T_U $ vérifiant l'identité~de~Leibniz 
$ \nabla(f X)=X\otimes df+f \nabla(X) $
pour tout germe de fonction holomorphe $f$ et tout germe de champ de vecteurs $X$ sur $U$. 
Un système de coordonnées $x^1,\ldots,x^n$ étant fixé sur $U$,  $\nabla$ est complètement définie par la donnée de ses symboles de Christoffel $\Gamma_{jk}^i$ définis par les relations 
$ \nabla\big( \frac{\partial }{\partial x^j}\big) = \Gamma_{ij}^k  \, dx^i \otimes \frac{\partial }{\partial x^k }$ pour $j=1,\ldots,n$. 
Les symboles de Christoffel $\overline{\Gamma}_{\beta\gamma}^\alpha$  de $\nabla$ relativement à un autre système de coordonnées $\overline{x}^1,\ldots, \overline{x}^n$ sont reliés aux $\Gamma_{jk}^i$ par les formules classiques 
(pour $\alpha,\beta,\gamma=1,\ldots,n$):
\begin{equation}
\label{E:transfochristofell}
 \overline{\Gamma}_{\alpha \beta}^\gamma= \Gamma_{ij}^k \frac{\partial x^i }{\partial \overline{x}^\alpha }\frac{\partial  x^j}{\partial \overline{x}^\beta}\frac{\partial 
\overline{x}^\gamma
}{\partial x^k}+ 
\frac{\partial^2 x^i }{\partial \overline{x}^\alpha \partial \overline{x}^\beta } \frac{\partial \overline{x}^\gamma}{\partial x^i}\,.
\end{equation}

La {\it torsion} de $\nabla$ est le tenseur $T$ deux fois contravariant tel que $\nabla_Y(X)-\nabla_X(Y)+[X,Y]+
T(X,Y)=0$ 
 quels que soient les champs de vecteurs $X$ et $Y$.  On vérifie que $\nabla$ est à torsion nulle si et seulement si elle est {\it symétrique}, c'est-à-dire si dans un (et donc dans tous) système(s) de coordonnées ses coefficients de Christoffel sont symétriques par rapport aux deux indices inférieurs. 

\begin{defi}
 Une {\it paramétrisation géodésique} (ou plus simplement, une ``géodésique'') pour la connexion $\nabla$ est une application holomorphe $c:t\mapsto c(t)$  définie sur un ouvert de $\mathbb C$ à valeurs dans $U$,~telle~que 
\begin{equation}\label{E:geod}
\nabla_{\frac{dc}{dt} }(  \frac{dc}{dt} ) \equiv 0.
\end{equation} 
\end{defi}

Si $c$  paramétrise une courbe et s'écrit $c(t)=(c^1(t),\ldots,c^n(t))$ dans des coordonnées $x^1,\ldots,x^n$, la relation 
(\ref{E:geod})  équivaut  
au système d'équations différentielles ordinaires du second ordre
\begin{equation}
\label{E:equatgeod}
\qquad 
 \frac{d^2c^k}{dt^2}+ \Gamma_{ij}^k\frac{dc^i}{dt} \frac{dc^j}{dt} \equiv 0 \qquad k=1,\ldots,n.
\end{equation}
On déduit facilement de la forme du système d'équations (\ref{E:equatgeod}) que, étant donné un vecteur tangent $\tau\in T_{U,u} $ en un point $u\in U$, il existe un unique germe de paramétrisation géodésique $c:(\mathbb C,0)\rightarrow (U,u)$ tel que $\frac{dc}{dt}|_{t=0}=\tau$.

\begin{defi}
 Une {\it courbe géodésique}  pour 
$\nabla$ est une courbe 
qui admet (localement) une paramétrisation géodésique. 
\end{defi}

\begin{prop} 
\label{P:systemdiffCgeodesique}
Soit $C\subset U$ une courbe. Les conditions suivantes sont équivalentes: 
\begin{enumerate}
 \item $C$ est une courbe géodésique pour la connexion $\nabla$;
\item il existe une paramétrisation $t\mapsto (c^1(t),\ldots,c^n(t))  $ de $C$ qui satisfait les équations différentielles
 \begin{equation}
\label{E:eqnonparamGEOD}
 \frac{dc^\ell}{dt} \Big( {\frac{d^2c^k}{dt^2}+ \Gamma_{ij}^k\frac{dc^i}{dt} \frac{dc^j}{dt}}{ 
}\Big)    =
\frac{dc^k}{dt}
 \Big( {\frac{d^2c^\ell}{dt^2}+ \Gamma_{ij}^\ell\frac{dc^i}{dt} \frac{dc^j}{dt}}
\Big);
\end{equation}
quels que soient $k,\ell=1,\ldots,n$;
\item  toute paramétrisation $t\mapsto (c^1(t),\ldots,c^n(t))  $ de $C$ satisfait les équations différentielles~(\ref{E:eqnonparamGEOD}).
\end{enumerate}
\end{prop}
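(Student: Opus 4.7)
The proposition compares three conditions on a curve $C$: being a geodesic, admitting a parametrization satisfying the symmetric equations \eqref{E:eqnonparamGEOD}, and every parametrization satisfying them. The key observation is that equation \eqref{E:eqnonparamGEOD} is precisely the coordinate-free statement that the ``geodesic defect'' vector $V^k(t)=\frac{d^2c^k}{dt^2}+\Gamma_{ij}^k\frac{dc^i}{dt}\frac{dc^j}{dt}$ is \emph{colinear} to the velocity vector $\frac{dc^k}{dt}$, since two vectors $V,W$ in $\mathbb{C}^n$ are colinear exactly when all $2\times 2$ minors $V^\ell W^k - V^k W^\ell$ vanish.

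The plan is to go around the cycle $(3)\Rightarrow(2)\Rightarrow(1)\Rightarrow(3)$. The implication $(3)\Rightarrow(2)$ is trivial: any curve $C$ (locally) admits at least one holomorphic parametrization. For $(2)\Rightarrow(1)$, assume $t\mapsto c(t)$ satisfies \eqref{E:eqnonparamGEOD}; by the colinearity remark above, there exists a holomorphic function $\lambda(t)$ with $V^k(t)=\lambda(t)\,\tfrac{dc^k}{dt}$. I look for a reparametrization $s\mapsto\gamma(s)=c(\tau(s))$ which is a true geodesic. A direct computation using the chain rule gives
\begin{equation*}
 \frac{d^2\gamma^k}{ds^2}+\Gamma_{ij}^k\frac{d\gamma^i}{ds}\frac{d\gamma^j}{ds}=\bigl((\tau')^2\lambda(\tau)+\tau''\bigr)\,\frac{dc^k}{dt}\Big|_{\tau(s)},
\end{equation*}
so it suffices to choose $\tau$ satisfying the scalar second-order ODE $\tau''+\lambda(\tau)\,(\tau')^2=0$, which admits local holomorphic solutions with $\tau'(0)\neq 0$. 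Such a $\tau$ gives a geodesic reparametrization of $C$.

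For $(1)\Rightarrow(3)$, suppose $\gamma(s)$ is a geodesic parametrization of $C$ and let $t\mapsto c(t)=\gamma(s(t))$ be any other parametrization, with $s'(t)\neq 0$. Substituting and using $\frac{d^2\gamma^k}{ds^2}=-\Gamma_{ij}^k\frac{d\gamma^i}{ds}\frac{d\gamma^j}{ds}$, the same chain rule computation yields
\begin{equation*}
 \frac{d^2c^k}{dt^2}+\Gamma_{ij}^k\frac{dc^i}{dt}\frac{dc^j}{dt}=\frac{s''(t)}{s'(t)}\,\frac{dc^k}{dt},
\end{equation*}
so $V^k$ is colinear to $\frac{dc^k}{dt}$ and hence \eqref{E:eqnonparamGEOD} is automatically satisfied. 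This closes the cycle and proves the three conditions are equivalent.

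No step here is really an obstacle: the whole argument is a bookkeeping exercise in the chain rule. The only point deserving slight attention is the holomorphic reparametrization ODE in $(2)\Rightarrow(1)$, where one must check that a nonvanishing solution $\tau$ exists locally, which follows from the standard existence theorem for holomorphic ODEs together with the freedom to prescribe $\tau(0)$ and $\tau'(0)\neq 0$.
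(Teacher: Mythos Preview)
Your proof is correct. The paper actually states this proposition without proof, treating it as a classical fact about geodesics under reparametrization; your argument is the standard one and fills in exactly the details one would expect (the chain-rule computation showing the geodesic defect transforms by a scalar multiple of the velocity under reparametrization, plus the elementary ODE $\tau''+\lambda(\tau)(\tau')^2=0$ to recover an affine parameter).
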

De ci-dessus, on déduit que, étant donnée une direction tangente $L\in \mathbb PT_{U,u}$ en  $u\in U$, il existe une unique  courbe géodésique $C$ passant par $u$  telle que $T_{C,u}=L$. Nous dirons que $C$ est la {\it courbe géodésique  issue de $L$}.


\subsubsection{Connexions affines projectivement équivalentes}

\begin{defi}
\label{D:defequivprojo}
 Deux connexions affines $\nabla$ et $\overline{\nabla}$ sur $U$ sont projectivement équivalentes si elles ont les mêmes courbes géodésiques. 
\end{defi}

\begin{prop}[\cite{eisenhart,weyl}]
\label{P:cnxprojoWEYL}
Deux connexions affines $\nabla$ et $\overline{\nabla}$ sur $U$ sont projectivement équivalentes si et seulement si il existe des quantités $\varphi_1,\ldots,\varphi_n$ telles que les symboles de Christoffel $\Gamma_{ij}^k$ et $\overline{\Gamma}_{ij}^k$ de $\nabla$ et $\overline{\nabla}$  (par rapport au même système de coordonnées) vérifient 
\begin{equation}
 \overline{\Gamma}_{ij}^k=
\Gamma_{ij}^k+\delta_{i}^k \varphi_j+\delta_{j}^k \varphi_i
\end{equation}
quels que soient $i,j,k=1,\ldots,n$. 
\end{prop}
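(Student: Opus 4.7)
Le plan est de poser $S_{ij}^k := \overline{\Gamma}_{ij}^k - \Gamma_{ij}^k$. D'apr\`es la loi de transformation (\ref{E:transfochristofell}), les parties non-tensorielles se compensent et $S_{ij}^k$ est un tenseur de type $(1,2)$. Comme les \'equations non param\'etris\'ees des g\'eod\'esiques (\ref{E:eqnonparamGEOD}) ne font intervenir que la partie sym\'etrique en $(i,j)$ des coefficients $\Gamma_{ij}^k$, on peut sans perte de g\'en\'eralit\'e remplacer $\nabla$ et $\overline{\nabla}$ par leurs sym\'etris\'es, ce qui revient \`a supposer $S_{ij}^k = S_{ji}^k$.

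Le sens $(\Leftarrow)$ se v\'erifie par substitution directe: pour toute param\'etrisation g\'eod\'esique $c(t)$ de $\nabla$, la relation hypoth\'etique donne
$$\frac{d^2 c^k}{dt^2}+\overline{\Gamma}_{ij}^k\frac{dc^i}{dt}\frac{dc^j}{dt} \;=\; 2\,\varphi_i\frac{dc^i}{dt}\cdot\frac{dc^k}{dt},$$
membre de droite visiblement proportionnel \`a $\frac{dc^k}{dt}$. La condition (\ref{E:eqnonparamGEOD}) pour $\overline{\Gamma}$ est donc satisfaite, et la \PP \ref{P:systemdiffCgeodesique} permet de conclure que l'image de $c$ est une courbe g\'eod\'esique de $\overline{\nabla}$. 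On inverse les r\^oles de $\nabla$ et $\overline{\nabla}$ pour obtenir l'\'equivalence projective au sens de la \DD \ref{D:defequivprojo}.

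Le sens $(\Rightarrow)$ est l'\'etape centrale. Fixons $u\in U$ et un vecteur $v\in T_{U,u}$; la param\'etrisation g\'eod\'esique de $\nabla$ partant de $u$ avec vitesse initiale $v$ satisfait (\ref{E:eqnonparamGEOD}) pour $\overline{\Gamma}$ par hypoth\`ese, et l'on obtient en $t=0$, en soustrayant la m\^eme identit\'e pour $\Gamma$:
$$v^\ell\, Q^k(v) \;=\; v^k\, Q^\ell(v), \qquad Q^k(v):= S_{ij}^k v^iv^j, \qquad k,\ell=1,\ldots,n.$$
Vu comme identit\'e polynomiale en $(v^1,\ldots,v^n)$, cela force, par un argument de divisibilit\'e (les $v^k$ sont premiers entre eux dans l'anneau $\mathbb{C}[v^1,\ldots,v^n]$), l'existence d'un polyn\^ome unique $\mu$, n\'ecessairement homog\`ene de degr\'e $1$, tel que $Q^k(v)=v^k\,\mu(v)$ pour tout $k$. \'Ecrivant $\mu(v)=2\varphi_i(u)\,v^i$ et polarisant l'identit\'e $S_{ij}^kv^iv^j=2\varphi_i v^iv^k$ gr\^ace \`a la sym\'etrie de $S_{ij}^k$ en $(i,j)$, on aboutit \`a
$$S_{ij}^k \;=\; \delta_i^k\,\varphi_j+\delta_j^k\,\varphi_i.$$

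Aucun obstacle s\'erieux n'est attendu: le point le plus d\'elicat est le passage de la relation de colin\'earit\'e $v^\ell Q^k=v^kQ^\ell$ \`a la factorisation $Q^k=v^k\,\mu(v)$, qui se r\`egle par une simple consid\'eration de p.g.c.d.\ dans l'anneau de polyn\^omes, suivie d'une polarisation standard. La tensorialit\'e de $S$ assure enfin que les quantit\'es $\varphi_i$ se recollent bien en une $1$-forme sur $U$.
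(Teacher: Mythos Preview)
Votre d\'emonstration est correcte. Le papier ne fournit pas de preuve de cette proposition~: il se contente de la citer comme un r\'esultat classique d\^u \`a Eisenhart et Weyl, de sorte qu'il n'y a pas de comparaison d'approche \`a faire. Votre argument est d'ailleurs exactement la d\'emonstration standard de ce r\'esultat (voir par exemple Eisenhart ou Kobayashi--Nomizu)~: l'identit\'e polynomiale $v^\ell Q^k(v)=v^k Q^\ell(v)$ combin\'ee \`a la primalit\'e mutuelle des $v^k$ force $Q^k(v)=v^k\mu(v)$, et la polarisation conclut. Une remarque de d\'etail~: la relation $(n+1)\,\varphi_j=S^i_{ij}$ obtenue par contraction montre directement que les $\varphi_j$ forment une $1$-forme, ce qui pr\'ecise votre derni\`ere phrase.
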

On vérifie immédiatement que l'équivalence projective définit bien une relation d'équivalence sur l'espace des connexions affines. On peut donc poser la 
\begin{defi}
\label{D:defconnexionprojective}
 Une connexion projective $\Pi$ sur $U$ est la donnée d'une classe d'équivalence projective de connexions affines sur $U$. On note $\Pi=[\nabla]$ si $\nabla$ est un représentant de $\Pi$.
\end{defi}
On vient de définir la   notion de connexion projective sur une variété complexe isomorphe à un domaine de $\mathbb C^n$. La notion générale de connexion projective s'en déduit  par recollement. 
\begin{defi}
\label{D:defconnexionprojective2}
 Une connexion projective $\Pi$ sur $M$ est la donnée d'une classe d'équivalence\footnote{La notion d'équivalence dont il est question ici est claire vu le contexte. Nous laissons le soins au lecteur de l'expliciter.}  d'atlas  $ (U_i, [\nabla_i])_{i\in I} $ sur $M $ tels que
\begin{enumerate}
  \item  $(U_i)_{i\in I}$ est un recouvrement  de $M$ par des ouverts isomorphes à des domaines de $\mathbb C^n$;
\item pour tout $i\in I$, $[\nabla_i]$ est une connexion projective sur $U_i$ (au sens de la \DD \ref{D:defconnexionprojective});
\item on a  $[\nabla_i]=[\nabla_j]$ sur  toute intersection $U_i\cap U_j$ non-vide.
 \end{enumerate}
\end{defi}
\begin{rema}
La définition ci-dessus s'étend bien sûr au réel mais est alors maladroite puisque la \DD \ref{D:defconnexionprojective} suffit dans ce cadre. En effet,  si $\Pi_{\mathbb R}$ désigne une connexion projective réelle sur une variété réelle $M_{\mathbb R}$ arbitraire (seulement supposée paracompacte pour assurer l'existence de partitions de l'unité),  il existe toujours une connexion affine réelle  $\nabla_{\mathbb R}$ sur $M_{\mathbb R}$ telle qu'on puisse écrire globalement $\Pi_{\mathbb R}=[\nabla_{\mathbb R}]$. Dans le cadre holomorphe, cela est vrai si et seulement si le 1-cocycle (3.2) de \cite{kobayashiochiai} est nul dans le groupe de cohomologie $H^1(M,\Omega^1_M)$, ce qui est vérifié si et seulement si 
le fibré canonique $K_M=\Omega^n_M$ de $M$ peut être muni d'une 
 connexion affine holomorphe (comme il  ressort de la lecture des   pages 96-97 de \cite{gunning}).\footnote{L'auteur remercie Sorin Dumitrescu pour avoir attiré son attention sur ce point.}
\end{rema}

\begin{prop}
 Une connexion affine $\nabla$ étant donnée, il existe une unique connexion symétrique  admettant les mêmes paramétrisations géodésiques. C'est la connexion $\overline{\nabla}$ définie par les symboles de Christofell 
$$\overline{\Gamma}_{ij}^k=\frac{1}{2} \big({\Gamma}_{ij}^k+  {\Gamma}_{ji}^k  \big).$$
\end{prop}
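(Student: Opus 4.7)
L'observation cruciale est que dans le système~(\ref{E:equatgeod}), le produit $\frac{dc^i}{dt}\frac{dc^j}{dt}$ est symétrique en $(i,j)$. Seule la partie symétrique $\frac{1}{2}(\Gamma_{ij}^k+\Gamma_{ji}^k)$ des coefficients de Christoffel intervient donc effectivement dans l'équation des géodésiques. Tout le reste de la preuve repose sur cette remarque.

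Je commencerais par vérifier que les quantités $\overline{\Gamma}_{ij}^k=\frac{1}{2}(\Gamma_{ij}^k+\Gamma_{ji}^k)$ définissent bien une connexion affine $\overline{\nabla}$ sur $U$, c'est-à-dire qu'elles obéissent à la loi de transformation~(\ref{E:transfochristofell}) sous tout changement de coordonnées. La partie linéaire de cette loi se symétrise tautologiquement puisqu'échanger les indices inférieurs revient à échanger les deux facteurs jacobiens correspondants; quant au terme inhomogène $\frac{\partial^2 x^i}{\partial\overline{x}^\alpha\partial\overline{x}^\beta}\frac{\partial\overline{x}^\gamma}{\partial x^i}$, il est déjà symétrique en $(\alpha,\beta)$. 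Les $\overline{\Gamma}_{ij}^k$ se transforment donc comme les symboles d'une connexion, qui est de plus symétrique par construction.

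Je montrerais ensuite que $\overline{\nabla}$ a les mêmes paramétrisations géodésiques que $\nabla$: une courbe $c(t)=(c^1(t),\ldots,c^n(t))$ satisfait~(\ref{E:equatgeod}) pour $\nabla$ si et seulement si elle la satisfait pour $\overline{\nabla}$, puisque par la remarque initiale on a l'identité
\begin{equation*}
\overline{\Gamma}_{ij}^k\frac{dc^i}{dt}\frac{dc^j}{dt}=\Gamma_{ij}^k\frac{dc^i}{dt}\frac{dc^j}{dt}.
\end{equation*}

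Pour l'unicité, je considérerais une connexion symétrique arbitraire $\widetilde{\nabla}$ de symboles $\widetilde{\Gamma}_{ij}^k$ admettant les mêmes paramétrisations géodésiques que $\nabla$ (donc que $\overline{\nabla}$). Pour tout $u\in U$ et tout $\tau=(v^1,\ldots,v^n)\in T_{U,u}$, l'unicité des paramétrisations géodésiques de vecteur tangent initial $\tau$ force, en identifiant les dérivées secondes à $t=0$, l'égalité $\widetilde{\Gamma}_{ij}^k(u)v^iv^j=\overline{\Gamma}_{ij}^k(u)v^iv^j$ pour tout $v\in\mathbb{C}^n$. Comme les deux membres sont symétriques en $(i,j)$, la polarisation d'une forme quadratique entraîne $\widetilde{\Gamma}_{ij}^k(u)=\overline{\Gamma}_{ij}^k(u)$, et $u$ étant arbitraire, $\widetilde{\nabla}=\overline{\nabla}$. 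L'argument est entièrement routinier; la seule petite subtilité est la vérification de la bonne définition de $\overline{\nabla}$ en tant que connexion, qui se ramène à la symétrie en $(\alpha,\beta)$ du terme inhomogène de~(\ref{E:transfochristofell}).
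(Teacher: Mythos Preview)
Your proof is correct and entirely standard; the paper states this proposition without proof (it is treated as classical), so there is nothing to compare against, but your argument is exactly the expected one: symmetry of $\frac{dc^i}{dt}\frac{dc^j}{dt}$ for existence, polarization of the quadratic form $v\mapsto\Gamma_{ij}^k v^iv^j$ for uniqueness.
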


\begin{coro}
 Une connexion affine $\nabla$ étant donnée, il existe une connexion symétrique qui lui est projectivement équivalente
\end{coro}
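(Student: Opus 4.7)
Mon plan est d'invoquer directement la proposition qui précède, dont le corollaire n'est qu'un affaiblissement immédiat. Étant donnée une connexion affine $\nabla$ de symboles de Christoffel $\Gamma_{ij}^k$ dans un système de coordonnées fixé, je considère la connexion affine $\overline{\nabla}$ définie par les symboles symétrisés
\[
\overline{\Gamma}_{ij}^k = \frac{1}{2}\bigl(\Gamma_{ij}^k + \Gamma_{ji}^k\bigr).
\]
Cette connexion est par construction symétrique (ses symboles sont invariants par échange des deux indices inférieurs), et la proposition précédente affirme précisément qu'elle admet exactement les mêmes paramétrisations géodésiques que $\nabla$.

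Il suffit alors d'observer que deux connexions ayant les mêmes paramétrisations géodésiques ont \emph{a fortiori} les mêmes courbes géodésiques: l'image (le support non paramétré) d'une paramétrisation géodésique commune est une courbe géodésique pour les deux connexions. D'après la Définition \ref{D:defequivprojo}, $\nabla$ et $\overline{\nabla}$ sont donc projectivement équivalentes, et $\overline{\nabla}$ fournit la connexion symétrique cherchée dans la classe $[\nabla]$.

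Aucune étape technique délicate n'est à prévoir: le seul point à souligner est que l'on passe, de la proposition au corollaire, d'une coïncidence des paramétrisations géodésiques (condition plus fine) à une coïncidence des seules courbes géodésiques (condition moins fine, qui est exactement celle servant à définir l'équivalence projective). Cette implication est formelle et ne nécessite aucun calcul supplémentaire. La vérification à effectuer se résume donc à rappeler la formule de symétrisation ci-dessus puis à invoquer la proposition et la Définition \ref{D:defequivprojo}.
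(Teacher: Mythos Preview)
Your proof is correct and is exactly the argument the paper intends: the corollary is stated immediately after the proposition on the symmetrized connection $\overline{\Gamma}_{ij}^k=\tfrac{1}{2}(\Gamma_{ij}^k+\Gamma_{ji}^k)$ and is meant to follow from it by the trivial implication ``same geodesic parametrizations $\Rightarrow$ same geodesic curves'', which is precisely what you spell out.
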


\begin{defi}
 Une connexion affine  $\nabla$ est de {\it trace nulle relativement à un système de coordonnées} $x^1,\ldots,x^n$ si dans celui-ci, les coefficients de Christoffel $\Gamma_{ij}^k$ de $\nabla$  vérifient (pour 
$j=1,\ldots,n$)
\begin{equation}
\sum_{i=1}^n \Gamma_{ij}^i=
\sum_{i=1}^n \Gamma_{\!ji}^i=0 \,.
\end{equation}
\end{defi}
On prendra garde à ce que la  notion introduite dans la définition ci-dessus n'est pas invariante mais dépend du système de coordonnées. 

\'Etant donné une connexion projective $\Pi$ sur $U$, soit $\nabla$ une connexion affine symétrique qui la représente. Soient  $\Gamma^k_{ij}$ ses coefficients de Christoffel relativement à  des coordonnées  $x^1,\ldots,x^n$ fixées.  Posons alors 
\begin{equation}
\label{E:defPi}
 {\Pi}^k_{ij}=\Gamma^k_{\!ij}-\delta_{i}^k \Big( \frac{1}{n+1} 
\Gamma^\ell_{\ell j}\Big)  
-\delta_{j}^k \Big( \frac{1}{n+1} 
\Gamma^\ell_{i \ell }\Big)  
\end{equation}
pour tout $i,j,k=1,\ldots,n$. Les coefficients ${\Pi}^k_{ij}$ définissent 
une connexion affine sans torsion de trace nulle dans les coordonnées $x^1,\ldots,x^n$ qui est  (vu (\ref{E:defPi}) et d'après la \PP \ref{P:cnxprojoWEYL}) projectivement équivalente à $\nabla$. Par ailleurs, on montre facilement que, le système de coordonnées $x^1,\ldots,x^n$ étant toujours fixé, il existe une unique connexion affine symétrique de trace nulle dans la classe d'équivalence projective de $\nabla$. 
\begin{defi}
 Les  quantités $  {\Pi}^k_{ij}$ (pour $i,j,k=1,\ldots,n)$ sont les   {\it coefficients  de Thomas} de la connexion projective $\Pi$ relativement au système de coordonnées $x^1,\ldots,x^n$. 
\end{defi}

Soient alors $\overline{\Pi}^\gamma_{\alpha\beta}$ ($\alpha,\beta,\gamma=1,\ldots,n$) les coefficients de Thomas de $\Pi$ relativement à un autre système de coordonnées $\overline{x}^1,\ldots,\overline{x}^n$ sur $U$. On déduit sans difficulté des formules (\ref{E:transfochristofell}) que pour tout $\alpha,\beta,\gamma=1,\ldots,n$,~on~a
\begin{equation}
\label{E:transocoeffTHOMAS}
 \overline{\Pi}^\gamma_{\alpha\beta}={\Pi}^k_{ij}\frac{\partial x^i }{\partial \overline{x}^\alpha }\frac{\partial  x^j}{\partial \overline{x}^\beta}\frac{\partial 
\overline{x}^\gamma
}{\partial x^k}+ 
\frac{\partial^2 x^i }{\partial \overline{x}^\alpha \partial \overline{x}^\beta } \frac{\partial \overline{x}^\gamma}{\partial x^i}
- \frac{1}{n+1} \Big( 
\delta_\alpha^\gamma \frac{\partial \log \Delta }{ \partial \overline{x}^\beta} + 
\delta_\beta^\gamma \frac{\partial \log \Delta }{ \partial \overline{x}^\alpha}
\Big)
\end{equation} 
où  $\Delta$  désigne le déterminant jacobien du changement de paramétrisation 
$(\overline{x}^1,\ldots,\overline{x}^n)\mapsto (x^1,\ldots,x^n)$. 

Puisqu'une connexion projective est localement définie pas ses coefficients de Thomas, on en déduit la 
\begin{prop}
\label{P:connexionprojoTHOMAS}
 La donnée d'une connexion projective sur $M$ est équivalente à la donnée, pour chaque système de coordonnées $x^1,\ldots,x^n$ sur $M$, de coefficients ${\Pi}^k_{ij}$ vérifiant $ {\Pi}^k_{ij}={\Pi}^k_{ji}$ et 
$\sum_{\ell=1}^n 
{\Pi}^\ell_{\ell j}=0$ 
pour tout indice $i,j,k$  compris entre 1 et $n$ et satisfaisant aux lois de transformations 
{\rm (\ref{E:transocoeffTHOMAS})}. 
\end{prop}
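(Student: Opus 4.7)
Le plan est de v\'erifier s\'epar\'ement les deux sens de l'\'equivalence annonc\'ee.

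Pour le sens direct, on part d'une connexion projective $\Pi$ sur $M$ au sens de la \DD \ref{D:defconnexionprojective2}. Dans tout syst\`eme de coordonn\'ees $x^1,\ldots,x^n$ sur une carte locale de $M$, la discussion pr\'ec\'edant l'\'enonc\'e fournit, gr\^ace \`a la formule (\ref{E:defPi}), un unique repr\'esentant affine sym\'etrique de trace nulle de $\Pi$, dont les coefficients de Christoffel sont par d\'efinition les coefficients de Thomas $\Pi^k_{ij}$ cherch\'es. La sym\'etrie en $(i,j)$ et la trace nulle $\sum_\ell \Pi^\ell_{\ell j}=0$ sont alors imm\'ediates. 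Reste \`a v\'erifier la loi de transformation (\ref{E:transocoeffTHOMAS}): on reporte (\ref{E:transfochristofell}) dans la formule (\ref{E:defPi}) r\'e\'ecrite dans les nouvelles coordonn\'ees et l'on simplifie \`a l'aide de l'identit\'e classique
\[
\frac{\partial \log \Delta}{\partial \overline{x}^\alpha}= \frac{\partial^2 x^i}{\partial \overline{x}^\alpha \partial \overline{x}^\gamma} \frac{\partial \overline{x}^\gamma}{\partial x^i},
\]
qui n'est autre que la formule de d\'erivation du d\'eterminant de la matrice jacobienne du changement de coordonn\'ees.

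Pour la r\'eciproque, on se donne une famille de coefficients $\Pi^k_{ij}$, index\'ee par les syst\`emes de coordonn\'ees sur $M$, qui est sym\'etrique, de trace nulle, et dont la loi de changement de coordonn\'ees est (\ref{E:transocoeffTHOMAS}). Dans chaque carte $U_i$ de coordonn\'ees $x^1,\ldots,x^n$, ces coefficients d\'efinissent imm\'ediatement une connexion affine sym\'etrique $\nabla_i$ sur $U_i$, d'o\`u une connexion projective locale $[\nabla_i]$ au sens de la \DD \ref{D:defconnexionprojective}. Pour conclure que ces donn\'ees se recollent en une connexion projective globale sur $M$ (au sens de la \DD \ref{D:defconnexionprojective2}), il faut v\'erifier $[\nabla_i]=[\nabla_j]$ sur chaque intersection non vide $U_i\cap U_j$. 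Or la comparaison entre la loi (\ref{E:transfochristofell}) et la loi (\ref{E:transocoeffTHOMAS}) montre que les coefficients $\overline{\Pi}^\gamma_{\alpha\beta}$ obtenus par transformation diff\`erent des coefficients $\overline{\Gamma}^\gamma_{\alpha\beta}$ qu'on obtiendrait pour une connexion affine d'une quantit\'e exactement de la forme $\delta_\alpha^\gamma \varphi_\beta+\delta_\beta^\gamma \varphi_\alpha$ avec $\varphi_\alpha=-\frac{1}{n+1}\partial_\alpha \log \Delta$. Par la \PP \ref{P:cnxprojoWEYL}, cela entra\^ine pr\'ecis\'ement que les connexions affines obtenues dans les deux cartes sont projectivement \'equivalentes sur $U_i\cap U_j$, d'o\`u l'\'egalit\'e $[\nabla_i]=[\nabla_j]$ recherch\'ee.

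L'obstacle principal r\'eside ainsi dans la v\'erification calculatoire de la loi (\ref{E:transocoeffTHOMAS}) au sens direct: une fois celle-ci acquise, l'argument de recollement n\'ecessaire \`a la r\'eciproque d\'ecoule m\'ecaniquement de la \PP \ref{P:cnxprojoWEYL} et de la d\'efinition globale d'une connexion projective.
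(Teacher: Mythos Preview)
Votre d\'emonstration est correcte et suit exactement l'approche du papier, qui se contente d'indiquer que la proposition d\'ecoule imm\'ediatement de la discussion pr\'ec\'edente (d\'efinition des coefficients de Thomas via (\ref{E:defPi}) et d\'erivation de la loi (\ref{E:transocoeffTHOMAS}) \`a partir de (\ref{E:transfochristofell})). Vous explicitez avec soin les deux sens de l'\'equivalence --- notamment l'argument de recollement via la \PP \ref{P:cnxprojoWEYL} --- l\`a o\`u le papier laisse tout implicite.
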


\begin{rema} La proposition  ci-dessus permet de donner une définition plus formelle de ce qu'est une connexion projective.  Si $(U_\nu,x_\nu=(x_\nu^i,\ldots,x_\nu^n))$ est un atlas sur $M$ et si l'on note $\Pi_{\nu,ij}^k$ les coefficients de Thomas de $\Pi$ dans les cartes  $(U_\nu,x_\nu)$, alors les $\Pi_\nu=  \Pi_{\nu,ij}^k dx_\nu^i \otimes dx_\nu^i\otimes \frac{\partial}{\partial x_\nu^k}$ définissent une 0-cochaine sur $M$ à valeurs dans ${\rm Sym}^2(\Omega_M^1)\otimes T_M$. 
Avec ce formalisme, les formules de transformations (\ref{E:transocoeffTHOMAS}) signifient que le cobord  de $\{ \Pi_\nu \}$ 
est égal au 1-cocycle $ \{ S(x_{\mu\nu}) \}$ où $S(x_{\mu\nu})$ désigne la {\og dérivée de Schwarz\fg} du  changement de coordonnées $x_\nu\circ x_{\mu}^{-1}$ (voir \cite{gunning} et \cite{molzon}). La donnée d'une connexion projective sur $M$ est équivalente à la donnée d'une 0-cochaine à valeurs dans ${\rm Sym}^2(\Omega_M^1)\otimes T_M$  satisfaisant cette  propriété.
\end{rema}

\subsection{Interprétation en termes de connexion de Cartan}
Il nous semble intéressant d'interpréter en des termes plus conceptuels la notion de connexion projective. Cela pourra se révéler important par exemple lors de l'étude des tissus globaux sur les variétés compactes. 

Pour cela, nous nous appuirons sur  le concept d'{\og espace généralisé\fg} introduit par Cartan, qui peut être considéré comme un équivalent infinitésimal courbe de la notion de géométrie de Klein.  Le récent livre de Sharpe \cite{sharpe} a contribué à populariser ces notions et est une référence assez complète. Pour certains points cependant, nous ferons plutôt référence au livre \cite{kobayashi} de Kobayashi.

\subsubsection{Espaces généralisés et connexions de Cartan}
Soit $H$ un sous-groupe de Lie fermé d'un groupe de Lie complexe $G$  tel que l'espace homogène $G/H$ soit de dimension $n$. Une  géométrie de Klein modelée sur la paire $(G,H)$ sur une variété $M$ de dimension $n$ est la donnée d'un atlas $\{ (U_\nu,F_\nu) \}$ sur $M$, les $F_\nu$ étant à valeurs dans $G/H$ et vérifiant 
\begin{equation}
\label{E:GH}
 F_\mu=g_{\mu\nu}\cdot F_\nu 
\end{equation}
sur les intersections non-vides $U_\mu\cap U_\nu$, pour certains $g_{\mu\nu}\in G$ constants. 

Pour tout $\nu$, notons $P_\nu$ le $H$-fibré principal $P_\nu =F_\nu^* P$ sur $U_\nu$ obtenu en tirant en arrière par $F_\nu$ le fibré principal tautologique $P= G\rightarrow G/H$. Les relations (\ref{E:GH}) impliquent que les $P_\nu$ se recollent pour former un $H$-fibré principal à droite $P_M$ sur $M$. Soit  $\omega_G\in \Omega^1(G)\otimes   \mathfrak g$ la forme de Maurer-Cartan de $G$. Les ``tirés-en-arrière'' $F_\nu^*(\omega_G)$ se recollent eux aussi pour former une 1-forme différentielle 
$\omega\in \Omega^1(P_M)\otimes   \mathfrak g$ qui vérifie

\begin{enumerate}
\item[1.]  $\omega_{p}: T_p {P_M} \rightarrow \mathfrak g$ est un isomorphisme linéaire quel que soit $p\in P$;
 \item[2.] $\omega(X^\dagger)=X$ pour tout $X\in \mathfrak h$, où  $X^\dagger$ désigne le {\it champ de vecteurs fondamental} associé à $X$ (c'est-à-dire le générateur infinitésimal du (germe de) flot complexe $z\mapsto \exp(zX)$);
\item[3.]  $R_h^*(\omega)=Ad(h^{-1})\cdot \omega$ pour tout $h\in H$, où  $R_h $ désigne la multiplication à droite sur $P_M$ par $h$ et $Ad(h^{-1})$ dénote l'action adjointe de $h^{-1}$ sur $\mathfrak g$;
\item[4.] l'équation de Maurer-Cartan $d\omega+ \frac{1}{2} [ \omega,  \omega]\equiv 0$. 
\end{enumerate}

Une {\it géométrie de Cartan} (ou plus précisément une {\it connexion de Cartan})  sur $M$ modelée sur $(G,H)$ (ou plus justement sur $(\mathfrak g,\mathfrak h)$) est la donnée  d'un $H$-fibré principal à droite $P_M\rightarrow M$ et d'une 1-forme différentielle $\omega \in \Omega^1(P_M)\otimes   \mathfrak g$ qui vérifient les conditions 1., 2.  et 3.  ci-dessus, mais pas forcément la condition 4. Les géométries de Klein sont donc des cas particuliers de géométries de Cartan, que l'on dira {\it plates}. Une géométrie de Cartan localement isomorphe (en un sens naturel) à une géométrie de Klein est dite {\it intégrable}. En toute généralité, on définit  la {\it courbure de Cartan} de la géométrie de Cartan considérée  comme étant la 2-forme  différentielle $\mathfrak C(\omega)=d\omega+ \frac{1}{2} [ \omega, \omega]\in \Omega^2(P_M)\otimes \mathfrak g$. Celle-ci mesure le défaut de platitude de la géométrie de Cartan donnée,  
comme on le verra Section \ref{S:platitude}. 

La {\it torsion} $\mathfrak t(\omega)$ d'une connexion de Cartan est définie comme $\mathfrak t(\omega)=\rho( \mathfrak C(\omega))$ où l'on a noté $\rho:\mathfrak g \rightarrow \mathfrak g/\mathfrak h$ le passage au quotient canonique.  La connexion de Cartan $\omega$ est  {\it sans torsion} si $\mathfrak t(\omega)$ est identiquement nulle, c'est-à-dire si sa courbure $\mathfrak C(\omega)$ est à valeurs dans $\mathfrak h$.

L'action adjointe de $ G$ sur lui-même induit une action de $ H$ sur $ \mathfrak g$ et donc sur $ \mathfrak g/\mathfrak h$. 
 On en déduit une {\it action adjointe} de $H$ sur ${\rm Hom}( \wedge^2( \mathfrak g/\mathfrak h),\mathfrak g)$ qui fait de cet espace vectoriel complexe un $H$-module. La {\it fonction courbure} de $\omega$ est l'application $K: P\rightarrow {\rm Hom}( \wedge^2( \mathfrak g/\mathfrak h),\mathfrak g)$ définie de la façon suivante: si $p\in P_M$, alors 
$$ K_p(\zeta,\zeta'):=\mathfrak C(\omega)_p \Big(     \omega_p^{-1}(\zeta), \omega_p^{-1}(\zeta')  \Big)\,  $$
pour tout $\zeta,\zeta' \in  \mathfrak g/\mathfrak h$ (on montre que $K$ est bien définie et est un tenseur associé à $\omega$ ({\it cf.}     \cite[Lemma {\bf 5}.3.23]{sharpe})). 

Il est classique de s'intéresser aux géométries de Cartan {\og spéciales\fg}, {\it i.e.}  celles dont la fonction courbure est à valeurs dans un  sous-$H$-module spécifique que l'on décrète être le {\og sous-module  normal\fg} $N$ de ${\rm Hom}( \wedge^2( \mathfrak g/\mathfrak h),\mathfrak g)$. On peut donner une idée de l'intérêt de cette notion de la façon suivante: un bon nombre de structures géométriques\footnote{Par exemple: les connexions projectives, les structures conformes, les structures quasi-grassmanniennes, etc. Nous renvoyons le lecteur aux articles  \cite{ochiai,capschichl} pour un point de vue rigoureux sur cette question.}
peuvent être interprétées en termes de géométrie de Cartan (en général après plusieurs prolongations différentielles à des ordres plus grands). Se pose alors le problème d'associer de façon canonique une connexion de Cartan $\omega$  à la structure géométrique initiale. 
Dans bon nombre de cas, cela se fait de façon satisfaisante en demandant que $\omega$ soit normale (en ayant au préalable choisis et convenablement défini la ``bonne'' notion de normalité). Dans les cas où cette stratégie s'applique, cela force l'unicité de la connexion de Cartan compatible avec une $G$-structure donnée et permet (par exemple) d'aborder avec un cadre conceptuel bien dégagé les problèmes d'équivalence. 

Nous ne discuterons pas davantage la notion de {\og connexion de Cartan normale\fg} en toute généralité. Nous donnerons plus bas une définition précise  dans le  cas des connexions projectives, 
qui est le seul cas  qui nous~intéresse~ici.

\subsubsection{Jets, fibrés de repères et formes canoniques } 
Soit $m$ un point de $M$. 
Pour $k\geq 0$, on note $[\varphi]_0^k$ 
le jet à l'ordre $k$ en l'origine de $\varphi: (\mathbb C^n,0)\rightarrow (M,m)$. Si $\varphi $ est inversible, on dit que $[\varphi]_0^k$ 
est un repère d'ordre $k$ sur $M$ en $m$.  On note $R_k(M)$ l'ensemble des repères d'ordre $k$ sur $M$. On vérifie que de $M$, il hérite d'une structure de variété complexe faisant des applications de projections 
 $\pi_k^\ell: [\varphi]_0^k  \in R_k(M) \mapsto [\varphi]_0^\ell \in R_\ell(M) $ des submersions holomorphes (pour tout $k,\ell$ tels que $k \geq \ell$).

On note $G_k({\mathbb C}^n)$ l'espace des repères d'ordre $k$ en l'origine de $\mathbb C^n$: $ G_k({\mathbb C}^n)= \{ \,  [f]_0^k \, \big| \, f \in D( \mathbb C^n,0) \, \} $. 
La composition des (germes de) biholomorphismes induit une structure de groupe de Lie complexe sur $ G_k({\mathbb C}^n)$. 
On vérifie alors que l'action 
\begin{align*}
R_k(M) \times G_k(\mathbb C^n)& \longrightarrow R_k(M) \\
\big( [\varphi]_0^k, [f]_0^k       \big)
&\longmapsto [\varphi\circ f]_0^k
\end{align*}
fait de  $\pi_k: R_k(M) \rightarrow M$  un $ G_k({\mathbb C}^n)$-fibré principal (à droite).

Supposons que $k>0$. Chacun des fibrés  $ R_k(M)$ est muni d'une {\it 1-forme différentielle canonique} $\theta_k$ à valeurs dans l'espace tangent 
de $R_{k-1}(\mathbb C^n)$ en le jet à l'ordre $k-1$ de $Id_{\mathbb C^n}$,  que l'on note $I\!d$ quel que soit $k$.  Soit $v\in T_\xi\, R_k(M)$ avec $\xi=[\varphi]_0^k$. On définit $\theta_k(v)$ de la façon suivante: soit  $\overline{\xi}=\pi_k^{k-1}(\xi)=[\varphi]_0^{k-1}$.
Alors le  biholomorphisme local $\varphi: (\mathbb C^n,0)\rightarrow M$ induit un germe de biholomorphisme 
\begin{align*}
 \varphi^{[k-1]}:  \big( R_{k-1}(\mathbb C^n),I\!d\big) &\longrightarrow  \big(  R_{k-1}(M), \overline{\xi}\big)\\
 [g]_0^{k-1} \quad & \longmapsto \; \;[\varphi \circ g]_0^{k-1}.
\end{align*}
On vérifie que la différentielle de $\varphi^{[k-1]}$ en $I\!d$ ne dépend que de $\xi$ et pas du choix de $\varphi$. On la note 
\begin{align*}{\xi}_*=d \varphi^{[k-1]}_{I\!d} :  T_{\!I\!d}\, R_{k-1}(\mathbb C^n)
 \longrightarrow  T_{\overline{\xi}} \,R_{k-1}(M)\,.
\end{align*}
On pose alors 
\begin{align*}
 \theta_{k}(v)= \big( {\xi}_* \big)^{-1} \big( d \pi_k^{k-1}( v)  \big)\,.
\end{align*}

Les formes différentielles canoniques des fibrés de repères  satisfont plusieurs propriétés  d'invariance et de compatibilité ({\it cf.} \cite[\S 3]{kobayashiCAN}). Les deux propriétés des $\theta_k$ qui sont  le plus importantes ici nécessitent de faire quelques explications préliminaires. 
 Pour $k\geq 1$, on note ${\mathfrak g}_k(\mathbb C^n)$ l'algèbre de Lie (complexe) de $G_k(\mathbb C^n)$. 
L'action à droite de ce groupe de Lie sur $R_k(M)$ induit une application injective $X\mapsto X^  \dagger$ de  $\mathfrak g_k(\mathbb C^n)$ à valeurs dans l'algèbre des champs de vecteurs sur $R_k(M)$. D'autre part, la différentielle en l'identité de la projection {$G_k(\mathbb C^n)\rightarrow 
G_{k-1}(\mathbb C^n)$ induit un épimorhisme $\mathfrak g_{k}(\mathbb C^n)\ni X\mapsto X'\in \mathfrak g_{k-1}(\mathbb C^n)$.  Enfin, la projection $\pi_k^0:R_k(\mathbb C^n)\rightarrow R_0(\mathbb C^n)=\mathbb C^n$ induit une trivialisation  $T R_{k-1}(\mathbb C^n) = T \mathbb C^n \times \mathfrak g_{k-1}(\mathbb C^n)$, d'où on déduit une injection naturelle de $ \mathfrak g_{k-1}(\mathbb C^n)$ dans $  T_{\!I\!d} \, R_{k-1}(\mathbb C^n)$ . 
 Ces remarques faites, on vérifie alors que pour tout $X \in \mathfrak g_k(\mathbb C^n)$, on a 
\begin{align}
\label{E:thetakPROP1}
 \theta_k(X^  \dagger)\equiv X'.
\end{align}

Fixons maintenant $\Gamma \in D(\mathbb C^n,0)$. On peut lui associer le germe de biholomorphisme 
\begin{align*}
 \big( R_{k-1}(\mathbb C^n), I\!d \,  \big)& \longrightarrow  R_{k-1}(\mathbb C^n) \\
[\varphi]_0^{k-1}\; \,  & \longmapsto \big[\Gamma \circ \varphi \circ \Gamma^{-1}\big]_0^{k-1} 
\end{align*}
dont on vérifie que la différentielle en $I\!d$ ne dépend que de $\gamma=[\Gamma]_0^{k}\in G_k(\mathbb C^n)$. Notons la ${\rm Ad}(\gamma)$. Alors $ 
{\rm Ad}:\gamma \mapsto {\rm Ad}(\gamma)$ définit 
l'{\it action adjointe} (à droite) de  $G_{k}(\mathbb C^n)$ sur $T_{\!I\!d}\, R_{k-1}(\mathbb C^n)$.  Cette définition étant posée, on vérifie que pour tout  $\gamma \in G_k({\mathbb C}^n)$, on a 
\begin{align}
\label{E:thetakPROP2}
 R_\gamma^*(\theta_k)={\rm Ad}(\gamma^{-1})\cdot \theta_k \, .
 \end{align}

Le cas $k=2$ est particulièrement important et particulier. En effet, on a des identifications naturelles 
\begin{equation}
 \label{E:theta2}
 T_{\!I\!d}\, R_{1}(\mathbb C^n)\simeq \mathfrak{Aff}(\mathbb C^n) \simeq \mathbb C^n \oplus{\mathfrak{ gl}}_n(\mathbb C). 
\end{equation}
Il apparaît donc que la seconde forme canonique $ \theta_2$ est à valeurs dans une algèbre de Lie. Soient $(e_1,\ldots,e_n)$ et $(E_i^j)_{i,j=1}^n$ les bases canoniques de $\mathbb C^n$ et ${\mathfrak{ gl}}_n(\mathbb C)\simeq M_n(\mathbb C)$ respectivement. On peut alors décomposer $\theta_2$ composante par composante dans ces bases en tenant en compte de la somme directe (\ref{E:theta2}). On note 
$$ \theta_2=\sum_{i=1}^n \theta^ie_i+ \sum_{i,j=1}^n \theta^i_j E_i^j .$$

Si $\varphi: (\mathbb C^n,0)\rightarrow U \subset \mathbb C^n$ est un biholomorphisme, on a au second ordre 
$$ \varphi(x^1,\ldots,x^n) =\sum_{i=1}^n \Big(   
\varphi^i + \sum_{j=1}^n \varphi^i_j  x^j + \frac{1}{2} \sum_{j,k=1}^n \varphi^i_{jk} x^j x^k \Big)\,e_i $$ 
avec $(\varphi^i_j)_{i,j=1}^n$ inversible et où les $\varphi^i_{jk}$ sont symétriques en les indices $j$ et $k$. 
On en déduit que les $(\varphi^i, \varphi^i_j, \varphi^i_{jk})$  forment un système de coordonnées locales sur $R_2(U)$ dans lesquelles la fibration   $R_2(U)\rightarrow U$ n'est rien d'autre que la projection 
$ (\varphi^i, \varphi^i_j, \varphi^i_{jk}) \longmapsto (\varphi^i)$. On peut alors  exprimer (les composantes $\theta^i$ et $\theta^i_j$ de) la seconde forme canonique $\theta_2$  dans ces coordonnées. En notant $(\psi^i_j)_{i,j=1}^n$ l'inverse de $(\varphi^i_j)_{i,j=1}^n$,  on a ({\it cf.} \cite[p. 141]{kobayashi}): 
\begin{equation*}
\qquad\qquad
\theta^i=\sum_{j=1}^n \psi^i_j d \varphi^j \qquad \mbox{ et } \qquad 
\theta^i_j
=\sum_{k=1}^n \psi^i_k d \varphi^k_j+ \sum_{k,\ell,m=1}^n  \psi^i_k \varphi^k_{\ell j} \psi^\ell_m d \varphi^m.
\end{equation*}
Un calcul direct permet alors de montrer que, pour tout $i=1,\ldots,n$, la composante $\theta^i$ de $\theta_2$ vérifie  
\begin{equation}
\label{E:maurercartanformecan}
 d \theta^i=-\sum_{j=1}^n \theta_j^i \wedge  \theta^j \, . 
\end{equation}

\subsubsection{ Connexion projective à la Cartan} On note (avec un décalage d'indice) $e^0,\ldots,e^n$ la base canonique de $\mathbb C^{n+1}$ (ainsi $e^0=(1,0\ldots,0)$, $e^1=(0,1,0\ldots,0)$, etc). 
On a une décomposition en somme directe $\mathfrak g=  
\mathfrak g_{-1} \oplus 
\mathfrak g_0\oplus 
\mathfrak g_1$
de l'algèbre de Lie complexe 
$\mathfrak g= {\mathfrak sl}_{n+1}( \mathbb C)$ avec 
\begin{align*}
 \mathfrak g_{-1}  = 
& \, \left\{ 
\begin{pmatrix}
 0 & 0 \\
\xi & 0
\end{pmatrix} \; \Big| \; 
 \xi \in M_{n\times  1}(\mathbb C)     \right\}    \simeq   \, \mathbb C^n
\\
\mathfrak g_{0}  = & \, \left\{ 
\begin{pmatrix}
 a & 0 \\
0 & A
\end{pmatrix} \; \Big| 
\begin{tabular}{c}
 $A\in M_{n\times n}(\mathbb C)$\\
$a=-{\rm Tr}(A)  $
\end{tabular}\hspace{-0.15cm}
  \right\}    \simeq   \, 
\mathfrak{ gl}_n(\mathbb C) \\
\mbox{ et } \quad \mathfrak g_{1}  = 
& \, \left\{ 
\begin{pmatrix}
 0 & v \\
0 & 0
\end{pmatrix} \; \Big| \; 
v\in M_{1\times n}(\mathbb C)     \right\}    \simeq   \, \mathbb C^n.
\end{align*}
On pose également 
\[\mathfrak h=\mathfrak g_{0} \oplus 
\mathfrak g_1
=  \, \left\{ 
\begin{pmatrix}
 a & v \\
0 & A
\end{pmatrix} \; \Bigg| 
\begin{tabular}{c}
 $A\in M_{n\times n}(\mathbb C)$\\
$ v \in M_{1 \times n}(\mathbb C)  $\\
$    a=-{\rm Tr}(A)  $
\end{tabular}\hspace{-0.15cm}
  \right\}    \simeq   \, \mathbb C^n\oplus
\mathfrak{ gl}_n(\mathbb C) .
\]

C'est l'algèbre de Lie du sous-groupe de Lie $H$ de $ G=SL_{n+1}(\mathbb C)$ formé des éléments $g\in G$ laissant invariante la droite  
engendrée par $ e^0$.  Il en découle que le quotient $ G/H$  est l'espace projectif $\mathbb P^n(\mathbb C)$. Celui-ci est donc un espace homogène que l'on munit de la géométrie de Klein associée à la paire $(\mathfrak g, \mathfrak h)$. Par définition, une {\it connexion de Cartan projective} sur une variété $M$ est une connexion de Cartan  sur $M$ associée à une géométrie de Cartan modelée sur $(\mathfrak g, \mathfrak h)$.

Soit $\omega$ une connexion de Cartan projective. 
En tenant compte de la graduation  $\mathfrak{sl}_{n+1}(\mathbb C)
=\mathfrak g=\mathfrak g_{-1} \oplus \mathfrak g_0\oplus \mathfrak g_1$, on peut écrire matriciellement 
\[
\omega=
\begin{pmatrix}
 \omega_0 & \omega_i  \\
\omega^k & \omega_i^k
\end{pmatrix} 
 \]
où les 1-formes $\omega_0$, $\omega^k$, $\omega_i$ et $\omega^k_i$ (pour $i,k=1,\ldots,n$) vérifient $\omega_0=-\sum_{i=1}^n \omega^i_i=0$.  Alors on a 
\begin{equation}
\label{E:matriceOMEGA}
 \Omega= d\omega+\omega\wedge \omega = 
\begin{pmatrix}
 \Omega_0 & \Omega_i  \\
\Omega^k & \Omega_i^k
\end{pmatrix} =\begin{pmatrix}
 d\omega_0 + \omega_j \wedge \omega^j & d\omega_i  +\omega_0\wedge \omega_i +  \omega_j \wedge \omega_i^j\\
d\omega^k +\omega^k \wedge \omega_0+ \omega^k_j\wedge \omega^j
& d\omega_i^k+\omega^j\wedge \omega_j + \omega_i^j\wedge \omega_j^k
\end{pmatrix}.
\end{equation}
 Par définition, $\omega$ est sans torsion si  la 2-forme   $\rho(\Omega)$  à valeurs dans $\mathfrak g/\mathfrak h \simeq \mathfrak g_{-1}$ est identiquement nulle.  Cela équivaut à ce que $\Omega^k=0$ quel que soit $k=1,\ldots,n$.

 Il découle des propriétés d'invariance d'une connexion projecive  $\omega$ définie sur l'espace total $P$ d'un $H$-fibré principal au dessus de $U$ que celle-ci est complètement déterminée par sa restriction à n'importe quelle sous-variété $V\subset P$ de dimension $n$ transverse à la fibration $P\rightarrow U$. Cette remarque justifie l'astuce classique (due à Cartan)  consistant à utiliser une {\it jauge}, à savoir une section (locale) $ \sigma$ du fibré principal $P$, pour ramener l'étude de $\omega$ sur $P$ à celle de la {\it forme de soudure} $\omega_\sigma$ (associée à $\sigma$) sur $U$, qui est    définie comme étant le tiré-en-arrière $\omega_\sigma= \sigma^*(\omega) \in \Omega^1_U \otimes {\mathfrak{sl}_{n+1}(\mathbb C)}$. On impose aussi à $\sigma$ d'être telle  que la 1-forme $\overline{\omega}_\sigma=\rho( \omega_\sigma)\in 
\Omega^1_U \otimes (\mathfrak{g}/ \mathfrak{h})  $ 
induise un isomorphisme de $T_{U,u}$ sur $\mathfrak{g}/ \mathfrak{h}  $ quel que soit $u\in U$. 

Si $\widehat{\sigma}$  est une autre jauge, il existe une application $h:U\rightarrow H$ telle que $\sigma=\widehat{\sigma} \cdot h$. 
Par hypothèse, on a $R_k^*(\omega)=Ad(k^{-1})\cdot \omega$ pour tout $k\in H$ (puisque $\omega$ est une connexion de Cartan). On en déduit la relation 
\begin{equation}
\label{E:changementsoudure}
 \omega_{\hat{\sigma}}=Ad(h^{-1})\cdot \omega_\sigma+h^*( \omega_H)
\end{equation}
où l'on a noté $\omega_H$ la forme de Maurer-Cartan de $H$. 
\begin{prop}Les coordonnées $x^1,\ldots,x^n$ étant fixées, il existe essentiellement un unique choix de jauge $\sigma$ (que l'on dira être {\og la\fg}  {\it jauge standard} relativement aux $x^i$) telle que la forme de soudure associée soit de la forme 
\begin{equation}
\label{E:soudurestandard}
\omega_\sigma=
\begin{pmatrix}
 0 & \omega_i \\
dx^k &  \omega^k_i
\end{pmatrix} . 
\end{equation}
Le terme {\og  essentiellement \fg} signifie ici que deux jauges standards $\sigma_1$ et $\sigma_2$ sont liées par une relation de la forme 
$\sigma_2=\sigma_1 \cdot (\zeta\, {Id}_{n+1})$  où $\zeta$ désigne un  nombre complexe  tel que 
$\zeta^{n+1}=1$. 
\end{prop}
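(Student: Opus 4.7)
L'approche consiste \`a construire explicitement, \`a partir d'une jauge initiale arbitraire $\sigma_0$, une transformation de jauge $h:U\to H$ ramenant la forme de soudure \`a la forme normale \eqref{E:soudurestandard}, puis \`a d\'eterminer la libert\'e r\'esiduelle. \'Ecrivons la forme de soudure de $\sigma_0$ sous la forme
\[
\omega_{\sigma_0}=\begin{pmatrix} \widetilde\omega_0 & \widetilde\omega_i\\ \widetilde\omega^k & \widetilde\omega^k_i\end{pmatrix}.
\]
La condition de Cartan impose que $(\widetilde\omega^1,\ldots,\widetilde\omega^n)$ soit un corep\`ere sur $U$, ce qui permet d'\'ecrire $\widetilde\omega^k=M^k_\ell\,dx^\ell$ pour une matrice holomorphe inversible $M=(M^k_\ell)$. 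Posons $h=\begin{pmatrix}\lambda & v\\ 0 & B\end{pmatrix}:U\to H$. En d\'ecomposant $Ad(h^{-1})$ selon la graduation $\mathfrak g=\mathfrak g_{-1}\oplus \mathfrak g_0\oplus \mathfrak g_1$ et en exploitant \eqref{E:changementsoudure}, un calcul matriciel direct donne que la colonne inf\'erieure gauche de la nouvelle forme de soudure est $\lambda B^{-1}\widetilde\omega$ et que son terme sup\'erieur gauche vaut $\widetilde\omega_0-v B^{-1}\widetilde\omega+\lambda^{-1}d\lambda$. Pour normaliser la premi\`ere en $(dx^k)$, on impose $B=\lambda M$; combin\'ee \`a la contrainte $\lambda\det B=1$, cette condition se r\'e\'ecrit $\lambda^{n+1}=(\det M)^{-1}$, \'equation admettant sur le domaine simplement connexe $U$ exactement $n+1$ solutions holomorphes diff\'erant par une racine $(n+1)$-i\`eme de l'unit\'e. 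Une fois $\lambda$ et $B$ fix\'es, annuler le terme sup\'erieur gauche devient une unique \'equation lin\'eaire en le vecteur-ligne $v$, qui se r\'esout composante par composante dans le corep\`ere $(dx^\ell)$; on obtient ainsi une jauge standard.

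Pour l'unicit\'e essentielle, soient $\sigma_1$ et $\sigma_2$ deux jauges standard et soit $k=\begin{pmatrix}\lambda_k & v_k\\ 0 & B_k\end{pmatrix}:U\to H$ telle que $\sigma_1=\sigma_2\cdot k$. Les m\^emes formules appliqu\'ees \`a $\sigma_2$ montrent que la pr\'eservation de la colonne $(dx^k)$ force $\lambda_k B_k^{-1}=Id_n$, d'o\`u $B_k=\lambda_k Id_n$ et $\lambda_k^{n+1}=1$; $U$ \'etant connexe et $\lambda_k$ holomorphe, $\lambda_k\equiv \zeta$ est une racine $(n+1)$-i\`eme de l'unit\'e constante. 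La pr\'eservation du terme sup\'erieur gauche se r\'eduit alors \`a $-\zeta^{-1}v_{k,\ell}\,dx^\ell=0$, ce qui entra\^ine $v_k=0$. On conclut que $k\equiv \zeta\, Id_{n+1}$ avec $\zeta^{n+1}=1$, ce qui est bien l'assertion voulue.

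L'obstacle principal de la preuve r\'eside dans l'identification correcte des composantes de $Ad(h^{-1})\cdot \omega_\sigma$ et de $h^*\omega_H$ dans la d\'ecomposition gradu\'ee $\mathfrak g=\mathfrak g_{-1}\oplus\mathfrak g_0\oplus\mathfrak g_1$; une fois ce calcul matriciel effectu\'e, existence et unicit\'e essentielle se r\'eduisent toutes deux \`a r\'esoudre des \'equations lin\'eaires ponctuellement sur $U$, l'ambigu\"it\'e par un scalaire $\zeta$ racine $(n+1)$-i\`eme de l'unit\'e traduisant pr\'ecis\'ement le fait que le sous-groupe $\{\zeta\,Id_{n+1}\mid \zeta^{n+1}=1\}$ agit trivialement par conjugaison sur $\mathfrak{sl}_{n+1}(\mathbb C)$.
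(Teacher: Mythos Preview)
Your argument is correct. The paper actually states this proposition without proof (Section~\ref{S:connexionprojo} is an expository review of projective connections, and this normal-form statement is taken for granted), so there is no proof in the paper to compare against. Your approach---computing explicitly the $\mathfrak g_{-1}$-- and $(1,1)$--components of $Ad(h^{-1})\cdot\omega_{\sigma_0}+h^*\omega_H$ for $h=\begin{pmatrix}\lambda & v\\ 0 & B\end{pmatrix}\in H$ and then solving successively for $B$, $\lambda$, and $v$---is the standard one, and the computations are right: the $\mathfrak g_{-1}$--part transforms as $\lambda B^{-1}\widetilde\omega$, the $(1,1)$--entry as $\widetilde\omega_0 - vB^{-1}\widetilde\omega + \lambda^{-1}d\lambda$, whence $B=\lambda M$, $\lambda^{n+1}=(\det M)^{-1}$, and $v$ is determined by a pointwise linear equation. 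The uniqueness argument is equally clean.

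Two minor remarks. First, the fact that $(\widetilde\omega^1,\ldots,\widetilde\omega^n)$ is a coframe is not strictly a consequence of the Cartan-connection axioms on $P$ but rather of the additional requirement the paper imposes on gauges just before \eqref{E:changementsoudure} (that $\rho(\omega_\sigma)$ induce an isomorphism $T_{U,u}\to\mathfrak g/\mathfrak h$); this is only a point of attribution. Second, in the uniqueness part you implicitly use that the transition map $k$ between two holomorphic sections is itself holomorphic, which is what makes $\lambda_k$ holomorphic and hence constant once it lands in the discrete set $\{\zeta:\zeta^{n+1}=1\}$; this is immediate but worth saying.
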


Notons  $\omega_i^k=\sum_{j=1}^n \omega_{ij}^k dx^j$  les décompositions en composantes scalaires des coefficients $\omega_{i}^k$ de la matrice (\ref{E:soudurestandard}) d'une forme de soudure standard
dans les coordonnées $x^1,\ldots,x^n$. Ils vérifient $\sum_{i=1}^n \omega_{ij}^i=0$ pour tout $j$. De (\ref{E:matriceOMEGA}) et 
(\ref{E:soudurestandard}) il vient que $\omega$ est sans torsion si et seulement si $\omega_{ij}^k=\omega^k_{ji}$ quels que soient $i,j,k=1,\ldots,n$.  

 On suppose désormais $\omega$ sans torsion dans tout ce qui suit.\vspace{0.15cm} 

Si  $\overline{\sigma}$ désigne ``la'' jauge standard relativement à un autre système de coordonnées $\overline{x}^1,\ldots,\overline{x}^n$  sur $U$, il existe une application $h:U\rightarrow H$ (essentiellement unique) telle que $\overline{\sigma}={\sigma}\cdot h$, que l'on peut expliciter ({\it cf.} \cite[\S3.2]{cs}). Utilisant (\ref{E:changementsoudure}), on peut alors établir le 
\begin{coro} Pour $\alpha,\beta,\gamma=1,\ldots,n$, 
 la composante scalaire $\overline{\omega}_{\alpha\beta}^\gamma$ (dans les coordonnées $\overline{x}^1,\ldots,\overline{x}^n$) du coefficient  $\overline{\omega}_{\alpha}^\gamma$ de ``la'' forme de soudure standard
$ \omega_{\overline{\sigma}}$ s'obtient à partir des composantes scalaires $\omega_{ij}^k$ selon les formules de transformations (\ref{E:transocoeffTHOMAS}). 
\end{coro}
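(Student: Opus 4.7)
The proof is a direct block-matrix computation based on the gauge-change formula (\ref{E:changementsoudure}), applied to the essentially unique $H$-valued function $h: U \to H$ for which $\overline{\sigma} = \sigma \cdot h$. First I would pin down $h$. Writing $h$ in block form, with $\mathfrak{gl}_n$-block $A$ (and corresponding scalar entry $a = -\mathrm{Tr}(A)$) and $\mathfrak{g}_1$-row $v$, the requirement that the lower-left block of $\omega_{\overline{\sigma}}$ equal $d\overline{x}^\gamma$ (rather than $dx^k$) forces $A$ to be, up to the scalar imposed by $\det h = 1$, the inverse Jacobian matrix $(\partial x^i / \partial \overline{x}^\alpha)$; the determinant normalization brings in a power of $\Delta = \det(\partial x^i/\partial \overline{x}^\alpha)$. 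The $\mathfrak{g}_1$-row $v$ is then fixed by the vanishing of the $(0,0)$-entry of $\omega_{\overline{\sigma}}$, and turns out to involve the 1-form $d\log \Delta$. The explicit formula for $h$ is recorded in \cite[\S 3.2]{cs}.

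Next I would expand $\omega_{\overline{\sigma}} = Ad(h^{-1}) \cdot \omega_\sigma + h^*(\omega_H)$ block by block and focus on the diagonal block $\overline{\omega}_\alpha^\gamma$, reading off its scalar coefficients against the basis $(d\overline{x}^\beta)$ to identify the $\overline{\omega}_{\alpha\beta}^\gamma$. Three pieces appear. The adjoint term $Ad(h^{-1}) \cdot \omega_\sigma$ sandwiches the coefficient matrix $(\omega_i^k)$ between the Jacobian and its inverse; combined with the chain rule $dx^j = (\partial x^j/\partial \overline{x}^\beta)\, d\overline{x}^\beta$, this reproduces the tensorial term $\omega_{ij}^k\,(\partial x^i/\partial \overline{x}^\alpha)(\partial x^j/\partial \overline{x}^\beta)(\partial \overline{x}^\gamma/\partial x^k)$ of (\ref{E:transocoeffTHOMAS}). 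The exact-differential piece $A^{-1}\,dA$ inside $h^*(\omega_H)$ yields the second-derivative term $(\partial^2 x^i/\partial \overline{x}^\alpha \partial \overline{x}^\beta)(\partial \overline{x}^\gamma/\partial x^i)$, exactly as in the classical Christoffel transformation (\ref{E:transfochristofell}). The $\mathfrak{g}_1$-part of $h^*(\omega_H)$, which by the first step carries the $d\log \Delta$ factor, contributes precisely the corrective term $-\tfrac{1}{n+1}\bigl(\delta_\alpha^\gamma\, \partial_\beta \log \Delta + \delta_\beta^\gamma\, \partial_\alpha \log \Delta\bigr)$ that distinguishes Thomas coefficients from Christoffel symbols.

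Summing the three contributions produces exactly the transformation law (\ref{E:transocoeffTHOMAS}). The main obstacle is the trace-correction bookkeeping in the first step: verifying that the $\mathfrak{g}_1$-entry $v$ of $h$ indeed carries a factor of $d\log \Delta$ requires combining the $\mathfrak{sl}_{n+1}$-trace constraint $a = -\mathrm{Tr}(A)$ with the vanishing of the top-left entry of the new soldering form. Once this is established, the remaining block-matrix manipulations are mechanical, and one obtains an alternative, purely Cartan-theoretic derivation of the classical transformation law for Thomas coefficients already encountered in \S \ref{S:viaconnexionaffine}.
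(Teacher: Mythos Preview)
Your approach is exactly the one the paper sketches: determine the transition $h$ explicitly (the paper simply cites \cite[\S3.2]{cs}) and expand the gauge-change formula (\ref{E:changementsoudure}) block by block. The overall strategy is correct and there is nothing to add on that level.

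There is, however, one imprecision in your bookkeeping worth fixing. The $d\log\Delta$ correction to the $\mathfrak{g}_0$-block does \emph{not} come from ``the $\mathfrak{g}_1$-part of $h^*(\omega_H)$'': that part of $h^{-1}dh$ sits in the upper-right block and only affects the new $\overline{\omega}_\alpha$, not $\overline{\omega}_\alpha^\gamma$. The corrective term actually arises from two other places. First, the determinant constraint $\det h=1$ forces $A=\Delta^{-1/(n+1)}J$ with $J=(\partial x^i/\partial\overline{x}^\alpha)$, so that $A^{-1}dA=J^{-1}dJ-\tfrac{1}{n+1}(d\log\Delta)\,I_n$; this produces the second-derivative term \emph{together with} the $\delta_\alpha^\gamma$-half of the correction. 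Second, in the adjoint piece $Ad(h^{-1})\omega_\sigma$ the $\mathfrak{g}_{-1}$-block $dx^k$ combines with the row $v$ of $h$ to give a rank-one contribution $A^{-1}(dx)\,v$ to the $\mathfrak{g}_0$-block; since $v$ was fixed (via the vanishing of the new $(0,0)$-entry) in terms of $d\log a=-\tfrac{1}{n+1}d\log\Delta$, this yields the $\delta_\beta^\gamma$-half. With this reattribution the three contributions sum to (\ref{E:transocoeffTHOMAS}) as you claim.
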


On déduit des résultats précédents que les composantes scalaires $\omega_{ij}^k$ des coefficients  $\omega_{i}^k$ d'une forme de soudure standard d'une connexion de Cartan projective sans torsion peuvent être considérées comme des coefficients de Thomas. Vu la \PP \ref{P:connexionprojoTHOMAS}, il en découle qu'à une connexion de Cartan  projective sans torsion on peut associer de façon canonique une connexion projective au sens de la \DD \ref{D:defconnexionprojective}. \medskip

Tournons nous maintenant vers la question inverse: étant donnée une connexion projective au sens de la \DD \ref{D:defconnexionprojective}, peut-on lui associer  une connexion de Cartan  projective de façon canonique? 

La réponse est oui mais nécessite d'introduire la notion de  
connexion de Cartan  projective {\it normale}, ce que nous allons faire  maintenant en suivant \cite{sharpe} de près. Soit $\omega$ une connexion de Cartan projective sur un $H$-fibré principal $P\rightarrow M$ (on suppose toujours que $\omega$ est sans torsion). Alors sa courbure  peut être vue comme  un tenseur $\Omega: P\rightarrow {\rm Hom}\big(\wedge^2(\mathfrak g/\mathfrak h),\mathfrak h\big)$. 
Le passage au quotient $\mathfrak h\rightarrow \mathfrak h/\mathfrak g_{-1}\simeq \mathfrak g_0$ induit un morphisme (de $H$-module) 
\begin{align}
\mu_1: 
{\rm Hom}\big(\wedge^2(\mathfrak g/\mathfrak h),\mathfrak h\big)
\longrightarrow 
{\rm Hom}\big(\wedge^2(\mathfrak g/\mathfrak h),\mathfrak g_{0}\big)\,.
\end{align}
La représentation adjointe $ \mathfrak h\rightarrow  {\rm End}( \mathfrak g/\mathfrak h)$ induit un isomorphisme ${\rm ad}: \mathfrak h/\mathfrak g_{-1} \rightarrow {\rm End}( \mathfrak g/\mathfrak h)$ ({\it cf.} \cite[Lemma 8.1.7]{sharpe} d'où on déduit un isomorphisme 
\begin{align}
\mu_2: 
{\rm Hom}\big(\wedge^2(\mathfrak g/\mathfrak h),\mathfrak g_{0}\big)
\simeq 
\wedge^2(\mathfrak g/\mathfrak h)^* \otimes \mathfrak g_{0}
\stackrel{{\rm Id}\otimes {\rm ad}}{\longrightarrow}
\wedge^2(\mathfrak g/\mathfrak h)^* \otimes {\rm End}( \mathfrak g/\mathfrak h)\,.
\end{align}
On considère alors 
\begin{align}
\mu_3: 
\wedge^2(\mathfrak g/\mathfrak h)^* \otimes {\rm End}( \mathfrak g/\mathfrak h) \simeq     
\wedge^2(\mathfrak g/\mathfrak h)^* \otimes
(\mathfrak g/\mathfrak h)^*\otimes(\mathfrak g/\mathfrak h) & \; 
\longrightarrow \;  (\mathfrak g/\mathfrak h)^*\otimes(\mathfrak g/\mathfrak h)^* \nonumber \\
(A^*\wedge B^*)\otimes C^*\otimes \eta \; 
& \longmapsto \; ( B^*(\eta)A^*-A^*(\eta)B^*)\otimes  C^*\,.
\end{align}
On définit alors le {\it sous-$H$-module normal} $N$  de ${\rm Hom}\big(\wedge^2(\mathfrak g/\mathfrak h),\mathfrak h\big)$ comme étant 
$$ N={\rm Ker}\Big(
{\rm Hom}\big(\wedge^2(\mathfrak g/\mathfrak h),\mathfrak h\big) \stackrel{ \mu_3\circ \mu_2 \circ \mu_1  }{\longrightarrow}  (\mathfrak g/\mathfrak h)^*\otimes(\mathfrak g/\mathfrak h)^*\Big)\, . $$
Conformément à la {\og philosophie\fg} évoquée plus haut, on pose alors la
\begin{defi}
 Une connexion de Cartan projective sans torsion $\omega$  est {\it normale} si  sa courbure est à valeurs dans $N$.
\end{defi}
Il n'est pas difficile d'expliciter  en coordonnées cette notion de normalité.  Tout d'abord, on a le 
\begin{lemm}\cite[Prop. 2]{kobayashinagano}
\label{L:formcourbure}
La matrice de courbure (\ref{E:matriceOMEGA}) admet une décomposition de la forme 
\begin{equation}
 \Omega=\frac{1}{2}\sum_{u,v=1}^n K_{uv}\, \omega^u\wedge \omega^v
\end{equation}
où les $K_{uv}$ sont des applications définies sur $P$ et à valeurs dans $\mathfrak g$, antisymétriques en les indices $u$ et $v$.
\end{lemm}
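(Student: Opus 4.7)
La strat�gie naturelle pour �tablir ce lemme consiste � d�montrer l'{\it horizontalit�} de la courbure $\Omega$ sur le fibr� principal $P\to M$, puis � exploiter le corep�re global sur $P$ fourni par $\omega$. Je proc�derais en trois temps.

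Premi�re �tape: �tablir que $\iota_{X^\dagger}\Omega=0$ pour tout $X\in \mathfrak h$. C'est un calcul court qui n'utilise que les axiomes 1., 2., 3. d'une connexion de Cartan. En effet, la condition $\omega(X^\dagger)\equiv X$ (constante sur $P$) combin�e � la formule de Cartan donne $\iota_{X^\dagger}d\omega=\mathcal L_{X^\dagger}\omega$, tandis que la propri�t� d'�quivariance $R_h^*\omega=\mathrm{Ad}(h^{-1})\cdot\omega$ implique par d�rivation en $h=\mathrm{Id}$ l'identit� $\mathcal L_{X^\dagger}\omega=-[X,\omega]$. D'autre part, un calcul direct donne $\iota_{X^\dagger}(\omega\wedge \omega)=[X,\omega]$. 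En sommant, on obtient bien $\iota_{X^\dagger}\Omega=\iota_{X^\dagger}(d\omega+\omega\wedge \omega)=0$.

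Deuxi�me �tape: exploiter l'axiome 1. selon lequel $\omega_p:T_pP\stackrel{\sim}{\longrightarrow}\mathfrak g$ est un isomorphisme ponctuel. Les composantes scalaires $(\omega^k,\omega^k_i,\omega_i)_{1\le i,k\le n}$ forment alors un corep�re global sur $P$, dans lequel toute 2-forme, et en particulier $\Omega$, se d�compose. Dans la base duale, les champs fondamentaux $X^\dagger$ associ�s � une base de $\mathfrak h=\mathfrak g_0\oplus \mathfrak g_1$ s'identifient aux vecteurs duaux des formes $\omega^k_i$ et $\omega_i$, et annulent les $\omega^k$. En contractant successivement $\Omega$ avec ces champs fondamentaux, la conclusion de la premi�re �tape impose que toutes les composantes de $\Omega$ faisant intervenir au moins un facteur $\omega^k_i$ ou $\omega_i$ doivent �tre identiquement nulles.

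Troisi�me �tape: il ne subsiste donc que les termes en $\omega^u\wedge \omega^v$, et quitte � antisym�triser les coefficients on peut �crire $\Omega=\frac{1}{2}\sum_{u,v=1}^n K_{uv}\,\omega^u\wedge \omega^v$ avec $K_{uv}:P\to \mathfrak g$ antisym�triques en $(u,v)$, ce qui est le r�sultat souhait�. L'unique subtilit� r�side dans le calcul d'horizontalit� de la premi�re �tape, qui est toutefois un r�sultat standard du formalisme des connexions de Cartan; tout le reste est une cons�quence imm�diate de l'existence du corep�re canonique fourni par $\omega$ et n'utilise ni la sym�trie (absence de torsion) ni la normalit� de~$\omega$.
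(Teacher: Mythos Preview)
Ta d�monstration est correcte et suit exactement l'argument classique d'horizontalit� de la courbure d'une connexion de Cartan. Le papier lui-m�me ne donne pas de preuve mais renvoie simplement � \cite[Prop.~2]{kobayashinagano} et � \cite[p.~137]{kobayashi}; ton calcul en trois temps (annulation de $\iota_{X^\dagger}\Omega$ via Cartan + �quivariance, puis exploitation du corep�re induit par $\omega$) est pr�cis�ment celui que l'on trouve dans ces r�f�rences, et la remarque qui suit le lemme dans l'article souligne d'ailleurs que l'argument ne d�pend que de la d�composition $\mathfrak g=\mathfrak g_{-1}\oplus\mathfrak h$, ce que tu as bien identifi�.
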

\begin{rema}
 Ce résultat n'est pas spécifique aux connexions de Cartan projectives mais est vérifié par toute connexion de Cartan $\omega$ subordonnée à un espace généralisé modelé sur une paire $(\mathfrak l,  \mathfrak l')$ lorsque l'algèbre de Lie $\mathfrak l $ admet une décomposition en somme directe  $\mathfrak l=
\mathfrak l_{-1} \oplus \mathfrak l_0 \oplus  \cdots \oplus  \mathfrak l_k$ telle que 
$\mathfrak l'=\mathfrak l_0 \oplus  \cdots \oplus  \mathfrak l_k$.  Pour des indications concernant la preuve, voir \cite[p.137]{kobayashi} ou bien \cite[Proposition 2]{kobayashinagano} pour le cas projectif. 
\end{rema}
D'après le lemme ci-dessus, on peut noter   $\Omega_i^k=\frac{1}{2}\sum_{j=1}^n K_{iuv}^k \omega^u\wedge \omega^v$ (pour $i,k=1,\ldots,n$) la décomposition en composantes scalaires (antisymétriques en les indices inférieurs $u$ et $v$) du coefficient  $\Omega_i^k$ de la matrice de courbure (\ref{E:matriceOMEGA}). On vérifie alors  le 
\begin{lemm}
\label{L:normal}
  La connexion de Cartan sans torsion $\omega$  est normale si et seulement 
\[
 \Omega_0=0 \qquad \mbox{ et } \qquad \sum_{k=1}^n K^k_{ikv}=0
\quad \mbox{pour  tout } \,i,v=1,\ldots,n.
\]
\end{lemm}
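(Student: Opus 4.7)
Le plan est de traduire la condition de normalit\'e $\mathfrak{C}(\omega)\in N$ en coordonn\'ees, en explicitant la composition $\mu_3\circ\mu_2\circ\mu_1$ sur la d\'ecomposition en blocs (\ref{E:matriceOMEGA}) de $\Omega$, puis d'utiliser la premi\`ere identit\'e de Bianchi pour d\'ecoupler la relation ainsi obtenue sous la forme annonc\'ee.

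Je commencerais par observer que l'hypoth\`ese d'absence de torsion entra\^{\i}ne $\Omega^k\equiv 0$, donc $\Omega$ est \`a valeurs dans $\mathfrak h$, et la composition $\mu_3\circ\mu_2\circ\mu_1$ s'applique directement \`a la fonction courbure. L'application $\mu_1$ conserve simplement le bloc diagonal $(\Omega_0,\Omega^k_i)$ en oubliant $\Omega_i\in\mathfrak g_1$. Pour $\mu_2$, un calcul direct de l'action adjointe montre qu'un \'el\'ement $\mathrm{diag}(a)\oplus A$ de $\mathfrak g_0$ (avec $a=-\mathrm{Tr}(A)$) agit sur $\mathfrak g/\mathfrak h\simeq\mathbb C^n$ par la matrice $A+\mathrm{Tr}(A)\,\mathrm{Id}$; composant avec $\mu_1$, on obtient donc comme composantes
\[ T^k_{iuv}=K^k_{iuv}+\Big(\sum_{j} K^j_{juv}\Big)\delta^k_i. \]
En appliquant ensuite la formule explicite de $\mu_3$ \`a $T^k_{iuv}$ et en utilisant l'antisym\'etrie de $K^k_{iuv}$ en $(u,v)$, un court calcul ram\`ene la condition de normalit\'e au syst\`eme unique
\[ (\star)\qquad \sum_{v} K^v_{iuv}+\sum_{j} K^j_{jui}=0\qquad (1\leq i,u\leq n). \]

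Reste \`a d\'ecoupler $(\star)$ en les deux conditions annonc\'ees. Pour cela, j'invoquerais la premi\`ere identit\'e de Bianchi: en diff\'erentiant $\Omega^k\equiv 0$ et en simplifiant \`a l'aide des \'equations de structure de (\ref{E:matriceOMEGA}), on obtient la relation entre 3-formes $\Omega_0\wedge\omega^k=\Omega^k_j\wedge\omega^j$. Une contraction classique fournit l'identit\'e scalaire
\[ (n-1)\,K_{0,uv}=\sum_{k} K^k_{u,vk}-\sum_{k} K^k_{v,uk}. \]
Substituant $(\star)$ dans cette identit\'e et utilisant la contrainte de trace $K_{0,uv}=-\sum_i K^i_{iuv}$ (qui traduit que $\Omega$ est \`a valeurs dans $\mathfrak{sl}_{n+1}$), on arrive \`a $(n+1)K_{0,uv}=0$, d'o\`u $\Omega_0=0$; alors $(\star)$ se r\'eduit \`a $\sum_{v} K^v_{iuv}=0$, ce qui, par antisym\'etrie de $K^k_{i,\cdot\cdot}$ en ses deux derniers indices, est exactement $\sum_k K^k_{ikv}=0$. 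La r\'eciproque est imm\'ediate par injection directe des deux conditions dans $(\star)$.

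L'obstacle principal anticip\'e est le d\'ecouplage final: $(\star)$ para\^{\i}t a priori plus faible que la conjonction des deux conditions de l'\'enonc\'e, et ce n'est qu'en invoquant la premi\`ere identit\'e de Bianchi --- laquelle d\'ecoule pr\'ecis\'ement de l'hypoth\`ese d'absence de torsion --- combin\'ee au fait alg\'ebrique que $n+1\neq 0$, que l'on peut les s\'eparer. Les autres \'etapes rel\`event du calcul multilin\'eaire direct.
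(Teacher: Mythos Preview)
Your argument is correct. The paper itself does not prove this lemma but simply refers the reader to \cite[p.~336]{sharpe} for the explicit computation; your proposal carries out precisely that computation---tracing $\mu_3\circ\mu_2\circ\mu_1$ in coordinates to obtain $(\star)$, then decoupling via the first Bianchi identity and the $\mathfrak{sl}_{n+1}$ trace constraint---and each step checks out, including the key numerical coincidence $(n-1)+2=n+1\neq 0$ that forces $\Omega_0=0$.
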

Le lecteur intéressé  pourra consulter \cite[p. 336]{sharpe} où les calculs sont explicités. 
\medskip

On explique maintenant comment la notion de connexion projective telle qu'elle est définie dans la Section \ref{S:viaconnexionaffine} peut être interprétée comme une connexion de Cartan projective.  Le groupe des transformations projectives de $\mathbb P^n$ qui laissent fixe  un même point est isomorphe au sous-groupe $H(\mathbb C^n)$ de $D( \mathbb C^n,0)$ 
formé par les applications inversibles de la forme 
$$ \big(x^1,\ldots,x^n\big)\longmapsto  \left( \frac{ \sum_{i=1}^n s_i^1 \,x^i } { 1+\sum_{i=1}^n s_i\,x^i  }\, ,\,\ldots\,  , \, \frac{  \sum_{i=1}^n s_i^n\,x^i} {  1+\sum_{i=1}^n s_i\,x^i  }\right).$$
On vérifie que la restriction de $j_0^k: D( \mathbb C^n,0) \rightarrow G_k({\mathbb C}^n)$   à $H(\mathbb C^n)$ est injective si $k\geq 2$. On désigne par $H_k({\mathbb C}^n)$ son image (qui est donc isomorphe à $H(\mathbb C^n)$) et on note $\mathfrak h_k(\mathbb C^n)$ l'algèbre de Lie associée.

On peut identifier les connexions affines symétriques sur $M$ avec les $G$-structure du second ordre $B_G\subset R_2(M)$ avec $G=GL_n(\mathbb C)=G_1(\mathbb C^n)<G_2(\mathbb C^n)$ ({\it cf.} \cite{kobayashinagano}).  On vérifie que deux connexions affines $\nabla$ et $\nabla'$ sur $M$ sont projectivement équivalentes si et seulement si les deux $G$-structures associées $B_G(\nabla)$ et $B_G(\nabla')$ vérifient 
$$  H_2(\mathbb C^n)\cdot B_G(\nabla)=H_2(\mathbb C^n)\cdot B_G(\nabla').
$$
On montre ainsi ({\it cf.} \cite[Proposition 10]{kobayashinagano}) que sur la variété $M$, il existe une correspondance biunivoque entre les connexions projectives (au sens de la \DD \ref{D:defconnexionprojective})   et les $H^2(\mathbb C^n)$-structures du second ordre. 

Fixons à partir de maintenant une connexion projective $\Pi$ sur $M$. On note $B_\Pi\subset R_2(M)$ la $H_2(\mathbb C^n)$-structure associée. Soit  $\varpi$ la restriction de $\theta_2$ à $B_\Pi$ que l'on considère vu (\ref{E:theta2}) comme un élément 
$$ \varpi\in \Omega^2\big(   B_\Pi  \big) \otimes  \Big(
\mathfrak g_{-1} \oplus \mathfrak g_0
   \Big) 
 $$
où l'on a procédé aux identifications naturelles  $\mathfrak{g}_{-1}\simeq
\mathbb C^n
$ et  $\mathfrak g_0\simeq \mathfrak{gl}_n(\mathbb C) $.  Soit  $ \varpi=\varpi_{-1}+\varpi_0$ la décomposition de $\varpi$ induite par  l'identification $\mathfrak{Aff}(\mathbb C^n)=
\mathfrak g_{-1} \oplus {\mathfrak g}_0$. En utilisant les relations (\ref{E:thetakPROP1}), (\ref{E:thetakPROP2}) et (\ref{E:maurercartanformecan}) satisfaites par la seconde forme canonique, 
on peut alors montrer le 
\begin{theo}[\'E. Cartan \cite{cartan}]
\label{T:thmdeCARTAN}
 Il existe une unique 1-forme $\varpi_{1} \in \Omega^1(B_\Pi)\otimes \mathfrak g_{1}$ telle que
\begin{enumerate}
 \item $\omega_\Pi:= \varpi_{-1}+\varpi_0+\varpi_1$ constitue une connexion de Cartan projective sur le $H$-fibré principal $B_\Pi$;
\item la torsion $\mathfrak t(\omega_\Pi)$  de $\omega_\Pi$ est identiquement nulle;
\item $\omega_\Pi$ est une connexion de Cartan  projective normale.
\end{enumerate}
\end{theo}
\begin{proof}
 Reprenant la preuve du Théorème 4.2 de  \cite{kobayashi}, on montre qu'il y a existence et unicité locale. L'unicité locale permet alors d'en déduire le théorème par recollement. 
\end{proof}
\bigskip

On peut  résumer le contenu de cette sous-section par le 
\begin{coro}
 La notion de connexion projective introduite dans la \DD \ref{D:defconnexionprojective} co\"incide avec la notion de connexion de Cartan projective normale sans torsion.
\end{coro}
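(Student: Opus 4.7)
L'essentiel du travail a en fait d�j� �t� effectu� dans les paragraphes pr�c�dents; il ne reste qu'� v�rifier que les deux constructions naturelles reliant les deux points de vue sont mutuellement inverses.

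\emph{Construction dans un sens.} Soit $\omega$ une connexion de Cartan projective normale sans torsion sur un $H$-fibr� principal $P\to M$. Pour chaque syst�me de coordonn�es $x^1,\ldots,x^n$ d�fini sur un ouvert $U\subset M$, la jauge standard $\sigma$ fournit une forme de soudure $\omega_\sigma$ de la forme (\ref{E:soudurestandard}), et la d�composition $\omega_i^k=\sum_j \omega_{ij}^k\,dx^j$ produit des coefficients $\omega_{ij}^k$ qui sont sym�triques en $i,j$ (car $\omega$ est sans torsion) et de trace nulle (car $\omega_0=-\sum_i\omega_i^i=0$). Le corollaire qui pr�c�de assure que, lors d'un changement de coordonn�es, ces coefficients se transforment selon (\ref{E:transocoeffTHOMAS}). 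Vu la \PP \ref{P:connexionprojoTHOMAS}, ils d�finissent donc une connexion projective $\Pi(\omega)$ au sens de la \DD \ref{D:defconnexionprojective}. L'ind�termination d'ordre $n+1$ dans le choix de la jauge standard n'affecte pas les $\omega_{ij}^k$ puisque l'action correspondante est par une homoth�tie scalaire $\zeta\,\mathrm{Id}_{n+1}$ qui pr�serve la partie $\mathfrak g_0$ de la forme de soudure.

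\emph{Construction dans l'autre sens.} Partant d'une connexion projective $\Pi$ sur $M$ au sens de la \DD \ref{D:defconnexionprojective}, le \TT \ref{T:thmdeCARTAN} de Cartan fournit une connexion de Cartan projective $\omega_\Pi$ sur le $H_2(\mathbb C^n)$-fibr� principal $B_\Pi\subset R_2(M)$, et celle-ci est caract�ris�e par les deux propri�t�s d'\^etre sans torsion et normale. On pose $\omega(\Pi)=\omega_\Pi$.

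\emph{Les deux constructions sont mutuellement inverses.} Montrons d'abord que $\Pi(\omega(\Pi))=\Pi$. Fixons des coordonn�es $x^1,\ldots,x^n$ sur un ouvert $U$. Les coefficients de Thomas de $\Pi$ d�finissent, comme d�crit en amont du \TT \ref{T:thmdeCARTAN}, la partie $\varpi_{-1}+\varpi_0$ de la connexion $\omega_\Pi$ construite par Cartan, dans la trivialisation associ�e aux coordonn�es $(x^i)$; par construction, la jauge standard de $\omega_\Pi$ relativement aux $x^i$ a pour coefficients $\omega_{ij}^k$ exactement les coefficients de Thomas $\Pi^k_{ij}$ de $\Pi$. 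On a donc bien $\Pi(\omega_\Pi)=\Pi$. R�ciproquement, soit $\omega$ une connexion de Cartan projective normale sans torsion. La connexion $\omega(\Pi(\omega))=\omega_{\Pi(\omega)}$ est   normale, sans torsion, et admet par construction les m�mes coefficients de soudure standards que $\omega$ dans tout syst�me de coordonn�es; l'unicit� affirm�e dans le \TT \ref{T:thmdeCARTAN} force l'identification $\omega_{\Pi(\omega)}=\omega$ (apr�s l'identification naturelle des $H$-fibr�s principaux sous-jacents, via l'isomorphisme $H\simeq H_2(\mathbb C^n)$ rappel� pr�c�demment). Ceci �tablit la bijectivit� cherch�e et ach�ve la preuve.

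Le point le moins trivial est en fait d�j� r�gl� par le \TT \ref{T:thmdeCARTAN}: c'est l'existence et surtout l'unicit� de la prolongation de $\varpi_{-1}+\varpi_0$ en une connexion de Cartan projective normale $\omega_\Pi$, qui assurent que la correspondance $\Pi\mapsto \omega_\Pi$ est bien d�finie et inverse de $\omega\mapsto \Pi(\omega)$.
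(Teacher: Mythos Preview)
Ta d\'emonstration est correcte dans son esprit et suit exactement l'approche que le papier laisse implicite : le corollaire y est pr\'esent\'e sans preuve, comme un simple r\'esum\'e de la sous-section (\og On peut r\'esumer le contenu de cette sous-section par le\ldots\fg), et tu ne fais qu'expliciter les deux fl\`eches et v\'erifier qu'elles sont inverses l'une de l'autre, ce qui est bien ce que le texte sugg\`ere.

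Un point m\'erite toutefois d'\^etre signal\'e. Dans la direction $\omega\mapsto\Pi(\omega)\mapsto\omega_{\Pi(\omega)}$, tu invoques l'unicit\'e du \TT \ref{T:thmdeCARTAN} pour conclure que $\omega_{\Pi(\omega)}=\omega$. Or cette unicit\'e est \'enonc\'ee pour la $1$-forme $\varpi_1$ sur le fibr\'e \emph{sp\'ecifique} $B_\Pi\subset R_2(M)$, alors que $\omega$ vit a priori sur un $H$-fibr\'e principal $P$ quelconque. Tu mentionnes bien \og l'identification naturelle des $H$-fibr\'es principaux sous-jacents\fg, mais tu ne dis pas comment elle s'obtient : il faut construire un isomorphisme de $H$-fibr\'es $P\simeq B_{\Pi(\omega)}$ compatible aux connexions. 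C'est un fait standard (la connexion de Cartan $\omega$ fournit, via $\omega^{-1}$ restreint \`a $\mathfrak g_{-1}\oplus\mathfrak g_0$, une application de $P$ dans $R_2(M)$ dont l'image est pr\'ecis\'ement $B_{\Pi(\omega)}$), mais ce n'est pas compl\`etement trivial et m\'eriterait une phrase de justification. Sans cela, l'argument d'unicit\'e ne s'applique pas directement.
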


\subsection{Critère de platitude}
On rappelle ci-dessous la caractérisation des connexions projectives intégrables. Il s'agit d'un résultat classique qui remonte aux travaux \cite{lie,tresse} de Lie et Tresse dans le cas plan.  En dimension supérieure, la caractérisation des connexion projectives intégrables est due à Weyl \cite{weyl}  et à Cartan \cite{cartan} (indépendamment). 

\subsubsection{Caractérisation des connexions de Cartan intégrables.}
\label{S:platitude}
La notion  générale et abstraite de connexion de Cartan trouve l'une de ses justifications dans la simplicité et le caractère naturel du critère (très classique) qui   caractérise  l'intégrabilité. 
En effet, cette caractérisation   s'obtient facilement à partir des deux résultats d'existence  et  d'unicité suivants:
\begin{lemm} On suppose $M$ connexe et simplement connexe.
\begin{enumerate}
\item Soit $\varpi\in \Omega^1(M)\otimes \mathfrak g$. 
Il existe une application holomorphe $f:M\rightarrow G$ telle que $\varpi=f^*(\omega_G)$ si et seulement si $d\varpi+ \frac{1}{2} [ \varpi, \varpi ]=0$;
 \item Soient $f,\hat{f}:M\rightarrow G$ holomorphes. Il existe $g\in G$ tel que $\hat{f}=g\cdot f$ si et seulement si $f^*(\omega_G)=\hat{f}^*(\omega_G)$.
\end{enumerate}
\end{lemm}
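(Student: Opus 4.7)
La preuve se r\'eduit \`a deux arguments classiques, un de type Frobenius pour (1) et un de type ``d\'eriv\'ee logarithmique'' pour (2).

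\textbf{Plan pour (1).} La n\'ecessit\'e est imm\'ediate puisque la forme de Maurer-Cartan $\omega_G$ satisfait elle-m\^eme l'\'equation de structure $d\omega_G+\tfrac12[\omega_G,\omega_G]=0$, relation qui se pr\'eserve par tir\'e-en-arri\`ere: si $\varpi=f^*\omega_G$, alors
\[
d\varpi+\tfrac12[\varpi,\varpi]=f^*\bigl(d\omega_G+\tfrac12[\omega_G,\omega_G]\bigr)=0\,.
\]
Pour la r\'eciproque, je proc\'ederais par un argument classique (remontant \`a Darboux et Cartan) sur la vari\'et\'e produit $M\times G$. On consid\`ere la $1$-forme \`a valeurs dans $\mathfrak g$
\[
\alpha=\mathrm{pr}_M^{\,*}\,\varpi-\mathrm{pr}_G^{\,*}\,\omega_G\in\Omega^1(M\times G)\otimes\mathfrak g
\]
et la distribution $\mathcal D=\ker\alpha$. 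Comme $\omega_G$ identifie $T_gG$ \`a $\mathfrak g$ en chaque point $g$, la distribution $\mathcal D$ est de rang $\dim M$ et transverse aux fibres $\{m\}\times G$. En combinant l'\'equation de Maurer-Cartan pour $\omega_G$ et l'hypoth\`ese d'int\'egrabilit\'e faite sur $\varpi$, on v\'erifie que $d\alpha\equiv 0$ modulo $\alpha$, donc $\mathcal D$ est involutive au sens de Frobenius.

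Soit alors $L$ la feuille maximale de $\mathcal D$ passant par $(m_0,e)$ pour un point base $m_0\in M$. La transversalit\'e aux fibres entra\^ine que $\mathrm{pr}_M|_L:L\to M$ est un diff\'eomorphisme local; un argument standard de rel\`evement des chemins (utilisant la compl\'etude de $G$ le long de ses champs invariants \`a gauche) montre que c'est un rev\^etement. L'hypoth\`ese que $M$ est connexe et simplement connexe force ce rev\^etement \`a \^etre un biholomorphisme: $L$ est donc le graphe d'une application holomorphe $f:M\to G$, et l'inclusion $L\subset\ker\alpha$ s'\'ecrit pr\'ecis\'ement $\varpi=f^*\omega_G$.

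\textbf{Plan pour (2).} L'invariance \`a gauche de $\omega_G$ fournit la n\'ecessit\'e: si $\hat f=g\cdot f=L_g\circ f$, alors $\hat f^*\omega_G=f^*L_g^{\,*}\omega_G=f^*\omega_G$. R\'eciproquement, posons $h=\hat f\cdot f^{-1}:M\to G$. La formule classique pour la d\'eriv\'ee logarithmique d'un produit donne
\[
\hat f^*\omega_G=(h\cdot f)^*\omega_G=\mathrm{Ad}(f^{-1})\cdot h^*\omega_G+f^*\omega_G\,.
\]
L'hypoth\`ese $f^*\omega_G=\hat f^*\omega_G$ entra\^ine donc $h^*\omega_G=0$, ce qui force $dh\equiv 0$ puisque $\omega_G$ est un isomorphisme fibr\'e. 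La connexit\'e de $M$ garantit alors que $h$ est constante, \'egale \`a un certain $g\in G$, et l'on a bien $\hat f=g\cdot f$.

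\textbf{Point d\'elicat.} L'\'etape principale est la globalisation de Frobenius dans (1): \'etablir que la projection $\mathrm{pr}_M|_L:L\to M$ est non seulement un diff\'eomorphisme local mais un rev\^etement exige un argument de rel\`evement de chemins qui repose sur le fait que $G$ est un groupe de Lie, et c'est \`a cet endroit que la simple connexit\'e de $M$ intervient de mani\`ere essentielle pour conclure. Le reste est routinier.
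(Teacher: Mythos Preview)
Your argument is correct and complete: the Frobenius argument on $M\times G$ for (1) and the logarithmic-derivative computation for (2) are precisely the standard proofs, and in particular coincide with what is done in the references \cite[\S1]{griffiths} and \cite[Chap.~3]{sharpe} to which the paper simply defers without giving its own proof. There is nothing to add.
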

\begin{proof}
 Nous renvoyons à \cite[\S1]{griffiths} ou encore à \cite[Chap. 3]{sharpe}.
\end{proof}

En combinant les  deux résultats précédents, on obtient immédiatement le théorème fondamental suivant: 
\begin{theo}
\label{T:caractrisationCARTANplat}
Une connexion de Cartan est intégrable si et seulement si sa courbure  de Cartan est identiquement nulle.
\end{theo}

\subsubsection{Intégrabilité d'une connexion projective.}

Nous allons expliciter la condition d'intégrabilité donnée par le 
\TT \ref{T:caractrisationCARTANplat} dans le cas des connexions projectives. On  note encore  $\omega$ la connexion de Cartan projective normale sans torsion  associée à une connexion projective  $\Pi$ sur $M$  via le \TT \ref{T:thmdeCARTAN}.

 Dans la section précédente, nous avons vu  que  dans un  système de coordonnées $x^1,\ldots,x^n$, la forme de connexion  standard  s'écrit matriciellement 
\begin{equation}
\label{E:matriceomega}
\begin{pmatrix}
 0 & \omega_i  \\
dx^k & \omega_i^k
\end{pmatrix} =\begin{pmatrix}
 0 &  \omega_{ij} dx^j\\
dx^k 
& \Pi_{ij}^kdx^j
\end{pmatrix},
\end{equation}
où les $\Pi_{ij}^k$ sont les coefficients de Thomas de $\Pi$. Puisque $\omega$ est normale et sans torsion,  
la courbure  $\Omega$ de  la forme de soudure standard  a une forme particulière.
Des lemmes \ref{L:formcourbure} et \ref{L:formcourbure}, on déduit 
 que l'on peut écrire  (\ref{E:matriceOMEGA}) sous la forme 
\begin{equation}
\label{E:matriceOMEGA2}
 \Omega= 
\begin{pmatrix}
 0 & \Omega_i  \\
0 & \Omega_i^k
\end{pmatrix} =\begin{pmatrix}
 0 &  \Omega_{iuv}\, dx^u\wedge dx^v\\
0
& \Omega_{iuv}^k \,dx^u\wedge dx^v
\end{pmatrix}.
\end{equation}
Nous allons exprimer les quantités  scalaires $\omega_{ij},\Omega_{ijuv}$ et $\Omega_ {ijuv}^k$ en termes des coefficients de Thomas de $\Pi$. 
\smallskip

 Quels que soient $i,j,k,\ell=1,\ldots,n$, on définit 
\begin{align}
\label{E:tenseurweyl}
W_{jk \ell}^i   =  
\Pi_{jk \ell}^i+ \frac{1}{n-1} \Big(  \delta_\ell^i \,\Pi_{jk} -\delta_k^i\,  \Pi_{j\ell}  \Big) 
\end{align}
où l'on a posé (avec sommation sur $m$ variant entre 1 et $n$)
\begin{align*}
 \Pi_{jk l}^i   =      
\frac{  \partial \Pi_{j\ell}^i  } { \partial x^k  } 
- 
\frac{  \partial \Pi_{jk}^i  } { \partial x^\ell  } 
+ \Pi_{j\ell}^m\,\Pi_{mk}^i
- \Pi_{jk}^m\,\Pi_{m\ell}^i
\qquad 
\mbox{ et } \qquad \Pi_{jk} = \Pi_{kj} =
\Pi_{j mk}^m =
 \frac{  \partial \Pi_{jk}^m  } { \partial x^m  } - 
 \Pi_{jm}^s\, \Pi_{sk}^m .
\end{align*}

On peut alors vérifier que (\ref{E:matriceomega}) et (\ref{E:matriceOMEGA2}) s'écrivent respectivement 
\begin{equation}
\label{E:matriceomegaII}
\begin{pmatrix}
 0 & \omega_i  \\
dx^k & \omega_i^k
\end{pmatrix} =\begin{pmatrix}
 0 &  \frac{1}{1-n} \Pi_{ij} \,dx^j\vspace{0.1cm}  \\
dx^k 
& \Pi_{ij}^k\,dx^j
\end{pmatrix} \qquad \mbox{ et }  \qquad 
\begin{pmatrix}
 0 & \Omega_i  \\
0 & \Omega_i^k
\end{pmatrix} =\begin{pmatrix}
 0 &  \frac{1}{1-n}\Pi_{iuv}\, dx^u\wedge dx^v \vspace{0.1cm}\\
0
& \frac{1}{2}\, W_{iuv}^k\, dx^u\wedge dx^v
\end{pmatrix}
\end{equation}
où pour $i,u,v=1,\ldots,n$ les  $\Pi_{iuv}$ désignent les coefficients antisymétriques en les indices $u$ et $v$, uniquement déterminés par les   relations suivantes 
\begin{equation*}
  d( \Pi_{ij} \,dx^j) +\Pi_{ku}\Pi_{iv}^k \,dx^u \wedge   dx^v  = \Pi_{iuv} \,dx^u\wedge dx^v . 
\end{equation*}

Explicitement, on a  (avec sommation sur $m$ variant entre 1 et $n$):
\begin{equation}
\label{E:Piiuvdef}
\Pi_{iuv}= \frac{1}{2} \Big(     
 \frac{  \partial \Pi_{iu} }{\partial x^v}-
\frac{  \partial \Pi_{iv}}{\partial x^u}
+\Pi_{iu}^m \Pi_{m v}- \Pi_{iv}^m \Pi_{m u}
\Big) . \end{equation}
On en déduit que les quantités $W_{jk \ell}^i $ 
sont les composantes d'un tenseur de type (1,3) associé à $\Pi$ de façon invariante. En effet, en utilisant 
 (\ref{E:transocoeffTHOMAS}) (ou bien (\ref{E:changementsoudure})), on vérifie qu'elles satisfont les lois de transformation 
\begin{align*}
\overline{W}^\alpha_{\beta\gamma\delta}= 
W_{jk l}^i   \frac{ \partial \overline{x}^\alpha    }{\partial {x}^i  } 
 \frac{ \partial  {x}^j    }{\partial  \overline{x}^\beta } 
 \frac{ \partial {x}^k   }{\partial \overline{x}^\gamma  } 
 \frac{ \partial {x}^l   }{\partial \overline{x}^\delta  } 
\end{align*}
quels que soient $\alpha,\beta,\gamma,\delta=1,\ldots,n$. 
\begin{defi}
Le tenseur 
\begin{equation}
\label{E:weyltenseur}
\mathfrak W_\Pi \in T _U\otimes (\Omega^1_U)^{\otimes 3}
\end{equation}
 dont les  composantes  sont données par les formules (\ref{E:tenseurweyl}) est le {\it tenseur de Weyl} de la connexion projective $\Pi$.
\end{defi}
Dans la situation regardée, le \TT \ref{T:caractrisationCARTANplat} s'énonce comme suit: 
\begin{coro}
\label{C:caractCNXIONprojoplate1}
 Une connexion projective $\Pi$ est intégrable si et seulement si sa courbure (\ref{E:matriceOMEGA2}) est identiquement nulle.  \end{coro}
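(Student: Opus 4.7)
Le plan est de d�duire ce corollaire comme cons�quence imm�diate du \TT \ref{T:caractrisationCARTANplat} appliqu� � la connexion de Cartan projective normale sans torsion $\omega_\Pi$ associ�e � $\Pi$ par le \TT \ref{T:thmdeCARTAN}. D'apr�s le corollaire qui pr�c�de (identifiant les deux notions de connexion projective), il suffit d'abord de traduire l'int�grabilit� de $\Pi$ au sens de la \DD \ref{D:defconnexionprojective} en celle de $\omega_\Pi$ au sens des connexions de Cartan. Rappelons que $\Pi$ est int�grable exactement lorsqu'elle est localement �quivalente � la connexion projective de $\mathbb P^n(\mathbb C)$; or cette derni�re correspond pr�cis�ment � la g�om�trie de Klein de mod�le $(\mathfrak g,\mathfrak h)=(\mathfrak{sl}_{n+1},\mathfrak h)$. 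L'unicit� dans le \TT \ref{T:thmdeCARTAN} garantit alors que $\omega_\Pi$ est int�grable (au sens des connexions de Cartan) si et seulement si $\Pi$ l'est.

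D'apr�s le \TT \ref{T:caractrisationCARTANplat}, l'int�grabilit� de $\omega_\Pi$ �quivaut � l'annulation identique de la courbure de Cartan $\mathfrak C(\omega_\Pi)=d\omega_\Pi+\frac{1}{2}[\omega_\Pi,\omega_\Pi]$ sur le fibr� principal $B_\Pi$. Il s'agit donc de montrer que cette annulation sur $B_\Pi$ est �quivalente � celle de la matrice (\ref{E:matriceOMEGA2}) sur $U$, pour tout syst�me de coordonn�es.

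Pour cela, la premi�re �tape consiste � rappeler que $\mathfrak C(\omega_\Pi)$ correspond � un tenseur: la fonction courbure $K: B_\Pi\rightarrow {\rm Hom}(\wedge^2(\mathfrak g/\mathfrak h),\mathfrak g)$ est $H$-�quivariante, de sorte que $K$ s'annule partout sur $B_\Pi$ si et seulement si elle s'annule sur l'image d'une section (locale) arbitraire. On applique alors ceci � la jauge standard $\sigma$ associ�e aux coordonn�es $x^1,\ldots,x^n$: le tir�-en-arri�re $\sigma^*\mathfrak C(\omega_\Pi)$ s'obtient en calculant $d\omega_\sigma+\omega_\sigma\wedge \omega_\sigma$ avec $\omega_\sigma$ donn� par (\ref{E:matriceomegaII}), ce qui fournit exactement la matrice (\ref{E:matriceOMEGA2}). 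L'annulation de cette derni�re sur $U$ �quivaut donc � celle de $\mathfrak C(\omega_\Pi)$ au-dessus de $U$, et l'�quivalence globale sur $M$ s'obtient en recouvrant $M$ par des domaines de cartes.

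Le seul point d�licat est de v�rifier, dans la deuxi�me �tape, que les coefficients effectivement consign�s dans la matrice (\ref{E:matriceOMEGA2})---� savoir $\Pi_{iuv}$ et $W^k_{iuv}$ obtenus via (\ref{E:tenseurweyl}) et (\ref{E:Piiuvdef})---co�ncident bien avec les composantes de $d\omega_\sigma+\omega_\sigma\wedge \omega_\sigma$: il s'agit d'un calcul direct en coordonn�es utilisant la normalit� (Lemme \ref{L:normal}) pour garantir que $\Omega_0=0$ et que seules subsistent les composantes $\Omega_i$ et $\Omega_i^k$ annonc�es. Ce calcul, essentiellement alg�brique, ne pr�sente pas de difficult� conceptuelle mais constitue l'�tape la plus technique de la preuve; le lecteur le trouvera d�taill� dans les r�f�rences cit�es (notamment \cite[\S4]{kobayashi} et \cite{kobayashinagano}).
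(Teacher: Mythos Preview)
Your proof is correct and follows exactly the approach of the paper: the corollary is presented there as nothing more than the specialization of \TT \ref{T:caractrisationCARTANplat} to the projective situation, via the identification of $\Pi$ with its normal Cartan connection $\omega_\Pi$ (\TT \ref{T:thmdeCARTAN}). You simply spell out in greater detail the two points the paper leaves implicit---that integrability of $\Pi$ matches integrability of $\omega_\Pi$, and that the $H$-equivariance of the curvature lets one test its vanishing on the image of the standard gauge, where it becomes the matrix (\ref{E:matriceOMEGA2}).
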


 On  montre que les composantes 
$W_{jk l}^i$ du tenseur de  Weyl $\mathfrak W_\Pi$ vérifient les {\og identités de Bianchi-Cartan\fg} 
$W_{jk l}^i+W_{k lj}^i+W_{jjk }^i=0$ quels que soient $i,j,k,l$ compris entre 1 et $n$ . Cela implique en particulier que le tenseur de Weyl d'une connexion projective est toujours identiquement nul en dimension $n=2$. A contrario, lorsque $n>2$, on peut montrer que les $\Pi_{iuv}$ sont  identiquement nuls si tous les $W^i_{jk\ell}$ le sont.  On a donc le 

\begin{coro}
\label{C:caractCNXIONprojoplate2} En dimension $n>2$, 
 une connexion projective $\Pi$ est intégrable si et seulement si son  tenseur de Weyl $\mathfrak W_\Pi$ est identiquement nul. 
\end{coro}

Lorsque $n=2$, on déduit de l'annulation du tenseur de Weyl $\mathfrak W_\Pi$ que les quantités $\Pi_{iuv}$ satisfont aux lois de transformation
\begin{align*}
\overline{\Pi}_{\overline{\imath}\overline{u}\overline{v}}= \Pi_{iuv}
 \frac{ \partial  {x}^i    }{\partial  \overline{x}^{\overline{\imath}} } 
 \frac{ \partial {x}^u   }{\partial \overline{x}^{\overline{u}}} 
 \frac{ \partial {x}^v   }{\partial \overline{x}^{\overline{v} } } 
\end{align*}
quels que soient $ \overline{\imath},  \overline{u},\overline{v}  =1,\ldots,n$. Elles sont donc les composantes d'un  {\it tenseur de courbure} 3 fois covariant 
$$ \mathfrak P_\Pi \in  (\Omega^1_U)^{\otimes 3}$$
attaché à $\Pi$ de façon invariante.  On a alors le 
\begin{coro}
\label{C:caractCNXIONprojoplate3} En dimension $n=2$, 
 une connexion projective $\Pi$ est intégrable si et seulement si son  tenseur de courbure $\mathfrak P_\Pi$ est identiquement nul. 
\end{coro}

\subsection{Sous-variétés complètement géodésiques}
Nous allons maintenant faire le lien entre la théorie des connexions projectives et celle des systèmes différentiels du second ordre qui sont sous une  certaine forme normale bien spécifique. 

Commençons par rappeler ce qu'est une  sous-variété totalement géodésique. 
\begin{defi}
\label{D:defsousvarTG}
Une sous-variété $V\subset U$ est {\it totalement géodésique} pour 
 $\nabla$ si  pour  tout  $v\in V$ et pour toute direction tangente $ L\in \mathbb PT_{V,v}$, le germe en $v$ de la courbe géodésique issue de $L$ est inclus~dans~$V$. 
\end{defi}

\begin{exem}
 Les sous-espaces affines (de n'importe quelle dimension) sont des sous-variétés totalement géodésiques pour la connexion affine sur $\mathbb C^n$ dont les symboles de Chritofell sont identiquement nuls.
\end{exem}

\subsubsection{Systèmes différentiels des sous-variétés complètement géodésiques}

 On fixe  un entier $m$ compris entre $1$ et $n-1$. Soit $Z:t=(t^1,\ldots,t^m)\mapsto Z(t)=(Z^1(t),\ldots,Z^n(t))$ un germe d'application de rang $m$ qui paramétrise un germe de sous-variété $V^m\subset U$ de dimension $m$. Une traduction immédiate de la \DD 
\ref{D:defsousvarTG} ci-dessus donne que $V^m$ est totalement géodésique si et seulement si,  pour tout $t_0=(t_0^1,\ldots,t_0^m)$ dans le domaine de définition de $Z$ et pour tout $\tau=(\tau^1,\ldots, \tau^m)\in \mathbb C^m$ non nul, le germe d'application  $ s\in (\mathbb C,0) \mapsto   Z(t_0+s\, \tau) \in V^m $
 paramétrise une courbe géodésique.  La \PP \ref{E:eqnonparamGEOD}  combinée avec un simple calcul formel à l'ordre 2 nous donne alors la 
\begin{prop}
\label{P:systemTG}
 L'application $Z$ paramétrise une sous-variété $V^m$  totalement géodésique pour $\nabla$ si et seulement si il existe des fonctions scalaires $C^\ell: t\mapsto C^\ell(t)\in \mathbb C$ pour $\ell=1,\ldots,n$, telles qu'on ait identiquement 
\begin{equation}
 \label{E:systemvarTG}
\frac{\partial^2 Z^k}{\partial t^a\partial t^b}+ \sum_{i,j=1}^n \Gamma^k_{ij}  \frac{\partial Z^i}{\partial t^a} \frac{\partial Z^j}{\partial t^b}= C^\ell \frac{\partial Z^k}{\partial t^\ell}
\end{equation}
sur $U$, cela quels que soient les indices $k,a,b$ compris entre 1 et $n$. 
\end{prop}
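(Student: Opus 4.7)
The plan is to unwind the definition of a totally geodesic submanifold through the parametric geodesic equation (\ref{E:equatgeod}) and then invoke a polarization argument. Introduce the shorthand
\[
G^k_{ab}(t) := \frac{\partial^2 Z^k}{\partial t^a \partial t^b}(t) + \sum_{i,j=1}^n \Gamma^k_{ij}(Z(t))\, \frac{\partial Z^i}{\partial t^a}(t)\, \frac{\partial Z^j}{\partial t^b}(t)
\]
for the left-hand side of (\ref{E:systemvarTG}); since $Z$ is of rank $m$, the desired conclusion is equivalent to the assertion that at each $t$ the vector $\bigl(G^k_{ab}(t)\bigr)_{k=1}^n$ lies in $T_{V^m, Z(t)} = \mathrm{span}\bigl(\partial_1 Z(t),\ldots,\partial_m Z(t)\bigr)$.

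For the direct implication, I would assume $V^m$ totally geodesic, fix $t_0$ and $\tau \in \mathbb C^m$, and consider the geodesic $c$ of $\nabla$ issuing from the tangent direction $\sum_a \tau^a \partial_a Z(t_0) \in T_{V^m, Z(t_0)}$. By hypothesis, $c$ stays in $V^m$, so it pulls back locally under $Z$ to a curve $\gamma$ in parameter space with $\gamma(0) = t_0$ and $\dot\gamma(0) = \tau$. The chain rule applied twice to $c^k = Z^k \circ \gamma$, substituted into (\ref{E:equatgeod}), yields
\[
\sum_{a=1}^m \ddot\gamma^a(s)\, \partial_a Z^k(\gamma(s)) + \sum_{a,b=1}^m \dot\gamma^a(s)\dot\gamma^b(s)\, G^k_{ab}(\gamma(s)) = 0.
\]
Evaluating at $s = 0$ shows that $\sum_{a,b}\tau^a \tau^b\, G^k_{ab}(t_0) \in T_{V^m, Z(t_0)}$ for every $\tau$. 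Polarizing this quadratic identity (via $B(\sigma,\tau)=\tfrac14(B(\sigma+\tau,\sigma+\tau)-B(\sigma-\tau,\sigma-\tau))$ evaluated at $\sigma=e_a$, $\tau=e_b$) transfers the conclusion to each $G^k_{ab}(t_0)$ individually, which therefore admits an expansion $G^k_{ab}(t_0) = \sum_\ell C^\ell_{ab}(t_0)\, \partial_\ell Z^k(t_0)$. Holomorphic dependence of the $C^\ell_{ab}$ on $t$ follows from Cramer's rule applied to the pointwise linearly independent vectors $\partial_1 Z(t), \ldots, \partial_m Z(t)$.

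For the converse, assume such functions $C^\ell_{ab}$ exist. Given any initial tangent direction $\sum_a \tau^a \partial_a Z(t_0) \in T_{V^m, Z(t_0)}$, solve the second-order ODE in parameter space
\[
\ddot\gamma^\ell(s) + \sum_{a,b=1}^m C^\ell_{ab}(\gamma(s))\, \dot\gamma^a(s)\dot\gamma^b(s) = 0, \qquad \gamma(0)=t_0,\ \dot\gamma(0)=\tau,
\]
and substitute $c := Z\circ\gamma$ into the chain-rule identity above: the two sums cancel by construction of $\gamma$, showing that $c$ satisfies (\ref{E:equatgeod}) and is hence a parametrized geodesic of $\nabla$ lying entirely in $V^m$. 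By the uniqueness clause for parametrized geodesics, $c$ is \emph{the} geodesic issuing from $\sum_a \tau^a \partial_a Z(t_0)$, so every geodesic tangent to $V^m$ is contained in $V^m$ and total geodesy is established.

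The only non-routine ingredient is the polarization step: the parametric geodesic equation delivers only the diagonal information $\sum_{a,b}\tau^a\tau^b G^k_{ab}(t_0) \in T_{V^m, Z(t_0)}$, and one must argue that this determines each $G^k_{ab}$ individually. Beyond this, the proof is pure chain-rule bookkeeping together with the standard existence-and-uniqueness theorem for second-order ODEs applied to $\gamma$ in parameter space.
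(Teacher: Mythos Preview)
Your argument is correct. It follows the same overall strategy as the paper---reduce the notion of total geodesy to a statement about curves in $V^m$, compute at second order, then polarize---but the implementation differs. The paper's one-line sketch (just above the proposition) uses the \emph{affine rays} $s\mapsto Z(t_0+s\tau)$ in the parameter domain together with the \emph{unparametrized} geodesic criterion (Proposition~\ref{P:systemdiffCgeodesique}); you instead pull back an actual \emph{parametrized} geodesic of $\nabla$ to the parameter domain and work with~(\ref{E:equatgeod}). Your route costs you the auxiliary ODE for $\gamma$ in the converse direction, but it sidesteps a delicate point in the paper's sketch: the claim that every $s\mapsto Z(t_0+s\tau)$ parametrizes a geodesic curve is not literally true for an arbitrary parametrization $Z$ of a totally geodesic submanifold (take $Z(t^1,t^2)=(t^1,t^2+(t^1)^2,0)$ for the plane $\{x^3=0\}$ in flat $\mathbb C^3$), so additional care is needed there anyway. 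The polarization step you isolate is indeed the one substantive point, and it is implicit in the paper's ``simple calcul formel \`a l'ordre~2''.

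One remark on the statement itself: you (correctly) obtain coefficients $C^\ell_{ab}$ depending on the pair $(a,b)$, whereas the displayed equation~(\ref{E:systemvarTG}) writes only $C^\ell$. The latter cannot be right as written---the left-hand side depends on $(a,b)$ while the right would not---and the very next proposition ($\mathcal S^m_\nabla$ in Proposition~\ref{P:11}) confirms that the coefficients must carry the indices $a,b$. So your version is the intended one.
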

 Une sous-variété générique $V^m$ de dimension $m$  vérifie  la condition de transversalité 
\begin{equation*}
(\mathcal T) \qquad\qquad\qquad
V^m \; \mbox{\it est transverse à la projection } (x^1,\ldots,x^n)\mapsto (x^1,\ldots,x^m).\qquad\qquad\qquad
\end{equation*}
Celle-ci  est vérifiée si et seulement si $V^m$ peut être  définie (localement) comme le graphe d'une application $t=(t^1,\ldots,t^m)\mapsto (Z^{m+1}(t),\ldots,Z^{n}(t))$ de rang $m$.  On suppose  que $V^m$ vérifie $(\mathcal T)$ dans~ce~qui~suit.  Sous cette hypothèse, on peut reformuler  la \PP \ref{P:systemTG} pour obtenir la 
\begin{prop}
\label{P:11}
 Les assertions  suivantes sont équivalentes: 
\begin{enumerate}
 \item $V^m$ est totalement géodésique pour la connexion affine $\nabla$;
\item il existe une paramétrisation $t=(t^1,\ldots,t^m)\mapsto \big(t^1,\ldots,t^m,Z^{m+1}(t),\ldots,Z^{n}(t)\big)$ de $V^m$ qui satisfait le  système  différentiel suivant (avec  $k=m+1,\ldots,n$ et $a,b=1,\ldots,m$ tels que $a\leq b$):
\begin{equation*}
\label{Sm}
(\mathcal S^m_\nabla) \quad : \quad 
\left\lbrace
\begin{array}{ccl}
\frac{\partial^2 Z^k}{\partial t^a\partial t^b}
 & = & \quad  \sum_{c=1}^m \sum _{i,j=m+1}^n 
\frac{\partial Z^i}{\partial t^a}
\frac{\partial Z^j}{\partial t^b}
\frac{\partial Z^k}{\partial t^c} \Gamma^c_{ij}  \vspace{0.2cm}\\
 & &
  +\sum_{c=1}^m \sum _{i=m+1}^n 
\frac{\partial Z^k}{\partial t^c}
\left( 
\frac{\partial Z^i}{\partial t^a}\Gamma^c_{ib}+
\frac{\partial Z^i}{\partial t^b}\Gamma^c_{ai}
\right)- \sum _{i,j=m+1}^n \frac{\partial Z^i}{\partial t^a}
\frac{\partial Z^j}{\partial t^b} 
\Gamma^k_{ij}\vspace{0.2cm} \\
 & &  + 
\sum_{c=1}^m \frac{\partial Z^k}{\partial t^c}\Gamma^c_{ab}
-\sum _{i=m+1}^n  \frac{\partial Z^i}{\partial t^a}\Gamma^k_{ib}
-\sum _{i=m+1}^n \frac{\partial Z^i}{\partial t^b}\Gamma^k_{ai}  - \Gamma^k_{ab}\, ;
\end{array} 
 \right.\vspace{0.2cm} 
\end{equation*}
\item toute paramétrisation   $t
\mapsto \big(t^1,\ldots,t^m,Z^{m+1}(t),\ldots,Z^{n}(t)\big)$ de $V^m$ satisfait  $(\mathcal S^m_\nabla)$.
\end{enumerate}
\end{prop}
Pour  une démonstration de cette proposition  via la méthode du repère mobile, on peut consulter \cite[\S40-41]{hachtroudi}. 
\begin{rema}
 Le système $(\mathcal S^1_\nabla) $ n'est rien d'autre que le système  (\ref{E:eqnonparamGEOD}) de la \PP \ref{P:systemdiffCgeodesique}  caractérisant les courbes géodésiques pour $\nabla$. Il est complètement intégrable pour une raison évidente. Par contre, aucun des systèmes $(\mathcal S^m_\nabla) $ pour $m>1$ n'est complètement intégrable a priori. On peut montrer ({\it cf.} \cite{hachtroudi,yen}) que si l'un deux l'est, alors $\nabla$ est {\it projectivement plate}, {\it i.e} $[\nabla]$ est intégrable (et donc  les systèmes différentiels $(\mathcal S^m_\nabla) $ sont tous complètement intégrables). 
\end{rema}

La proposition ci-dessus montre que, étant donné un système de coordonnées $x^1,\ldots,x^n$ sur $U$, à toute connexion affine $\nabla$ on peut associer  la série $(\mathcal S^m_\nabla) $ de systèmes différentiels du second ordre dont les solutions paramétrisent des sous-variétés totalement géodésiques pour $\nabla$. Il découle immédiatement des définitions  \ref{D:defequivprojo} et \ref{D:defsousvarTG}  que    deux connexions affines sont projectivement équivalentes si et seulement si elles ont les mêmes sous-variétés totalement géodésiques.  On peut donc poser la
\begin{defi}
\label{D:defsousvarTGprojo}
Une  sous-variété $V$ est {\it totalement géodésique} pour 
une connexion projective  $\Pi$ si  $V$ l'est pour une (et alors pour toute) connexion affine qui représente  $\Pi$ localement. 
\end{defi}

Cette définition étant posée et justifiée, on 
peut formuler une version projective de la \PP \ref{P:11} (et qui s'en déduit de façon immédiate).  Notons $\Pi_{ij}^k$ les coefficients de Thomas d'une connexion projective $\Pi$ sur $U$ relativement à un système de coordonnées $x^1,\ldots,x^n$. Soit $V^m$ une sous-variété de $U$ de dimension $m<n$  vérifiant la condition de transversalité ($\mathcal T$).
\begin{prop} 
\label{P:11projo}
Les assertions suivantes sont équivalentes:
\begin{enumerate}
\label{P:Spi}
 \item $V^m$ est totalement géodésique pour la connexion projective $\Pi$;
\item il existe une paramétrisation $t=(t^1,\ldots,t^m)\mapsto \big(t^1,\ldots,t^m,Z^{m+1}(t),\ldots,Z^{n}(t)\big)$ de $V^m$ qui satisfait le  système  différentiel suivant (avec  $k=m+1,\ldots,n$ et $a,b=1,\ldots,m$ tels que $a\leq b$):
\begin{equation*}
\label{Sm}
(\mathcal S^m_\Pi) \quad : \quad 
\left\lbrace
\begin{array}{ccl}
\frac{\partial^2 Z^k}{\partial t^a\partial t^b}
 & = & \quad  \sum_{c=1}^m \sum _{i,j=m+1}^n 
\frac{\partial Z^i}{\partial t^a}
\frac{\partial Z^j}{\partial t^b}
\frac{\partial Z^k}{\partial t^c} \Pi^c_{ij}  \vspace{0.2cm}\\
 & &
  +\sum_{c=1}^m \sum _{i=m+1}^n 
\frac{\partial Z^k}{\partial t^c}
\left( 
\frac{\partial Z^i}{\partial t^a}\Pi^c_{ib}+
\frac{\partial Z^i}{\partial t^b}\Pi^c_{ai}
\right)- \sum _{i,j=m+1}^n \frac{\partial Z^i}{\partial t^a}
\frac{\partial Z^j}{\partial t^b} 
\Pi^k_{ij}\vspace{0.2cm} \\
 & &  + 
\sum_{c=1}^m \frac{\partial Z^k}{\partial t^c}\Pi^c_{ab}
-\sum _{i=m+1}^n  \frac{\partial Z^i}{\partial t^a}\Pi^k_{ib}
-\sum _{i=m+1}^n \frac{\partial Z^i}{\partial t^b}\Pi^k_{ai}  - \Pi^k_{ab}\, ;
\end{array} 
 \right.\vspace{0.2cm} 
\end{equation*}
\item toute paramétrisation   $t
\mapsto \big(t^1,\ldots,t^m,Z^{m+1}(t),\ldots,Z^{n}(t)\big)$ de $V^m$ satisfait  $(\mathcal S^m_\Pi)$.
\end{enumerate}
\end{prop}

\begin{rema}
 En dimension $n=2$, il n'y a qu'un seul système $(\mathcal S^m_\Pi)$ associé à une connexion projective $\Pi$. C'est le système 
$(\mathcal S^1_\Pi)$ qui  se réduit en fait à l'équation différentielle 
\begin{equation}
 \label{E:y''}
\frac{d^2y}{dt^2}= A \big(\frac{dy}{dt}\big)^3+B \big(\frac{dy}{dt}\big)^2+C \frac{dy}{dt}+D
\end{equation}
où $A,B,C$ et $D$ s'expriment en fonction des coefficients de Thomas de $\Pi$ via les relations 
\begin{equation}
\label{E:coeffy''}
  A=\Pi^1_{22}, \quad B=2\,\Pi^1_{12}- \Pi^2_{22}
, \quad C=\Pi^1_{11} -2\,\Pi^2_{12} \quad \mbox{ et } \quad 
D=-\Pi_{11}^2\,. 
\end{equation}

En combinant (\ref{E:coeffy''}) avec les relations  $\Pi^k_{ij}=\Pi^k_{ji}$ et $\Pi_{1j}^1+\Pi_{2j}^2=0$, on peut exprimer les $\Pi_{ij}^k$ en fonctions de $A,B,C$ et $D$. On en déduit des formules explicites  exprimant $\Pi_{112}$ et $\Pi_{212}$ (définis en (\ref{E:Piiuvdef})) en fonction de $A,B,C,D$ et de leurs dérivées partielles d'ordre 1 et 2. D'après le \CC \ref{C:caractCNXIONprojoplate3}, $\Pi$ est intégrable si et seulement si $\Pi_{112}=\Pi_{212}=0$. Lorsqu'on exprime ces deux conditions en termes de $A,B,C,D$ et de leurs dérivées partielles, on retrouve le critère de Lie et Tresse caractérisant les équations (\ref{E:y''}) équivalentes par un changement de coordonnées ponctuelles, à l'équation plate  ${d^2\overline{y}}/{d\overline{t}^2}= 0$. \end{rema}

\subsubsection{Restriction d'une connexion projective}
\label{S:restriction}
On commence par rappeler deux faits élémentaires. Soit  $V$ une sous-variété de $M$ munie des deux  connexions affines $\nabla$ et $\overline{\nabla}$. Alors:
(1) si  $V$ est totalement géodésique pour $\nabla$, celle-ci induit de façon naturelle  une connexion affine $\nabla|_V$  sur $V$ (par restriction); (2) si  $V$ est totalement géodésique pour $\nabla$ et pour $\overline{\nabla}$,  les connexions affines induites $\nabla|_V$ et $\overline{\nabla}|_V$
sont projectivement équivalentes (en tant que connexions affines sur $V$) si $\nabla$ et $\overline{\nabla}$ le sont. 

On peut donc poser  la 
\begin{defi}
 Soit $V$ une sous-variété totalement géodésique pour une connexion projective $\Pi$ sur $M$. Par restriction, $\Pi$ induit une connexion projective  sur $V$   bien définie, que l'on note $\Pi|_V$.
\end{defi}

On vérifie alors sans difficulté que l'opération de restriction d'une connexion projective le long d'une sous-variété totalement géodésique satisfait les propriétés énoncées par le 
\begin{lemm}
\label{L:projint}
Soient $V$ et $V'$ deux  sous-variétés totalement géodésiques pour une connexion projective $\Pi$ sur $M$. Alors 
\begin{enumerate}
 \item la connexion projective induite  $\Pi|_V$ est intégrable si $\Pi$ l'est.
\item  sous l'hypothèse que l'intersection $V\cap V'$ est lisse, celle-ci est encore une sous-variété totalement géodésique pour $\Pi$. De plus 
$  \big(\Pi|_{V}     \big)|_{V'}= \big(\Pi|_{V'}     \big)|_{V}=\Pi|_{V\cap V'} $.
\end{enumerate}
\end{lemm}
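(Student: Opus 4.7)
Mon approche pour �tablir ce lemme consiste � exploiter le fait qu'une connexion projective est enti�rement d�termin�e par sa famille de courbes g�od�siques (voir la \DD \ref{D:defequivprojo}), ce qui ram�ne les deux assertions � de simples v�rifications portant sur les g�od�siques.

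Pour l'assertion (1), j'utiliserais la caract�risation g�om�trique de l'int�grabilit�: comme $\Pi$ est int�grable, il existe au voisinage de tout point de $V$ un biholomorphisme local $\phi$ envoyant $\Pi$ sur la connexion projective plate $\Pi_0$ de $\mathbb C^n$. L'�quivalence projective pr�servant les courbes g�od�siques (et donc les sous-vari�t�s totalement g�od�siques), $\phi(V)$ est une sous-vari�t� totalement g�od�sique pour $\Pi_0$. Or les courbes g�od�siques de $\Pi_0$ sont les morceaux de droites affines de $\mathbb C^n$, de sorte que $\phi(V)$ est n�cessairement (localement) un sous-espace affine. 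La restriction de $\Pi_0$ � un tel sous-espace est manifestement la connexion projective plate sur celui-ci, donc est int�grable. Par transport via $\phi$, il en va de m�me pour $\Pi|_V$.

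Pour l'assertion (2), je commencerais par �tablir directement que $V \cap V'$ est totalement g�od�sique pour $\Pi$: pour tout $p \in V \cap V'$ et toute direction tangente $L \in \mathbb P T_{V \cap V', p}$, on a $L \in \mathbb P T_{V, p}$ et $L \in \mathbb P T_{V', p}$, de sorte que la courbe g�od�sique de $\Pi$ issue de $L$ est contenue simultan�ment dans $V$ et dans $V'$, donc dans $V\cap V'$. Pour �tablir la commutativit� des restrictions, le point-cl� est que les courbes g�od�siques de $\Pi|_V$ sont, par construction, exactement les courbes g�od�siques de $\Pi$ contenues dans $V$. Le m�me argument que pr�c�demment montre alors que $V \cap V'$ est totalement g�od�sique dans $V$ pour $\Pi|_V$, et que les courbes g�od�siques de $(\Pi|_V)|_{V\cap V'}$ sont pr�cis�ment les courbes g�od�siques de $\Pi$ contenues dans $V \cap V'$. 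Cette caract�risation �tant sym�trique en $V$ et $V'$ et co�ncidant avec celle des courbes g�od�siques de $\Pi|_{V \cap V'}$, les trois connexions projectives de l'�nonc� co�ncident.

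Le point qui demandera le plus d'attention, et que je traiterais en amont, est l'identification explicite des courbes g�od�siques de $\Pi|_V$ avec les courbes g�od�siques de $\Pi$ trac�es dans $V$: elle repose sur le choix d'un repr�sentant affine sym�trique $\nabla$ de $\Pi$ pour lequel $V$ est totalement g�od�sique, sur la d�finition de la connexion affine induite $\nabla|_V$, et sur le fait que la classe d'�quivalence projective de cette derni�re ne d�pend pas du choix de $\nabla$ --- autrement dit, sur les deux faits �l�mentaires rappel�s en d�but de Section \ref{S:restriction}. Une fois ce point acquis, le reste est essentiellement diagrammatique.
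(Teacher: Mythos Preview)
Your argument is correct. The paper does not actually prove this lemma: it is introduced with the sentence ``On v\'erifie alors sans difficult\'e que l'op\'eration de restriction d'une connexion projective le long d'une sous-vari\'et\'e totalement g\'eod\'esique satisfait les propri\'et\'es \'enonc\'ees par le'' and left to the reader. Your write-up supplies precisely the kind of elementary verification the author has in mind, organised around the fact that a projective connection is determined by its family of geodesic curves (\DD~\ref{D:defequivprojo}) together with the two facts recalled at the start of Section~\ref{S:restriction}. The one step worth spelling out in slightly more detail is that a totally geodesic submanifold for the flat projective connection is locally an affine subspace: this follows at once from the system $(\mathcal S^m_\Pi)$ of Proposition~\ref{P:Spi} with all $\Pi^k_{ij}=0$, which reduces to $\partial^2 Z^k/\partial t^a\partial t^b=0$.
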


Bien qu' élémentaire, ce lemme est particulièrement utile pour prouver la \PP \ref{P:existePI}. Celle-ci  est   cruciale pour établir le \TT \ref{T:main} qui est notre résultat principal.

\section{Critère de linéarisabilité d'un tissu en hypersurfaces}
\label{S:codim1}
Nous abordons maintenant la question centrale examinées dans cet article: celle de reconnaître de façon effective si un tissu donné est {\it linéarisable}

\subsection{Linéarisabilité  des $(n+2)$-tissus en dimension $n$}
Dans cette section, on fixe et on note $W=(\mathcal F^1,\ldots, \mathcal F^{d})$ un $d$-tissu de codimension 1 sur $U$. Pour tout $i=1,\ldots,d$, l'on désigne par $\Omega^i$ une 1-forme différentielle sur $U$ qui définit $\mathcal F^i$. 
\begin{defi}
 Une connexion projective $\Pi$ est  {\it compatible} avec $W$ si les feuilles des  feuilletages qui constituent $W$ sont des hypersurfaces totalement géodésiques pour  $\Pi$. 
\end{defi}
\begin{exem}
\label{Ex:platelin}
 La connexion projective plate est compatible avec n'importe quel tissu linéaire.
\end{exem}

\begin{prop}
\label{P:existePI}
 Il existe une unique connexion projective compatible avec  un $(n+2)$-tissu donné. 
\end{prop}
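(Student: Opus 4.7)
L'approche consiste � voir la compatibilit� de $\Pi$ avec chaque feuilletage comme un syst�me lin�aire sur les coefficients de Thomas, puis � v�rifier que, pour un $(n+2)$-tissu, ce syst�me est carr� et inversible en chaque point de $U$.

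Fixons un syst�me de coordonn�es $x^1,\dots,x^n$ sur $U$. D'apr�s la \PP \ref{P:connexionprojoTHOMAS}, une connexion projective est enti�rement d�termin�e par ses coefficients de Thomas $\Pi^k_{ij}$, sym�triques en $i,j$ et v�rifiant $\sum_\ell \Pi^\ell_{\ell j}=0$. Ponctuellement, ceux-ci vivent dans un $\mathbb{C}$-espace vectoriel $\mathcal{T}$ de dimension $n\binom{n+1}{2}-n = \frac{n(n-1)(n+2)}{2}$. Quitte � restreindre $U$ autour d'un point arbitraire et, pour chaque feuilletage $\mathcal{F}^i$, � choisir une coordonn�e dans laquelle ses feuilles sont des graphes $x^n = Z^n_i(x^1,\dots,x^{n-1};c)$, la \PP \ref{P:11projo} appliqu�e avec $m=n-1$ traduit la condition {\og les feuilles de $\mathcal{F}^i$ sont totalement g�od�siques pour $\Pi$\fg} en $\binom{n}{2}$ �quations scalaires, affines en les $\Pi^k_{ij}$, en chaque point; les coefficients de la partie lin�aire ne d�pendent que des pentes de $\mathcal{F}^i$. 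En r�unissant ces �quations pour $i=1,\dots,n+2$, on obtient un syst�me lin�aire de $(n+2)\binom{n}{2}=\dim\mathcal{T}$ �quations sur $\mathcal{T}$.

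L'existence et l'unicit� de $\Pi_W$ se ram�nent alors � l'inversibilit�, en chaque point $x\in U$, de la matrice carr�e $M(x)$ de ce syst�me. Or $M(x)$ ne d�pend que des hyperplans tangents $T_{\mathcal{F}^i,x}\subset T_{U,x}$, qui sont en position g�n�rale par d�finition d'un $(n+2)$-tissu; l'inversibilit� de $M(x)$ est donc une question purement d'alg�bre lin�aire portant sur une configuration g�n�rique de $n+2$ hyperplans dans $\mathbb{C}^n$. On la v�rifierait par une configuration mod�le lin�aire: en prenant pour $\mathcal{F}^i$ un feuilletage par hyperplans parall�les (de normales, par exemple, les $n$ normales canoniques compl�t�es par $\sum dx^i$ et $\sum i\,dx^i$), l'\EE \ref{Ex:platelin} fournit la connexion plate $\Pi=0$ comme solution. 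Dans ce mod�le le second membre s'annule et il suffit de v�rifier qu'aucun tenseur de Thomas non nul n'annule simultan�ment les $n+2$ secondes formes fondamentales attach�es � ces hyperplans; la non-nullit� de $\det M$ �tant une condition Zariski-ouverte sur la configuration, elle se propage depuis ce mod�le � toute configuration g�n�rale.

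L'obstacle principal attendu est pr�cis�ment ce calcul d'inversibilit� dans le mod�le: bien que le comptage de dimension et la traduction via la \PP \ref{P:11projo} soient routiniers, montrer directement l'injectivit� de l'application qui � un tenseur de Thomas sym�trique � trace nulle associe le $(n+2)$-uplet des secondes formes fondamentales demande un choix adapt� de param�trisation de $\mathcal{T}$ et l'�valuation d'un d�terminant de taille $\frac{n(n-1)(n+2)}{2}$. Une fois l'inversibilit� ponctuelle �tablie, $\Pi_W$ s'obtient en r�solvant $M(x)\cdot \Pi(x)=b(x)$, et l'unicit� garantit automatiquement le bon comportement des coefficients trouv�s sous les formules de changement de coordonn�es (\ref{E:transocoeffTHOMAS}), ce qui d�finit une connexion projective $\Pi_W$ globalement sur $U$.
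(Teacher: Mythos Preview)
Your setup is correct and matches the paper's: the dimension count $(n+2)\binom{n}{2}=\frac{n(n-1)(n+2)}{2}=\dim\mathcal{T}$ and the reduction to a pointwise square linear system in the Thomas coefficients is exactly how the argument begins. The gap is in your invertibility step.

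The Zariski-open argument does not prove what you need. Verifying $\det M\neq 0$ on one model configuration only shows that $\{\det M\neq 0\}$ is a nonempty Zariski-open subset of the configuration space of $(n+2)$ hyperplans; it does \emph{not} show that this open set contains the whole general-position locus. A priori there could exist configurations of $n+2$ hyperplans in general position on which $\det M$ vanishes, and the proposition would fail for any web whose tangent distribution hits such a configuration. What is actually required is that $\det M$ is, up to a nonzero constant, a product of the very quantities $\Omega^{i_1}\wedge\cdots\wedge\Omega^{i_n}$ whose nonvanishing \emph{defines} general position. You also acknowledge that the model verification itself is left undone.

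The paper's proof supplies both missing ingredients. After the same normalisation and count, it handles $n=2$ directly (a Vandermonde) and $n=3$ by an explicit computation: the $15\times 15$ determinant equals $4\prod_{i<j<k}\frac{\Omega^i\wedge\Omega^j\wedge\Omega^k}{dx^1\wedge dx^2\wedge dx^3}$, hence nonzero exactly under the general-position hypothesis. For $n>3$ it does \emph{not} attack the large determinant: instead it observes that any intersection $V_{\alpha\beta}$ of $n-3$ leaves of the normalised foliations is a totally geodesic $3$-plane (\LL \ref{L:projint}), on which both $W$ and any compatible $\Pi$ restrict; the restricted homogeneous system is precisely the $n=3$ system, whose already-established injectivity kills all unknowns with indices in $\{\alpha,\beta\}$, and varying $(\alpha,\beta)$ kills them all. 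This slicing to $n=3$ is the idea your proposal lacks, and it is what yields invertibility on the \emph{entire} general-position locus rather than on an unspecified Zariski-open piece of it.
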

\begin{proof}
 L'assertion que l'on veut démontrer étant de nature invariante, on ne perd pas en généralité en supposant que  $n$ des feuilletages de $W$ sont normalisés. On  fixe donc un système de coordonnées $x^1,\ldots,x^n$ sur $U$ dans lesquelles on a 
\begin{equation}
 \Omega^m= 
 \left\lbrace
                                         \begin{array}{rcl}
                                               dx^m-dx^n & \text{ pour}  & m = 1,\ldots,n-1\,, \vspace{0.08cm}\\
                                                dx^n & \text{ pour} & m=n\,, \vspace{0.08cm}\\
\sum_{a=1}^{n-1} \Omega^m_{a}\, dx^i -dx^n  &
\text{ pour} & m=n+1, n+2\,.\vspace{0.08cm}
                                              \end{array}
                                            \right.
\end{equation}
Ces normalisations impliquent que pour $m=1,\ldots,n+2$, une feuille de  $\mathcal F^m$ satisfait la condition 
($\mathcal T$) de la Section \ref{S:viaconnexionaffine} et donc 
est paramétrisée par une application 
$ \mathcal Z^m: t=(t^1,\ldots,t^{n-1})\mapsto (t,Z^m(t))$  où $Z^m$ vérifie 
\begin{equation}
 \label{E:eqdiffZm}
\frac{\partial Z^m}{\partial t^a}  = 
 \left\lbrace
                                         \begin{array}{lcl}
                                               \qquad  \delta^m_a  & \text{pour}  & m = 1,\ldots,n\,, \vspace{0.08cm}\\
                                                 \Omega^m_{a}\big(t,\mathcal Z^m(t)\big) & \text{pour} & m=n+1, n+2\, ,
                                              \end{array}
                                            \right.
\end{equation}
quel que soit $a$ compris entre $1$ et $n-1$. 

La prolongation différentielle au second ordre des équations  (\ref{E:eqdiffZm}) est immédiate. On obtient
\begin{equation}
 \label{E:eqdiffZm2}
\frac{\partial^2 Z^m}{\partial t^a\partial t^b }  = 
 \left\lbrace
                                         \begin{array}{lcl}
                                                 \qquad 0  & \text{si}  & m = 1,\ldots,n\,, \vspace{0.08cm}\\
                                                  X^m_{a}\big(  \Omega^m_{b}   \big)    & \text{si} & m=n+1, n+2\, ,
                                              \end{array}
                                            \right.
\end{equation}
pour tout $a,b$ tels que $1 \leq a\leq b<n$, où l'on a posé 
$X^m_{a}=\frac{\partial }{\partial x^a}+\Omega_{a}^m\frac{\partial }{\partial x^n}$ quel que soit $m$. 

Soit $\Pi_{ij}^k$ les coefficients de Thomas (dans les coordonnées $x^1,\ldots,x^n$) d'une connexion projective $\Pi$ sur $U$. Notons 
\begin{align*}
S(\Pi)_{ab}\, : \quad  
\frac{\partial^2 Z}{\partial t^a\partial t^b}
=\,& 
\frac{\partial Z}{\partial t^a}\frac{\partial Z}{\partial t^b}
 \sum_{c=1}^{n-1} A^c \frac{\partial Z}{\partial t^c}
+ \sum_{c=1}^{n-1} \Big(\frac{\partial Z}{\partial t^a}\,B_b^c+\frac{\partial Z}{\partial t^b} \,B_a^c\Big) \frac{\partial Z}{\partial t^c}   +  \sum_{c=1}^{n-1} C_{ab}^c\frac{\partial Z}{\partial t^c}  + D_{ab} 
\end{align*}
les équations différentielles scalaires du second ordre (pour $a,b$ tels que 
$1 \leq a\leq b \leq n-1$) qui constituent le système $(S_\Pi)$ des hypersurfaces totalement géodésiques pour $\Pi$. 
La \PP \ref{P:Spi} nous dit  que $\Pi$ est admissible par $W$ si et seulement si  les fonctions $Z_1,\ldots,Z_{n+2}$ satisfont $S(\Pi)$.  En d'autres termes:  la proposition  est démontrée si l'on établit l'existence et l'unicité de coefficients $A^c,B^c_a,C^c_{ab}$ et $D_{ab}$ holomorphes sur $U$ (pour  $a,b,c=1,\ldots,n-1$), symétriques en les indices $a$ et $b$, tels que $Z_m$ soit solution de toutes les équations $S(\Pi)_{ab}$, cela quel que soit $m\in \{1,\ldots,n+2\}$. Vu  (\ref{E:eqdiffZm}) et (\ref{E:eqdiffZm2}), cela équivaut à ce que soient vérifiées les relations 
\begin{align}
\label{E:ee}
0=\,& 
\delta_a^m\delta_b^m
 \sum_{c=1}^{n-1} A^c \delta_c^m 
+ \sum_{c=1}^{n-1} \Big(\delta^a_m\,B_b^c+\delta^b_m \,B_a^c\Big) 
\delta^c_m   +  \sum_{c=1}^{n-1} C_{ab}^c
\delta^c_m  + D_{ab} 
 \\
\mbox{et } \qquad  X_{ a}^\ell\big(  \Omega_{ b}^ \ell  \big) 
=\,& 
 \Omega_{ a}^ \ell   \Omega_{ b}   ^\ell
 \sum_{c=1}^{n-1} A^c \Omega_{ c}  ^ \ell
+ \sum_{c=1}^{n-1} \Big( \Omega_{ a}  ^\ell 
\,B_b^c+ \Omega_{ b}  ^ \ell
\,B_a^c\Big)
 \Omega_{ c} ^  \ell
   +  \sum_{c=1}^{n-1} C_{ab}^c
 \Omega_{ c}  ^ \ell
 + D_{ab}  \nonumber 
\end{align}
pour $m=1,\ldots,n $, $\ell \in \{n+1,n+2\}$ et $a,b$ tels que $1 \leq a\leq b \leq n-1$. \smallskip

Un simple décompte montre que les relations (\ref{E:ee}) induisent $n(n-1)(n+2)/2$ 
équations linéaires  non-homogènes en les coefficients $A^c,B^c_a,C^c_{ab}$ et $D_{ab}$.
 Or ceux-ci sont également au nombre de $n(n-1)(n+2)/2$ (en prenant en compte leurs symétries par rapport aux indices $a$ et $b$). Une conséquence immédiate  est qu'il suffit de montrer l'unicité des tels coefficients: l'existence sera alors assurée par un simple fait d'algèbre linéaire.  Mieux encore, il suffit de montrer que seuls les coefficients nuls satisfont (\ref{E:ee}) lorsqu'on suppose que les membres de gauche $X_{a}^\ell (  \Omega_{b}^\ell   )$ de ces équations sont  identiquement nuls pour $\ell=n+1$ et $\ell=n+2$. 

On suppose donc que les coefficients $A^c,B^c_a,C^c_{ab}$ et $D_{ab}$ vérifient 
(pour $m=1,\ldots,n $ et $\ell =n+1,n+2$) 
\begin{align}
\label{E:eee}
0=\,& 
\delta_a^m\delta_b^m
 \sum_{c=1}^{n-1} A^c \delta_c^m 
+ \sum_{c=1}^{n-1} \Big(\delta_a^m\,B_b^c+\delta_b^m \,B_a^c\Big) 
\delta_c^m   +  \sum_{c=1}^{n-1} C_{ab}^c
\delta_c^m  + D_{ab} 
 \\
\mbox{et } \qquad  0 
=\,& 
 \Omega_{ a}^\ell    \Omega_{ b}^ \ell  
 \sum_{c=1}^{n-1} A^c \Omega_{ c}^\ell   
+ \sum_{c=1}^{n-1} \Big( \Omega_{ a}^ \ell  
\,B_b^c+ \Omega_{ b}^\ell   
\,B_a^c\Big)
 \Omega_{\ell c}   
   +  \sum_{c=1}^{n-1} C_{ab}^c
 \Omega_{ c}^  \ell 
 + D_{ab}.  \nonumber 
\end{align}
 Montrons alors qu'on a forcément  $A^c=B^c_a=C^c_{ab}=D_{ab}=0$ quels que soient  $a,b,c$   entre $1$ et $n-1$ avec $a$ plus petit que $b$. Pour ce faire nous raisonnerons par récurrence sur la dimension $n$ de l'espace ambiant. \smallskip

Le cas $n=2$ est facile: la question se ramène à celle de la non-annulation d'un certain déterminant de Vandermonde. C'est un cas classique  traité dans \cite[\S 29]{bb}, référence à laquelle nous renvoyons le lecteur. 
\smallskip

Le cas suivant $n=3$ peut aussi être traité par un calcul direct que l'on va  détailler par souci de complétude et surtout parcequ'il est important pour la suite. On peut écrire les équations (\ref{E:eee}) sous la forme  matricielle
 \begin{equation} 
 \label{E:n=3EQUATION} 
 M_W  S=0 \end{equation}
avec 
 $${}^tS=\big( A^1,A^2,B_1^1,B_1^2,B_2^1,B_2^2,C_{11}^1,C_{12}^1,C_{22}^1,
C_{11}^2,C_{12}^2,C_{22}^2,D_{11},D_{12},D_{22}
\big)
$$
et 
\begin{equation*}
M_W=
\left(
 \begin{array}{ccccccccccccccc}
1& 0& 2& 0& 0& 0& 1& 0& 0& 0& 0& 0& 1& 0& 0\\ 
0& 0& 0& 0& 1& 0& 0& 1& 0& 0& 0& 0& 0& 1& 0\\
0& 0& 0& 0& 0& 0& 0& 0& 1& 0& 0& 0& 0& 0& 1\\
0& 0& 0& 0& 0& 0& 0& 0& 0& 1& 0& 0& 1& 0& 0\\
0& 0& 0& 1& 0& 0& 0& 0& 0& 0& 1& 0& 0& 1& 0\\ 
0& 1& 0& 0& 0& 2& 0& 0& 0& 0& 0& 1& 0& 0& 1\\
0& 0& 0& 0& 0& 0& 0& 0& 0& 0& 0& 0& 1& 0& 0\\ 
0& 0& 0& 0& 0& 0& 0& 0& 0& 0& 0& 0& 0& 1& 0\\ 
0& 0& 0& 0& 0& 0& 0& 0& 0& 0& 0& 0& 0& 0& 1\\ 
(\Omega^4_1)^3&  (\Omega^4_1)^2\Omega^4_2& 2\, (\Omega^4_1)^2 &  2\,\Omega^4_1\,\Omega^4_2& 
 0&  0&  \Omega^4_1& 
 0 &  0 &  0 &  0 & \Omega^1_2 & 1&  0&  0 \\
(\Omega^4_1)^2\,(\Omega^4_2)& \Omega^4_1\,(\Omega^4_2)^2&
 \Omega^4_1\,\Omega^4_2& (\Omega^4_2)^2& (\Omega^4_1)^2&\Omega^4_1\,\Omega^4_2 & 0& \Omega^4_1& 0& 0& \Omega^4_2& 0&0&1&0\\
\Omega^4_1(\Omega^4_2)^2&
 (\Omega^4_2)^3&
 0&
 0&
 2\,\Omega^4_1\,\Omega^4_2&
2\,(\Omega^4_2)^2
&
 0&
 0& 
\Omega^4_1
&
 0&
 0&
\Omega^4_2&
 0&
 0&
 1   \\
(\Omega^5_1)^3&  (\Omega^5_1)^2\Omega^5_2& 2\, (\Omega^5_1)^2 &  2\,\Omega^5_1\,\Omega^5_2& 
 0&  0&  \Omega^5_1& 
 0 &  0 &  0 &  0 & \Omega^5_2 & 1&  0&  0 \\
(\Omega^5_1)^2\,(\Omega^5_2)& \Omega^5_1\,(\Omega^5_2)^2&
 \Omega^5_1\,\Omega^5_2& (\Omega^5_2)^2& (\Omega^5_1)^2&\Omega^5_1\,\Omega^5_2 & 0& \Omega^5_1& 0& 0& \Omega^5_2& 0&0&1&0\\
\Omega^5_1(\Omega^5_2)^2&
 (\Omega^5_2)^3&
 0&
 0&
 2\,\Omega^5_1\,\Omega^5_2&
2\,(\Omega^5_2)^2
&
 0&
 0& 
\Omega^5_1
&
 0&
 0&
\Omega^5_2&
 0&
 0&
 1   \\
\end{array}\right).
\end{equation*}\smallskip

Par un calcul direct, on montre que 
\begin{equation}
 \label{E:n=3MATRICE}
{\rm det}\big( M_W\big) = 4 \!\!\prod_{ i<j<k} \Big( 
\frac{\Omega^i\wedge \Omega^j\wedge \Omega^k
}{dx^1\wedge dx^2\wedge dx^3} 
\Big)\,.
\end{equation}

L'hypothèse de position générale satisfaite par les feuilletages de $W$ se traduit par le fait que le terme de droite de l'égalité  (\ref{E:n=3MATRICE}) est un élément de ${\mathcal O}_U$ inversible. Par conséquent $M_W \in GL_{15}({\mathcal O}_U)$ et donc (\ref{E:n=3EQUATION}) n'admet que la solution triviale comme solution. Cela démontre la proposition dans le cas $n=3$. \smallskip

Considérons maintenant le cas général $n>3$ que l'on va traiter en se ramenant au cas $n=3$ par tranchage. 
Fixons $\alpha$ et $\beta$  tels que $1\leq \alpha< \beta <n$ et notons $V_{\alpha\beta}$ l'intersection de feuilles des feuilletages $\mathcal F_i$ pour $i\in \{1,\ldots,n-1\}\setminus \{\alpha,\beta\}$. C'est l'intersection d'un 3-plan affine avec $U$, sur laquelle $x^\alpha,x^\beta$ et $x^n$ induisent un système de coordonnées affines.  L'hypothèse de position générale satisfaite par les feuilletages de $W$ permet de considérer la trace $W_{\!{\alpha\beta}}=W|_{V_{\alpha\beta}}$ de $W$ sur $V_{\alpha\beta}$: c'est le 5-tissu induit par les 1-formes différentielles 
\begin{equation*}
 \label{E:}
\Omega^s|_{V_{\alpha\beta}} = 
 \left\lbrace
\begin{array}{cccrl}
dx^\alpha &       & -\;\;\;\; dx^n & \text{ \;si }  & s =  \alpha, \vspace{0.08cm}\\
& dx^\beta&-\;\;\;\; dx^n & \text{\; si }  & s = \beta, \vspace{0.08cm}\\
& &\:\:\:\;\;\;\; dx^n & \text{\; si }  & s =  n,\vspace{0.08cm}\\  
\Omega_{ \alpha}^s dx^\alpha\;\;\; +
    & \Omega_{ \beta}^sdx^\beta &-\;\;\;\; dx^n & \text{\; si }  & s \in  \{n+1,n+2\}.
\end{array}
\right.
\end{equation*}

Puisque  c'est une intersection d'hypersurfaces totalement géodésiques, la sous-variété $V_{{\alpha\beta}}$ est totalement géodésiques pour $\Pi$ (d'après le \LL \ref{L:projint}). On peut donc  considérer la restriction $\Pi_{\alpha\beta}=\Pi|_{V_{\alpha\beta}}$ de $\Pi$ à $V_{\alpha\beta}$. Clairement,  $\Pi_{\alpha\beta}$ est admissible pour $W_{\!{\alpha\beta}}$ et par voie de  conséquence,  quelles que soient les constantes $\tau^1,\ldots,\tau^{n-1}$, les applications 
$$ 
(t^\alpha,t^\beta)\longmapsto Z_p\big(\tau^1,\ldots,\tau^{\alpha-1},t^\alpha,\tau^{\alpha+1},\ldots,     
\tau^{\beta-1},
t^\beta,\tau^{\beta+1},\ldots,\tau^{n-1}\big)
$$
(pour $p\in \{ \alpha,\beta,n,n+1,n+2\}$) vérifient le système formé par  les restrictions à $V_{\alpha\beta}$ des équations (\ref{E:eee}). 
On vérifie  immédiatement que celui-ci n'est rien d'autre que  le système (\ref{E:eee}) associé à la connexion projective $ \Pi_{\alpha\beta}$.  
 On se trouve donc ramené au cas $n=3$ que l'on a pu traiter de manière directe. 
 On montre ainsi que 
$ A^\gamma=B^\gamma_\beta=C^\gamma_{\delta\epsilon}= D_{\delta\epsilon}=0  $
pour tout  $\gamma,\delta,\epsilon \in \{ \alpha,\beta\}$. Cela étant valable quels
que soient $\alpha$ et $\beta$, il en découle que (\ref{E:eee}) n'admet que la solution triviale. On en déduit la proposition. 
\end{proof}
Soit $W$ un $(n+2)$-tissu. Notons $\Pi_W$ la connexion projective admissible par $W$. Si $\varphi$ désigne un (germe de) biholomorphisme, il est immédiat que $\varphi^*(\Pi_W)$ est admissible par $\varphi^*(W)$. Par unicité, il en découle  que 
$$ \varphi^*(\Pi_W)= \Pi_{ \varphi^*(W) } \, . $$
En d'autres termes: $\Pi_W$ est canoniquement attachée à $W$. 
\begin{defi}
 La connexion projective $\Pi_W$ est la {\it connexion projective canonique} du tissu $W$. La {\it courbure de Cartan} de $W$ notée $\mathfrak C_W$ est la courbure $d\omega+ \omega\wedge  \omega \in \Omega^2_M \otimes \mathfrak{sl}_{n+1}(\mathbb C)$ de la connexion projective normale associée à $\Pi$. 
\end{defi}
De la  \PP \ref{P:existePI}, il vient que $\mathfrak C_W$ est attachée à $W$ de façon invariante:  on a $$ \varphi^*(\mathfrak C_W)= \mathfrak C_{ \varphi^*(W) }\,  $$ 
pour tout (germe de) biholomorphisme $\varphi$.

Vu l'\EE \ref{Ex:platelin}, la connexion projective canonique d'un $(n+2)$-tissu linéaire est la connexion projective plate.  Comme conséquence, il vient que $\Pi_W$ est intégrable si $W$ est linéarisable. La réciproque est tout aussi immédiate. On a donc le 
\begin{theo} 
\label{T:MAIN}
Les conditions suivantes sont équivalentes: 
\begin{enumerate}
 \item le $(n+2)$-tissu $W$ est linéarisable;
\item la connexion projective $\Pi_W$ est intégrable;
\item la courbure de Cartan $\mathfrak C(W)$ est identiquement nulle.
\item le tenseur de Weyl $\mathfrak W_{\Pi_W}$ est identiquement nul (lorsque $n>2$);
\item le tenseur de courbure  $\mathfrak P _{\Pi_W}$ est identiquement nul (lorsque $n=2$).
\end{enumerate}
\end{theo}

\subsection{Linéarisabilité  des $d$-tissus en dimension $n$}
Il est  facile d'adapter les résultats de la section précédente au cas des tissus de codimension 1 formés de plus de  $n+2$ feuilletages. On fixe $d\geq n+2$ dans ce qui suit ainsi qu'un $d$-tissu en hypersurfaces  $W_d=(\mathcal F^1,\ldots,\mathcal F^d)$ sur $U$. Pour   $\ell$ compris entre  $n+2$ et $d$,  on pose  
$$W(\ell)=(\mathcal F^1,\ldots,\mathcal F^{n+1},\mathcal F^\ell).$$
Chacun des $W(\ell)$ est un $(n+2)$-tissu sur $U$. On note $\Pi(\ell)$ sa 
connexion projective canonique. Il est clair que  $W$ est linéarisable si et seulement si il existe une connexion projective intégrable admissible pour $W$ et donc pour tous les $W(\ell)$. On en déduit  le 
\begin{coro} 
\label{C:main2}
Le $d$-tissu $W_d$ est linéarisable si et seulement si 
\begin{enumerate}
\item la connexion projective $\Pi(n+2)$ est plate;
 \item les connexions projectives $\Pi(n+2),\ldots, \Pi(d)$ coïncident.
\end{enumerate}
\end{coro}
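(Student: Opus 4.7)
The plan is to obtain both implications as direct consequences of Proposition~\ref{P:existePI} (existence, unicity and invariance of the canonical projective connection of an $(n+2)$-tissu), combined with the main Theorem~\ref{T:MAIN} and the fact that the flat projective connection is compatible with any linear tissu (Example~\ref{Ex:platelin}).

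For the direct implication, assume that $W_d$ is linearizable and let $\varphi:(U,u)\to (\mathbb C^n,\varphi(u))$ be a germ of biholomorphism such that $\varphi_*(W_d)$ is linear. Then for every $\ell\in\{n+2,\dots,d\}$ the sub-$(n+2)$-tissu $\varphi_*(W(\ell))$ is also linear, hence $W(\ell)$ is linearizable. Theorem~\ref{T:MAIN} then yields that each $\Pi(\ell)$ is flat, which is~(1). To obtain~(2), I would invoke the invariance relation $\varphi^*(\Pi_{\varphi_*(W(\ell))})=\Pi(\ell)$ established after Proposition~\ref{P:existePI}, together with the fact that the flat projective connection $\Pi_{\text{plat}}$ of $\mathbb C^n$ is, by Example~\ref{Ex:platelin} and the unicity statement in Proposition~\ref{P:existePI}, the canonical connection of each linear $(n+2)$-sub-tissu $\varphi_*(W(\ell))$. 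Hence $\Pi(\ell)=\varphi^*(\Pi_{\text{plat}})$ for every $\ell$, and the $\Pi(\ell)$ all coincide.

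For the reverse implication, assume~(1) and~(2) and denote by $\Pi$ the common value of $\Pi(n+2)=\cdots=\Pi(d)$. The key observation is that $\Pi$ is compatible with the whole tissu $W_d$: indeed each foliation $\mathcal F^i$ of $W_d$ appears in at least one of the $W(\ell)$ (either among $\mathcal F^1,\dots,\mathcal F^{n+1}$, or as $\mathcal F^\ell$ itself), and by definition of the canonical connection its leaves are totally geodesic for $\Pi(\ell)=\Pi$. Since $\Pi$ is flat by hypothesis, Corollary~\ref{C:caractCNXIONprojoplate1} (equivalently, the integrability criterion of Theorem~\ref{T:caractrisationCARTANplat}) furnishes a germ of biholomorphism $\varphi$ such that $\varphi_*(\Pi)=\Pi_{\text{plat}}$. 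Because a biholomorphism sends totally geodesic hypersurfaces of $\Pi$ onto totally geodesic hypersurfaces of $\varphi_*(\Pi)=\Pi_{\text{plat}}$, i.e., onto pieces of affine hyperplanes, the tissu $\varphi_*(W_d)$ is linear, which proves that $W_d$ is linearizable.

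There is no real obstacle here: once Proposition~\ref{P:existePI} and Theorem~\ref{T:MAIN} are available, the corollary reduces to noting that a connection compatible with every sub-$(n+2)$-tissu is compatible with the whole $d$-tissu, which is obvious from the definition of compatibility since each leaf of $W_d$ is a leaf of some $W(\ell)$.
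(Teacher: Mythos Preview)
Your proof is correct and follows essentially the same approach as the paper, which derives the corollary in one sentence from the observation that $W_d$ is linearizable if and only if there exists an integrable projective connection compatible with $W_d$, hence with each $W(\ell)$. You have simply spelled out the details of both implications, invoking the uniqueness and invariance from Proposition~\ref{P:existePI} exactly as intended.
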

Il n'est pas difficile d'exprimer la condition  (2) de ce corollaire en terme d'invariants différentiels scalaires. 
Pour tout $i,j,k=1,\ldots,n$ et quel que soit $\ell$ 
compris entre  $n+2$ et $d$, notons $ \Pi(\ell)^k_{ij}$ (resp. 
$\overline{\Pi}(\ell)^k_{ij}$) 
les coefficients de Thomas de la connexion projective $\Pi(\ell)$ relativement à un système  de coordonnées $x^1,\ldots,x^n$ (resp. $\overline{x}^1,\ldots, \overline{x}^n$) sur $U$.
 Quel que soit $\ell$ strictement plus grand que $n+2$ et pour tout $i,j,k=1,\ldots,n$, posons 
$$ \Sigma(\ell)^k_{ij}= \Pi(\ell)^k_{ij}-\Pi(n+2)^k_{ij} \qquad \qquad  \mbox{ et } \qquad \qquad 
\overline{\Sigma}(\ell)^k_{ij}= \overline{\Pi}(\ell)^k_{ij}-\overline{\Pi}(n+2)^k_{ij}\,. 
$$

 En utilisant (\ref{E:transocoeffTHOMAS}), on établit que les quantités 
$ \Sigma(\ell)^k_{ij}$ et $\overline{\Sigma}(\ell)^k_{ij}$ s'expriment les unes en fonction des autres via  les formules  de transformation 
$$  \overline{\Sigma}(\ell)^{\gamma}_{\alpha\beta}=
\Sigma(\ell)^k_{ij}
\frac{\partial x^i}{ \partial \overline{x}^\alpha}
\frac{\partial x^j}{ \partial \overline{x}^\beta}
\frac{ \partial \overline{x}^\gamma}{\partial x^k}\, .
 $$

En d'autres termes, les quantités $ \Sigma(\ell)^k_{ij}$ sont les composantes (dans les coordonnées $x^1,\ldots,x^n$) d'un tenseur 
\begin{equation*}
 \Sigma(\ell)\in T_U\otimes {\rm Sym}^2( \Omega^1_U)
\end{equation*}
 canoniquement associé à la paire $(W(n+2),\mathcal F_\ell)$. Par définition, $\Sigma(\ell)$ est identiquement nul si et seulement si les connexions projectives $\Pi(n+2)$ et $\Pi(\ell)$ coïncident. On peut alors reformuler le \CC \ref{C:main2} en des termes purement invariants:
\begin{coro} 
Le $d$-tissu $W_d$ est linéarisable si et seulement si
\begin{enumerate}
\item la courbure de Cartan $\mathfrak C\big(W(n+2)\big)$ est identiquement nulle;
\item les tenseurs  $\Sigma(n+2),\ldots,\Sigma(d)$ sont tous identiquement nuls.
\end{enumerate}
\end{coro}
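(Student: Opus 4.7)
Ce corollaire se d�duit directement d'une traduction tensorielle du \CC \ref{C:main2}, et la strat�gie se d�compose naturellement en deux �tapes ind�pendantes, correspondant respectivement aux deux conditions de ce corollaire.

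D'abord, en appliquant le \TT \ref{T:MAIN} au $(n+2)$-tissu auxiliaire $W(n+2)$, on obtient l'�quivalence entre l'int�grabilit� (c'est-�-dire la platitude) de la connexion projective canonique $\Pi(n+2)$ et la nullit� identique de la courbure de Cartan $\mathfrak C(W(n+2))$ qui lui est attach�e. La condition (1) du \CC \ref{C:main2} se reformule donc exactement comme la condition (1) de l'�nonc�.

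Ensuite, on exploite la construction des tenseurs $\Sigma(\ell)$ pr�sent�e juste avant le corollaire: les lois de transformation (\ref{E:transocoeffTHOMAS}) pour les coefficients de Thomas de deux connexions projectives partagent exactement les m�mes termes inhomog�nes, ceux-ci ne d�pendant que du changement de coordonn�es. Par soustraction, ces contributions s'annulent, ce qui assure que les $\Sigma(\ell)^k_{ij}=\Pi(\ell)^k_{ij}-\Pi(n+2)^k_{ij}$ se transforment tensoriellement et d�finissent une section globale $\Sigma(\ell)\in T_U\otimes {\rm Sym}^2(\Omega^1_U)$ invariablement attach�e au couple $(W(n+2),\mathcal F^\ell)$. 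Par construction, $\Sigma(\ell)\equiv 0$ si et seulement si $\Pi(\ell)=\Pi(n+2)$, de sorte que la condition (2) du \CC \ref{C:main2} �quivaut � l'annulation identique de tous les $\Sigma(\ell)$ pour $\ell\in\{n+3,\ldots,d\}$ (le tenseur $\Sigma(n+2)$ �tant trivialement nul).

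La conclusion r�sulte alors imm�diatement de l'�nonc� du \CC \ref{C:main2}. Je ne vois pas ici d'obstacle s�rieux: le travail conceptuel essentiel a d�j� �t� accompli en amont (existence et unicit� de la connexion projective canonique via la \PP \ref{P:existePI}, crit�re de platitude via la courbure de Cartan par le \TT \ref{T:MAIN}, caract�re tensoriel invariant de la diff�rence de deux connexions projectives). Le seul point m�ritant v�rification est le calcul explicite montrant que les parties non-tensorielles de  (\ref{E:transocoeffTHOMAS}) se compensent par soustraction, mais c'est un calcul direct et sans subtilit�.
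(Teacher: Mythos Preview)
Your proposal is correct and follows exactly the approach the paper intends: the corollary is simply a tensorial reformulation of the \CC \ref{C:main2}, obtained by invoking the \TT \ref{T:MAIN} for condition (1) and the definition of $\Sigma(\ell)$ (together with the transformation-law argument already given in the text) for condition (2). Your remark that $\Sigma(n+2)$ is trivially zero is also apt, since the paper only defines $\Sigma(\ell)$ for $\ell>n+2$.
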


On  en déduit une généralisation en dimension $n$ arbitraire d'un résultat classique de \cite[\S29]{bb}:
\begin{coro}
\label{C:nombrelin}
 Soit $d\geq n+2$. Modulo les transformations projectives, 
 un $d$-tissu en hypersurfaces dans un espace de dimension $n$ admet au plus une linéarisation. 
\end{coro}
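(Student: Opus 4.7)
Le plan est de ramener l'unicit� d'une lin�arisation (modulo transformations projectives) � l'unicit� de la connexion projective compatible �tablie � la \PP \ref{P:existePI}, puis d'invoquer la description classique du groupe des automorphismes (locaux) de la connexion projective plate.

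Je commencerai par supposer que $W_d$ admet deux lin�arisations, c'est-�-dire deux germes de biholomorphismes $\varphi_1,\varphi_2: (U,u_0)\rightarrow (\mathbb C^n, \varphi_i(u_0))$ tels que les tissus $(\varphi_i)_*(W_d)$ soient lin�aires pour $i=1,2$. Notant $\Pi_0$ la connexion projective plate sur $\mathbb C^n$, l'\EE \ref{Ex:platelin} assure que $\Pi_0$ est compatible avec les deux tissus lin�aires $(\varphi_i)_*(W_d)$. Par fonctorialit� imm�diate (le fait qu'un biholomorphisme envoie sous-vari�t� totalement g�od�sique sur sous-vari�t� totalement g�od�sique pour la connexion tir�e en arri�re), les deux connexions projectives $\varphi_i^*(\Pi_0)$ sont alors compatibles avec $W_d$ lui-m�me, et donc avec n'importe lequel de ses $(n+2)$-sous-tissus (un tel sous-tissu existe puisque $d \geq n+2$).

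J'appliquerai ensuite la \PP \ref{P:existePI} � un tel sous-tissu: l'unicit� de la connexion projective compatible forcera $\varphi_1^*(\Pi_0)=\varphi_2^*(\Pi_0)$ (les deux co�ncident en fait avec la connexion projective canonique du sous-tissu choisi). En posant $\psi=\varphi_2\circ \varphi_1^{-1}$, cette identit� se traduira par $\psi^*(\Pi_0)=\Pi_0$, autrement dit $\psi$ pr�servera la connexion projective plate. D'apr�s la \DD \ref{D:defsousvarTGprojo}, un tel $\psi$ doit envoyer les courbes g�od�siques de $\Pi_0$ sur elles-m�mes; comme ces g�od�siques ne sont autres que les germes de droites affines de $\mathbb C^n$, l'application $\psi$ pr�serve l'alignement des points, et le th�or�me fondamental de la g�om�trie projective permettra alors de conclure que $\psi$ est la restriction d'une transformation projective, d'o� le r�sultat voulu.

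Le point qui demande le plus d'attention sera cette derni�re identification, o� l'on quitte momentan�ment le formalisme des connexions pour un argument de g�om�trie projective classique; mais cet argument est suffisamment standard, et tout le travail technique aura d�j� �t� fait dans la preuve de la \PP \ref{P:existePI}.
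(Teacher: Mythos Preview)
Votre esquisse est correcte et suit essentiellement la m�me d�marche que celle de l'article: unicit� de la connexion projective compatible (via la \PP \ref{P:existePI} appliqu�e � un $(n+2)$-sous-tissu) pour forcer $\psi=\varphi_2\circ\varphi_1^{-1}$ � pr�server la connexion projective plate, puis th�or�me fondamental de la g�om�trie projective pour conclure. La seule diff�rence est de pr�sentation: l'article reformule d'embl�e en supposant $W_d$ et $\varphi^*(W_d)$ tous deux lin�aires, ce qui �vite d'introduire deux lin�arisations s�par�es, mais l'argument est le m�me.
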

\begin{proof} 
 Il faut (et il suffit de) montrer que si  $W_d$ et $\varphi^*(W_d)$ sont des tissus linéaires,  alors $\varphi$ est la restriction d'une transformation projective.  Par hypothèse, la connexion projective plate est admissible par $W_d$ et  par $\varphi^*(W_d)$. Chacun d'eux  admettant au plus une connexion admissible, on en déduit que $\varphi$ laisse invariante la stucture projective canonique de $\mathbb P^n$. En  termes plus élémentaires: $\varphi$ transforme les droites projectives en droites projectives. Il est classique  qu'un tel biholomorphisme  n'est rien d'autre que la restriction d'une transformation projective, d'où le corollaire. 
\end{proof}

\begin{rema}
 L'énoncé du corollaire précédent n'est pas valide lorsque $d=n+1$. En effet, soit $C$ une courbe algébrique dans $\mathbb P^n$, irréductible, non-dégénérée et de degré $n+1$. On sait qu'alors son genre arithmétique vérifie $p_a(C)\leq 1$. Supposons qu'il y ait égalité (de telles courbes existent quel que soit $d$; $C$ est alors dite {\og \!de Castelnuovo\!\fg}). Via la dualité projective, il est classique d'associer à $C$ un $(n+1)$-tissu linéaire  $W_C$ {\og \!algébrique\!\fg} de codimension 1 défini sur un ouvert de Zariski de l'espace projectif dual $ \check{\mathbb P}^n$.  Par le théorème d'addition d'Abel,  la trace (par rapport à la série linéaire hyperplane) de ``la'' 1-forme régulière non-nulle sur $C$  est identiquement nulle. On en déduit que le rang de $W_C$ est 1, c'est-à-dire que l'on peut trouver  des intégrales premières (locales) $u_1,\ldots,u_{n+1}$ des feuilletages  qui composent $W_C$ (localement) telles que  $ u_1+\cdots +u_{n+1}\equiv 0$. On vérifie alors que l'application $U=(u_1,\ldots,u_{n})$ tranforme $W_C$ en le $(n+1)$-tissu $W_0$ formé des pinceaux d'hyperplans parallèles  $\{x^i=cst.\}$ (pour $i=1,\ldots,n$) et 
$\{x^1+\cdots+x^n=cst.\}$.  En utilisant le théorème d'Abel et sa réciproque, on montre ainsi qu'il y a une correspondance biunivoque entre l'espace des linéarisations locales  de $W_0$ modulo composition à gauche par une  transformation projective et l'espace des courbes algébriques réduites de $\mathbb P^n$, non dégénérées, de degré $n+1$ et de genre arithmétique 1. Comme conséquence directe vient que $W_0$ admet un nombre infini (une famille continue en fait) de linéarisations projectivement indépendantes. 

Dans le cas des tissus plans ($n=2$), Gronwall a  conjecturé en 1912 qu'un 3-tissu admet au plus une classe d'équivalence projective de linéarisations lorsqu'il est de rang zéro. Si l'on sait maintenant que ce nombre de classes  de linéarisations est fini (voir \cite{boruvka,vaona,smirnov,grifone,goly}), la {\og Conjecture de Gronwall \fg} n'est toujours pas démontrée à ce jour. On peut la généraliser à la dimension $n>1$ arbitraire: un $(n+1)$-tissu en hypersurfaces non-parallélisable admet-il  au plus une classe d'équivalence projective de linéarisations? Il semble que l'on puisse se ramener au cas plan par tranchage
\end{rema}

\section{Généralisations}
\label{S:gen}

\subsection{Un critère général de linéarisation}
\label{S:critergeneraldeLIN}
La méthode qui nous a permis d'obtenir le critère de linéarisabilité du \TT \ref{T:MAIN} pour les tissus en hypersurfaces est en fait très générale puisqu'elle s'applique à des objets plus généraux que les tissus de codimension 1. 

Soient $a_1,\ldots,a_{n-1}$  des entiers strictements positifs. Posons $\underline{a}=(a_1,\ldots,a_{n-1})$ et $d=\sum_{i=1}^{n-1} a_i$. 
Décidons d'appeler un {\it $\underline{a}$-tissu généralisé} sur $U\subset \mathbb C^n$ la donnée notée $W_{\underline{a}}$ de $d$-feuilletages ${\mathcal F}_1,\ldots,\mathcal F_d$ sur $U$ dont exactement $a_c$ sont de codimension $c$, cela pour tout entier $c$ compris entre 1 et $n-1$. C'est une notion plus générale que celle de tissu mixte: on ne demande pas forcément que  les espaces tangents des feuilletages de $W_{\underline{a}}$ soient en position générale. 

Cette définition étant posée, il se trouve que l'approche utilisée pour établir le \TT \ref{T:MAIN} s'adapte quasiment sans changements à la question de caractériser les tissus généralisés linéarisables, puisqu'il est toujours aussi évident que 
\begin{quote}
 {\it un tissu généralisé est linéarisable si et seulement si il existe une connexion projective intégrable qui lui est compatible.}
\end{quote}

La question de l'intégrabilité d'une connexion projective compatible avec un tel $W_{\underline{a}}$ étant classique et bien comprise, le problème de caractériser les tissus généralisés linéarisables se ramène donc à la question de savoir s'il existe des coefficients de Thomas $\Pi_{ij}^k$ tels que  pour tout $m\in \{1,\ldots,n-1\}$, les feuilles de dimension $m$ d'un tel tissu  $W_{\underline{a}}$ donné sont solutions du système différentiel $(S_\Pi^m)$ de la \PP \ref{P:11projo}. C'est une question d'algèbre linéaire sur l'anneau de fonctions holomorphes $\mathcal O_U$. 
 En effet, ayant fixé un système de coordonnées $x^1,\ldots,x^n$ sur $U$, une connexion projective $\Pi$ sur $U$ est définie par ses coefficients de Thomas. L'espace $\{ (\Pi_{ij}^k)_{i,j,k=1}^n \, \}$ de ces coefficients  est un $\mathcal O_U$-module libre de rang $(n-1)n(n+2)/2$. Un simple décompte montre que la condition pour que les feuilles d'un feuilletage de codimension $c$ sur $U$ soient totalement géodésiques pour $\Pi$ consiste en 
\begin{equation*}
 \label{E:pnk}
 p(n,c)=\frac{c (n-c)(n-c+1)}{2}
\end{equation*}
équations scalaires non-homogènes en les  $\Pi_{ij}^k$ (à coefficients dans $\mathcal O_U$). La condition pour qu'une connexion projective $\Pi$ soit admissible par un tissu généralisé $W_{\underline{a}}$ va donc consister en un système $S(W_{\underline{a}})$  formé de 
$$\mu(\underline{a})=\sum_{c=1}^{n-1} p(n,c)\, a_c $$
équations scalaires du même type. Si les entiers $a_1,\ldots,a_{n-1}$  sont tels que 
$\mu(\underline{a})   \geq \frac{1}{2}{(n-1)n(n+2)}$, l'on s'attend à ce que le système $S(W_{\underline{a}})$ soit surdéterminé et donc n'admette au plus qu'une seule solution. Si une solution existe et est unique, on la note $\Pi_{W_{\underline{a}}}$. Elle est alors canoniquement attachée à $W_{\underline{a}}$ (par unicité) et comme pour les tissus de codimension 1,  $W_{\underline{a}}$ est linéarisable si et seulement si la courbure $\mathfrak C( \Pi_{W_{\underline{a}}})$ est identiquement nulle.

\subsection{Exemples de tissus  en courbes non-linéarisables}
Afin d'illustrer l'approche présentée ci-dessus, 
 donnons nous un  paramètre  $\epsilon \in \mathbb C$ 
et considérons le 8-tissu en courbes $W_8^\epsilon$ dans $\mathbb C^3$   dont les feuilles sont les courbes intégrales des champs de vecteurs 
$\frac{\partial }{\partial x^1}+Z_i^2\frac{\partial }{\partial x^2}+Z_i^3\frac{\partial }{\partial x^3}$
 dont les composantes $Z_i=(1,Z_i^2,Z_i^3)$ sont 
\begin{align*}
Z_1=& \, (1,0,0)
&&  Z_2= (1,1,0)
&& Z_3= (1,0,1) &&  Z_4= (1,1,1) \\
Z_5=& (1,-1,1)&& 
Z_6= (1,1,-1)  && 
Z_7= (1,-1,-1)&& 
Z_8=  \Big(1,\epsilon\frac{y}{x} ,   \frac{z}{x}\Big)\,.
\end{align*}

Une condition nécessaire pour que  $W_8^\epsilon$ soit linéarisable est qu'il existe une connexion projective qui lui soit compatible. De façon plus explicite, cela équivaut à l'existence de fonctions  holomorphes $A^k,B^k_i, C^k_{ij},D_{ij}$ pour $i,j,k=2,3$, symétriques en les indices inférieurs $i$ et $j$, telles que les feuilles de $W_8^\epsilon$ soient toutes solutions des  équations différentielles du second  ordre (pour $\ell=2,3$): 
\begin{equation*}
\qquad
\frac{d^2 Z^\ell}{dt^2}= A^\ell+\sum_{i=2}^3 B^\ell_{i}
\frac{d Z^i}{dt}
+ \sum_{i,j=2}^3 C^\ell_{ij} \frac{d Z^i}{dt}\frac{d Z^j}{dt}
+\frac{d Z^\ell}{dt} \bigg(
\sum_{i,j=2}^3 D_{ij}\frac{d Z^i}{dt}\frac{d Z^j}{dt} \bigg)\,. 
\end{equation*}

Un calcul immédiat montre que de telles fonctions $A^k,B^k_i, C^k_{ij},D_{ij}$ existent si et seulement le paramètre  $\epsilon$ vaut $1$. En conséquence, le tissu $W_8^\epsilon$ n'est pas linéarisable si  $\epsilon\neq 1$. C'est en fait une condition nécessaire et suffisante dans cet exemple: on vérifie immédiatement que $W_8^1$ est linéarisable (il est linéaire!).

\subsection{Un exemple sur les espaces de modules de courbes de genre 0} Soit $m$ un entier plus grand que 4.  L'on désigne par ${M}_{0,m}$ l'espace de module des courbes algébriques de genre $0$ avec $m$ points distincts marqués et numérotés: en d'autres termes, c'est l'espace des configurations projectives de $m$ points distincts de ${\mathbb P}^1$. C'est une variété quasi-projective lisse de dimension $m-3$.  

L'opération  d'oubli d'un point d'une configuration induit  un morphisme 
${M}_{0,m}\rightarrow {M}_{0,m-1}$. Un élément de 
${M}_{0,m}$ étant composé de $m$ points, il existe 
 $m$ tels morphismes d'oublis sur ${M}_{0,m}$. Plus généralement, pour $k$ compris entre 1 et $m-1$, il existe ${ m \choose k } $ morphismes d'oublis de $k$ points ${M}_{0,m}\rightarrow {M}_{0,m-k}$. Si $k<m-3$, les ensembles de niveaux d'un tel morphisme d'oublis  forment un feuilletage de codimension $m-3-k$ sur ${M}_{0,m}$. On vérifie sans difficulté que les feuilletages de codimension $k$ induits par deux de ces morphismes  sont différents. Par conséquent, pour tout $k\in \{1,\ldots,m-4\}$, l'ensemble des feuilletages associés aux oublis de $k$ points définissent un ${ m \choose k }$-tissu généralisé de codimension $c=m-3-k$ sur 
${M}_{0,m}$, que l'on note $W^{c}(M_{0,m})$. \medskip

Le plus élémentaire de ces tissus est  le 5-tissu plan $W^1(M_{0,5})$ qui n'est rien d'autre que le célèbre {\og tissu de Bol\fg} considéré  dans \cite{bol,gelfandmacpherson,damiano}. Les tissus généralisés en hypersurfaces $W^1(M_{0,m})$    naturellement définis sur les espaces de modules $M_{0,m}$ ont été considérés en premier lieu par Burau dans  \cite{burau} (sous une forme différente mais équivalente). Les $m$-tissus en courbes $W^{m-4}(M_{0,m})$  sont introduits et étudiés dans \cite{damiano}, où il est montré que ceux-ci sont de rang maximal ({\it cf.} \cite{damiano} pour des précisions). Ces résultats font appara\^itre les tissus  $W^c(M_{0,m})$ (pour $m\geq 3$ et $c=1,\ldots,m-4$) comme des tissus potentiellement intéressants du point de vue de leur rang et de leurs relations abéliennes, que l'on peut suspecter être liées aux polylogarithmes. 
\smallskip

On  s'intéresse ci-dessous à la question de savoir si certains des $W^c(M_{0,m})$ sont linéarisables. Plus précisément, nous montrons que les tissus en hypersurfaces $W^1(M_{0,m})$ ne le sont pas.  On notera que le \TT \ref{T:MAIN} ne s'applique pas aux tissus $W^1(M_{0,m})$. En effet, ce sont des tissus généralisés: leurs normales  ne vérifient pas  l'hypothèse de position générale définie dans la Section \ref{S:notations} (sauf lorsque $m=5$). \smallskip

L'étude de la linéarisabilité des tissus $W^1(M_{0,m})$ est rendue particulièrement facile si l'on applique la proposition suivante qui est une conséquence immédiate des résultats de la Section \ref{S:restriction}.
\begin{prop}
\label{P:nn}
Si un $d$-tissu $W_d$  en hypersurfaces est linéarisable, alors tout tissu obtenu par tranchage de $W_d$ le long des feuilles de l'un ou plusieurs des   feuilletages qui le constituent est aussi linéarisable. 
\end{prop}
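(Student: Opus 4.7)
Mon approche reposera sur le formalisme des connexions projectives de la Section~\ref{S:restriction}. Je commencerai par traduire l'hypoth�se de lin�arisabilit� de $W_d$ en l'existence d'une connexion projective int�grable qui lui est compatible: soit $\varphi$ un biholomorphisme (local) redressant $W_d$ sur un tissu lin�aire; en vertu de l'\EE \ref{Ex:platelin} et de l'invariance des notions consid�r�es par biholomorphisme, le tir�-en-arri�re $\Pi:=\varphi^*(\Pi_{\rm plat})$ de la connexion projective plate sur $\mathbb C^n$ est une connexion projective int�grable sur $U$ compatible avec $W_d$, c'est-�-dire que toutes les feuilles des $\mathcal F^i$ sont totalement g�od�siques pour~$\Pi$.

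Je d�crirai ensuite le tissu tranch�. Fixons des indices $i_1<\cdots<i_s$ dans $\{1,\ldots,d\}$, ainsi que des feuilles $L_{i_1},\ldots,L_{i_s}$ des feuilletages $\mathcal F^{i_1},\ldots,\mathcal F^{i_s}$ le long desquelles on tranche. Notons $V=L_{i_1}\cap\cdots\cap L_{i_s}$ et $W_d'$ le tissu tranch� form� sur $V$ par les traces des feuilletages non tranch�s. La condition de position g�n�rale inh�rente � la d�finition d'un tissu garantit que $V$ est lisse. Par applications r�p�t�es de l'assertion~(2) du \LL \ref{L:projint}, $V$ sera totalement g�od�sique pour $\Pi$; l'assertion~(1) du m�me lemme assurera alors que la restriction $\Pi|_V$ est une connexion projective int�grable sur~$V$.

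Il me restera � contr�ler la compatibilit� de $\Pi|_V$ avec $W_d'$. Pour toute feuille $L_j$ d'un feuilletage non tranch�, l'assertion~(2) du \LL \ref{L:projint} appliqu�e � $L_j$ et $V$ montrera que $L_j\cap V$ est totalement g�od�sique pour $\Pi$, et donc aussi pour $\Pi|_V$ (puisqu'une g�od�sique de $\Pi|_V$ tangente � $L_j\cap V$ est a fortiori une g�od�sique de $\Pi$ tangente � $L_j\cap V$); ainsi, $\Pi|_V$ sera compatible avec $W_d'$. Le \CC \ref{C:caractCNXIONprojoplate1} fournira enfin des coordonn�es sur $V$ dans lesquelles $\Pi|_V$ est la connexion plate, dont les sous-vari�t�s totalement g�od�siques sont pr�cis�ment les sous-espaces affines; les feuilles de $W_d'$ y appara�tront comme des hyperplans affines, d'o� la lin�arisabilit� cherch�e. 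Aucun obstacle s�rieux n'est � pr�voir: l'unique point technique, � savoir la r�gularit� et la g�od�sie totale des intersections successives de feuilles, est automatique par la position g�n�rale, en accord avec l'indication de l'auteur que ce r�sultat est une cons�quence imm�diate des r�sultats de la Section~\ref{S:restriction}.
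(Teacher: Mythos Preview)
Your proof is correct and follows precisely the route the paper indicates: it uses the restriction results of Section~\ref{S:restriction} (in particular the \LL \ref{L:projint}) to show that the induced projective connection $\Pi|_V$ is integrable and compatible with the sliced web, whence linearizability follows. This is exactly the ``cons�quence imm�diate'' the author has in mind, and your argument fills in the details faithfully.
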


Il est classique que le tissu de Bol $W^1(M_{0,5})$ n'est pas linéarisable ({\it cf.} \cite[p. 262]{bb}). On suppose donc $m>5$ dans  ce qui suit. 
Les applications d'oublis de $m-4$ points $M_{0,m}\rightarrow M_{0,4}$ sont autant d'intégrales premières des 
feuilletages de $W^1(M_{0,m})$. Chacune se factorise (de plusieurs façons possibles) par une application d'oublis de $m-5$ points:
\[
\xymatrix@R=0.1cm@C=0.8cm{  
 M_{0,m}  \ar[rr] \ar[dr]  &  &  M_{0,4}\, .\\
  &  M_{0,5}  \ar[ru]&  }\]
On en déduit que $W^1(M_{0,m})$ admet comme sous-5-tissu le tissu   $\pi^* W^1(M_{0,5})$ obtenu en tirant en arrière le tissu de Bol $W^1(M_{0,5})$  par une application d'oublis $\pi:M_{0,m}\rightarrow M_{0,5}$. Si l'on considère alors le tissu obtenu en prenant la restriction de  $W^1(M_{0,m})$ le long d'un  intersection convenable de $m-5$ de ses feuilles, on obtient un tissu plan qui admet $W^1(M_{0,5})$ comme sous-5-tissu. Celui-ci n'étant pas linéarisable, on peut conclure en appliquant la \PP \ref{P:nn}.
\begin{prop}
Aucun des tissus  $W^1(M_{0,m})$ n'est linéarisable. 
\end{prop}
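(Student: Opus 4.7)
Le cas $m=5$ correspond au c\'el\`ebre tissu de Bol, dont la non-lin\'earisabilit\'e est un r\'esultat classique (\cite[p.~262]{bb}); on suppose donc $m>5$ et l'on raisonne par l'absurde en supposant $W^1(M_{0,m})$ lin\'earisable. La strat\'egie consiste \`a trancher $W^1(M_{0,m})$ de mani\`ere \`a obtenir un tissu plan admettant le tissu de Bol comme sous-tissu, puis \`a invoquer la \PP \ref{P:nn} pour obtenir la contradiction voulue.

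On identifie d'abord un sous-$5$-tissu de $W^1(M_{0,m})$ provenant de Bol. Soit $\pi: M_{0,m}\to M_{0,5}$ l'application d'oubli des points marqu\'es d'indice $\geq 6$. Les cinq applications d'oubli $M_{0,5}\to M_{0,4}$, compos\'ees avec $\pi$, fournissent cinq des applications d'oubli $M_{0,m}\to M_{0,4}$; les feuilletages associ\'es forment pr\'ecis\'ement le sous-$5$-tissu $\pi^*W^1(M_{0,5})$ de $W^1(M_{0,m})$.

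On proc\`ede ensuite au tranchage. Pour chaque $j\in\{6,\ldots,m\}$, soit $\mathcal G_j$ le feuilletage de $W^1(M_{0,m})$ associ\'e \`a l'application d'oubli $M_{0,m}\to M_{0,4}$ qui ne retient que les points d'indices $1,2,3,j$. Apr\`es avoir fix\'e les trois premiers points marqu\'es \`a $0, 1, \infty$ par l'action de $PGL_2$, on obtient des coordonn\'ees affines $(x_4,\ldots,x_m)$ sur (un ouvert de) $M_{0,m}$ dans lesquelles les feuilles de $\mathcal G_j$ s'\'ecrivent $\{x_j=\mathrm{cst}\}$; leur intersection $V$ est alors un $2$-plan affine sur lequel $\pi$ se restreint en un biholomorphisme sur (un ouvert de) $M_{0,5}$. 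La trace sur $V$ du sous-tissu $\pi^*W^1(M_{0,5})$ s'identifie ainsi au tissu de Bol.

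La lin\'earisabilit\'e suppos\'ee de $W^1(M_{0,m})$ entra\^inerait alors, via la \PP \ref{P:nn}, celle de sa trace sur $V$, puis celle de tout sous-tissu de cette trace; en particulier celle du tissu de Bol, en contradiction avec le cas de base, ce qui ach\`eve la preuve. Le seul point d\'elicat est la transversalit\'e de $V$ aux fibres de $\pi$, assur\'ee ici par le choix des $\mathcal G_j$: ces feuilletages retiennent chacun un point marqu\'e diff\'erent parmi $\{6,\ldots,m\}$, de sorte que la direction tangente des fibres de $\pi$ n'est contenue dans aucun de leurs espaces tangents, et l'argument conceptuel sous-jacent est combinatoire sans obstacle s\'erieux.
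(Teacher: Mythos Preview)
Votre preuve est correcte et suit essentiellement la m\^eme d\'emarche que l'article: identifier $\pi^*W^1(M_{0,5})$ comme sous-$5$-tissu de $W^1(M_{0,m})$, puis trancher le long de $m-5$ feuilles bien choisies pour faire appara\^itre le tissu de Bol et conclure via la \PP~\ref{P:nn}. Vous \^etes simplement plus explicite que l'article sur le choix des feuilletages $\mathcal G_j$ et les coordonn\'ees; votre justification finale de la transversalit\'e de $V$ aux fibres de $\pi$ est un peu elliptique (dire que $T_\pi\not\subset T\mathcal G_j$ pour chaque $j$ ne suffit pas a priori), mais dans vos coordonn\'ees la transversalit\'e est de toute fa\c con imm\'ediate puisque $T_V=\operatorname{Vect}(\partial_{x_4},\partial_{x_5})$ et $T_\pi=\operatorname{Vect}(\partial_{x_6},\ldots,\partial_{x_m})$ sont compl\'ementaires.
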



Combiné avec les résultats récents \cite{cl} de Cavalier et Lehmann, 
ce résultat ouvre une perspective intéressante dans l'étude des {\og tissus exceptionnels\fg}. Par des calculs directs, nous avons vérifié (avec J.V. Pereira) que le tissu $W^1(M_{0,6})$ est bien de rang maximal: la dimension de l'espace de  ses relations abéliennes prend la valeur maximale possible  pour un 15-tissu  régulier (au sens de \cite{cl}) dans un espace de dimension 3, à savoir la borne $\pi'(15,3)=26 $  de Cavalier et Lehmann.  Ce tissu n'étant pas linéarisable, il n'est pas équivalent à un tissu algébrique, c'est-à-dire  un tissu associé via la dualité projective à une courbe algébrique gauche.  Suivant une terminologie classique en géométrie des tissus, on dira que $W^1(M_{0,6})$ est {\it exceptionnel}. Il serait intéressant de savoir si tous les $W^1(M_{0,m})$ le~sont~aussi et, si c'est le cas, de décrire explicitement leurs relations abéliennes.\footnote{J.V. Pereira nous informe avoir démontré que les $W^1(M_{0,m})$ sont en effet tous de rang maximal, voir \cite{jorge}.}


\subsection{Un critère de linéarisation pour certains  tissus mixtes dans $\mathbb C^3$} 
Suivant la méthode décrite dans la section \ref{S:critergeneraldeLIN} ci-dessus, nous avons obtenu un critère de linéarisation pour certains tissus mixtes en dimension 3: 
\begin{prop} Dans un espace de dimension 3, il existe une unique connexion projective $\Pi_W$  compatible avec un 6-tissu mixte $ W$  formé de trois feuilletages en hypersurfaces et de trois feuilletages en courbes. Par conséquent,  $W$ est linéarisable si et seulement si la courbure  $\mathfrak C(\Pi_W)$ de $\Pi_W$ est identiquement nulle. 
\end{prop}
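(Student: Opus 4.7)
L'approche consiste \`a imiter la strat\'egie de la preuve de la \PP \ref{P:existePI} en l'adaptant au cas mixte, suivant le sch\'ema g\'en\'eral d\'ecrit \`a la Section \ref{S:critergeneraldeLIN}. L'observation cl\'e est d'ordre num\'erique : en dimension $n=3$, l'espace des coefficients de Thomas $(\Pi_{ij}^k)_{i,j,k=1}^3$ d'une connexion projective sur $U$, soumis aux contraintes $\Pi_{ij}^k=\Pi_{ji}^k$ et $\sum_{k=1}^3 \Pi_{kj}^k=0$, est un $\mathcal{O}_U$-module libre de rang $(n-1)n(n+2)/2=15$. Parall\`element, d'apr\`es le d\'ecompte de la Section \ref{S:critergeneraldeLIN}, la compatibilit\'e de $\Pi$ avec un feuilletage de codimension $c$ impose $p(3,c)=c(3-c)(4-c)/2$ \'equations scalaires en les $\Pi_{ij}^k$, soit $p(3,1)=3$ pour chacune des trois hypersurfaces et $p(3,2)=2$ pour chacune des trois courbes. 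Le syst\`eme lin\'eaire $S(W)$ obtenu en r\'eunissant toutes ces conditions pour les six feuilletages de $W$ compte donc exactement $3\cdot 3+3\cdot 2=15$ \'equations scalaires en $15$ inconnues : il est carr\'e.

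Je fixerais ensuite un syst\`eme de coordonn\'ees $x^1,x^2,x^3$ normalisant les trois feuilletages en hypersurfaces de $W$ (par exemple $\Omega^i=dx^i-dx^3$ pour $i=1,2$ et $\Omega^3=dx^3$ comme dans la preuve de la \PP \ref{P:existePI}), les trois feuilletages en courbes \'etant alors port\'es par des champs de vecteurs $X_\ell=\partial/\partial x^1 + Z^2_\ell \,\partial/\partial x^2 + Z^3_\ell\,\partial/\partial x^3$ pour $\ell=1,2,3$, avec $Z^i_\ell\in \mathcal{O}_U$. En utilisant la \PP \ref{P:11projo} pour $m=2$ (cas hypersurfaces) et $m=1$ (cas courbes), l'\'ecriture explicite de $S(W)$ prend la forme matricielle $M_W\cdot\Sigma= V_W$, o\`u $M_W$ est une matrice $15\times 15$ \`a coefficients dans $\mathcal{O}_U$ dont les entr\'ees sont des polyn\^omes universels en les composantes des normales des hypersurfaces et des champs tangents aux courbes, et o\`u $\Sigma$ est le vecteur des 15 coefficients de Thomas inconnus. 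Pour conclure \`a l'existence et \`a l'unicit\'e de $\Pi_W$, il suffit, comme dans la preuve de la \PP \ref{P:existePI}, de montrer que le syst\`eme homog\`ene $M_W\cdot\Sigma=0$ n'a que la solution triviale, c'est-\`a-dire que $\det(M_W)$ est un \'el\'ement inversible de $\mathcal{O}_U$.

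C'est ici que se concentre la principale difficult\'e. Par analogie avec la formule explicite (\ref{E:n=3MATRICE}) du cas des tissus en hypersurfaces, on peut esp\'erer que $\det(M_W)$ se factorise (\`a constante multiplicative pr\`es) en un produit de quantit\'es invariantes associ\'ees aux feuilletages de $W$ --- produits ext\'erieurs de normales d'hypersurfaces, produits de champs tangents aux courbes, et facteurs mixtes traduisant la transversalit\'e entre les deux types de feuilletages --- toutes non-nulles en vertu de l'hypoth\`ese de position g\'en\'erale intrins\`eque \`a la notion de tissu mixte. Le calcul peut se mener soit par voie directe (d\'eveloppement symbolique, \'eventuellement assist\'e par ordinateur), soit, \`a l'image de la fin de la preuve de la \PP \ref{P:existePI}, en exploitant un argument de tranchage par les feuilles des hypersurfaces pour se ramener \`a des sous-syst\`emes associ\'es \`a des tissus en dimension inf\'erieure.

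Une fois l'existence et l'unicit\'e de $\Pi_W$ acquises, les arguments formels de la Section \ref{S:codim1} s'appliquent verbatim : par unicit\'e, $\Pi_W$ est canoniquement attach\'ee \`a $W$ ; la connexion projective plate \'etant compatible avec tout tissu mixte lin\'eaire, $W$ est lin\'earisable si et seulement si $\Pi_W$ est int\'egrable, ce qui, d'apr\`es le \TT \ref{T:caractrisationCARTANplat} et le \CC \ref{C:caractCNXIONprojoplate1}, \'equivaut \`a l'annulation identique de la courbure de Cartan $\mathfrak{C}(\Pi_W)$.
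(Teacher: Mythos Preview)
Your approach is essentially the same as the paper's: both reduce the existence and uniqueness of $\Pi_W$ to the invertibility of a $15\times 15$ linear system over $\mathcal{O}_U$, and both deduce the linearization criterion from the general theory of Section~\ref{S:connexionprojo}. The paper carries out the determinant computation directly (via \textsc{Maple}, without normalizing coordinates) and obtains the explicit factorization
\[
\Delta=-512\,\Big(\tfrac{\omega_1\wedge\omega_2\wedge\omega_3}{dx\wedge dy\wedge dz}\Big)^{\!3}
\Big(\tfrac{X_1\wedge X_2\wedge X_3}{\partial_x\wedge\partial_y\wedge\partial_z}\Big)^{\!2}
\prod_{i,j=1}^{3}\omega_i(X_j),
\]
which is exactly the kind of invariant product you anticipated; its non-vanishing is precisely the general-position hypothesis for a mixed web.

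One caveat on your alternative route: the tranchage argument from the proof of \PP~\ref{P:existePI} does not transpose to this mixed setting. Restricting to a leaf of one of the hypersurface foliations yields a $2$-dimensional slice on which the other two hypersurface foliations trace curves, but the three curve foliations (codimension~$2$ in dimension~$3$) intersect a generic hypersurface leaf in isolated points, not in foliations. There is therefore no induced lower-dimensional web to which one could apply an inductive step, and the direct computation is unavoidable here.
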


La preuve repose sur un calcul formel direct. 
Soient $\omega_1,\omega_2,\omega_3$ des  1-formes différentielles et 
$X_1,X_2,X_3$ des champs de vecteurs en trois variables $x,y,z$, 
 définissant des feuilletages non-singuliers 
en l'origine de $\mathbb C^3$. On note $W$ le 6-tissu mixte généralisé  défini par ces feuilletages (dont trois sont en hypersurfaces et trois sont des feuilletages par courbes). Nous avons vérifié  (par un calcul en MAPLE) que le déterminant du système linéaire caractérisant les connexions projectives sur $(\mathbb C^3,0)$ compatibles avec $W$ est égal  à 
\begin{equation*}
\Delta=-512\,\Big( 
\frac{\omega_1\wedge \omega_2\wedge \omega_3
}{dx\wedge dy\wedge dz}
\Big)^3
\Big( 
\frac{X_1\wedge X_2\wedge X_3
}{\partial_x\wedge \partial_y\wedge \partial_z}
\Big)^2
\prod_{ i,j=1} ^3 \omega_i(X_j) \,.
\end{equation*}

Puisque $\Delta$ ne s'annule pas si et seulement si les feuilletages qui composent $W$ sont en position générale (au sens de la définition de la Section \ref{S:notations}), cela démontre la proposition ci-dessus.

\subsection{Remarques au sujet du cas général} 
Notons encore $W_{\underline{a}}$ un tissu formé de $a_c$ feuilletages de codimension $c$ pour $c=1,\ldots,n-1$.  
Comme on l'a dit plus haut, l'existence et l'unicité d'une connexion projective (locale) compatible avec $W_{\underline{a}}$ se traduit par un problème d'algèbre linéaire avec autant d'équations que d'inconnues lorsque $\mu(\underline{a})=(n-1)n(n+2)/2$. Sous cette hypothèse, il est tentant de conjecturer que si $W_{\underline{a}}$ est un tissu mixte dont les feuilletages sont en position générale, ce problème admet une unique solution.

Des calculs effectifs nous ont montrés que cela n'est pas vrai dans tous les cas. Par exemple, en dimension 4, nous avons vérifié qu'il existe une famille continue de connexions projectives  compatibles avec un 6-tissu dans $\mathbb C^4$ formé de cinq feuilletages en hypersurfaces et d'un feuilletage de codimension 2 (cas $n=4$ et $\underline{a}=(5,1,0)$). \medskip

Une autre situation se présente: celle où il y a existence et unicité d'une  connexion projective compatible avec un tissu donné $W_{\underline{a}}$ générique, sans que la notion de {\og généricité\fg} impliquée ne soit reliée (semble-t-il) au fait que les feuilletages de $W_{\underline{a}}$ soient en position générale.  Par exemple, nous avons vérifié  qu'un 12-tissu en courbes dans $\mathbb C^4$ est compatible avec une unique connexion projective s'il est {\og suffisament générique\fg}.

\section{Quelques remarques historiques}
\label{S:histoire}
En guise de conclusion, nous voudrions revenir sur l'histoire  du problème de la linéarisation des tissus. \smallskip 

Comme cela est évoqué dans l'introduction, la question de la linéarisabilité d'un tissu s'est d'abord posée relativement à des questions en {\og Théorie des Abaques\fg} ou {\og Nomographie\fg}. C'est  Lalanne dans \cite{lalanne} qui, le premier, a pensé à simplifier la lecture des abaques en trois variables $A(x,y,z)=0$ en les rectifiant (quand c'est possible) au moyen de {\og tranformations anamorphiques\fg}, c'est-à-dire de changements de coordonnées ponctuelles de la forme $(x,y)\mapsto  
(\overline{x}(x),\overline{y}(y))$. C'est ensuite  Massau dans \cite{massau} qui a compris l'intérêt de se permettre les transformations ponctuelles $(x,y)\mapsto  (\overline{x}(x,y),\overline{y}(x,y))$ les plus générales pour essayer de rectifier une abaque en trois variables ou, en termes plus modernes, pour chercher à  linéariser un 3-tissu~plan. 
\smallskip

C'est apparemment  en relation avec des questions d'algébrisation des tissus que Blaschke et ses collaborateurs ont étudié la question de la linéarisabilité des tissus plans. La Section \S29 du livre \cite{bb} y est entièrement consacrée.  Les auteurs y montrent que, un système de coordonnées $x,y$ étant choisi, les feuilles d'un 4-tissu plan $W_4$ donné sont aussi des courbes intégrales 
d'une unique équation différentielle du second ordre 
de la forme 
\begin{equation}
\label{E:equatsecondordre}
  y''=A(y')^3+B(y')^2+Cy'+D\,,\end{equation}
forme   invariante par les changements de coordonées ponctuelles.  L'équation (\ref{E:equatsecondordre}) est donc  canoniquement attachée à $W_4$.  Les auteurs en déduisent plusieurs résultats sur les 4-tissus plans. 
En effet,  on peut lire
\begin{quote}
 {\og {\it Dann und nur dann lä{\ss}t sich ein 4-Gewebe geradlinig machen, wenn das zugehörige quasigeodätische System mit den Geraden der Ebene topologisch äquivalent ist}\fg}
\end{quote}
en bas de la page 247 (ce qui est un critère de linéarisation d'un 4-tissu plan) et juste en dessous 
\begin{quote}
  {\og {\it Eine toplogische Abbildung, die vier Geradenscharen  in vier Geradenscharen überführt, ist \\ ${}$ \; sicher projektiv}\fg}
\end{quote}
(ce qui est notre \CC \ref{C:nombrelin} dans le cas $n=2$). 

Le critère caractérisant l'équivalence d'un {\og {\it quasigeodätissche System}\fg} avec celui des droites du plan   n'appara\^it pas explicitement dans le corps du texte de la section \S29, mais  est clairement évoqué dans le cinquième des {\og Aufgaben und Lehrs\"atze zu \S29\fg} page 249. Le critère de Tresse y est juste mentionné, les auteurs renvoyant  à l'{\og Aufgabe 2\fg} de la section \S22  où les références \cite{tresse,bollin,dubourdieu,cartan} sont données.  \`A noter aussi que les géomètres allemands connaissaient l'interprétation géométrico-invariante du critère caractérisant la platitude de l'équation (\ref{E:equatsecondordre})  en termes de connexion projective ({\og projektiven Zusammenhang\fg}) comme le montre la lecture de l'{\og Aufgabe 11\fg}, page 249. 

Vu la description  des résultats de \cite{bb} faite ci-dessus, il appara\^it comme certain que les géomètres allemands qui étudiaient les tissus  vers  1930 connaissaient un critère  géométrique s'exprimant en termes d'invariants différentiels caractérisant les tissus plans linéarisables\footnote{C'est d'ailleurs en faisant référence aux résultats de la section \S29 qu'est démontré dans le second paragraphe de la section \S31 de \cite{bb} que le  5-tissu de Bol n'est pas linéarisable.}.  
  Ces résultats ont été oubliés par~la~suite, semble-t-il. \medskip

Récemment,  Hénaut a abordé la question de la linéarisation des tissus plans dans \cite{henaut}. S'appuyant sur les travaux \cite{lie, tresse} de Lie et Tresse, il obtient un critère caractérisant les tissus plans linéarisables qui est essentiellement le même que celui obtenu par les géomètres allemands. Hénaut a  travaillé indépendamment, sans connaître leurs résultats: s'il cite le livre \cite{bb}, c'est pour dire que seuls des cas particuliers y ont été traités ({\it cf.} \cite{henaut} page 531). 
\medskip

Dernièrement en 2004, Akivis, Goldberg et Lychagin reviennent sur la linéarisabilité des tissus plans dans \cite{akgoly}. Insistant sur le fait que les résultats de Hénaut énoncés dans \cite{henaut} ne sont pas formulés en des termes invariants, il reprennent une approche proposée en 1973  par Akivis  et obtiennent une caractérisation des tissus plans linéarisables qui s'exprime par l'annulation de certains invariants différentiels explicites.  Ils disent  résoudre une conjecture  posée par   Blaschke 
à la fin de la section \S42 de \cite{blaschke}, où l'on peut lire
\begin{quote}
 {\og {\it Es scheint also möglich zu sein, eine beliebige Wabe in der Umgebung dritter Ordnung eines Punktes durch eine geradlinige Wabe zu ersetzen, während eine Annäherung in vierter Ordnung im allgemeinen nicht mehr möglich ist. Für die {\og Streckbarkeit\fg} einer $W_4$ haben wir also zwei Bedingungen vierter Ordnung zu erwarten.} \fg}
\end{quote}

Bien que cela ne soit pas complètement explicite,  Blaschke semble bien conjecturer à cet endroit quelle doit être la forme que l'on peut attendre d'un critère caractérisant les 4-tissus plans linéarisables. Cela est surprenant, puisque, comme on l'a dit plus haut, un tel critère était déja présenté dans \cite{bb},  livre  datant de 1938 dont il est l'un des deux auteurs. \medskip

Enfin, alors que nous terminions la préparation de cet article, Goldberg et Lychagin ont rendu disponible la prépublication \cite{goly2} où ils   abordent le problème de la linéarisation des tissus de codimension 1 en dimension arbitraire.





\bibliographystyle{cdraifplain}

\end{document}